\definecolor{forestgreen}{rgb}{0.1333,0.5451,0.1333}
\definecolor{navyblue}{rgb}{0,0,0.5}
\definecolor{darkgreen}{rgb}{0,0.3922,0}
\let\reftagform@=\tagform@
\def\tagform@#1{\maketag@@@{(\ignorespaces\textcolor{black}{#1}\unskip\@@italiccorr)}}
\renewcommand{\eqref}[1]{\textup{\reftagform@{\ref{#1}}}}
\def\lessim{\ \lower4pt\hbox{$
		\buildrel{\displaystyle <}\over\sim$}\ }
\def\gessim{\ \lower4pt\hbox{$\buildrel{\displaystyle >}
		\over\sim$}\ }
\def\si{\sigma}
\def\eps{{\varepsilon}}
\newcommand{\indi}{\ensuremath{\mathbbm{1}}}
\newcommand{\bt}{\boldsymbol{t}}
\newcommand{\pref}{\prettyref}
\newtheorem{lemma}{\bf Lemma}[section]
\newtheorem{theorem}[lemma]{\bf Theorem}
\newtheorem{proposition}[lemma]{\bf Proposition}
\theoremstyle{remark}
\newtheorem{remark}{Remark}[section]
\numberwithin{equation}{section}
\newcommand{\8}{\infty}
\newcommand{\px}{\mathcal{P}}
\newcommand{\rz}{\mathbb{R}}
\newcommand{\ez}{\mathbb{E}}
\newcommand{\pz}{\mathbb{P}}
\newcommand{\sfG}{\mathsf G}
\newcommand{\sfF}{\mathsf F}
\newcommand{\sfH}{\mathsf H}
\newcommand{\sfI}{\mathsf I}
\newcommand{\sfL}{\mathsf L}
\newcommand{\sfJ}{\mathsf J}
\newcommand{\sfK}{\mathsf K}
\newcommand{\sfT}{\mathsf T}
\newcommand{\sfa}{\mathsf a}
\newcommand{\sfb}{\mathsf b}
\newcommand{\al}{\alpha}
\newcommand{\de}{\delta}
\renewcommand{\si}{\sigma}
\newcommand{\la}{\lambda}
\renewcommand{\bt}{\beta}
\newcommand{\Crt}{\mathrm{Crt}}
\newcommand{\GOE}{\mathrm{GOE}}
\newcommand{\dd}{\mathrm{d}}
\begin{document}
\title{Hessian spectrum at the global minimum and topology trivialization of locally isotropic Gaussian random fields}
\author{Hao Xu\thanks{Department of Mathematics, University of Macau, yc17446@connect.um.edu.mo.} \and Qiang Zeng\thanks{Academy of Mathematics and Systems Science, Chinese Academy of Sciences, qzeng.math@gmail.com.}}
\date{October 13, 2024}
\maketitle
\begin{abstract}
We study the energy landscape near the ground state of a model of a single particle in a random potential with trivial topology. More precisely, we find the large dimensional limit of the Hessian spectrum at the global minimum of the Hamiltonian $X_N(x) +\frac\mu2 \|x\|^2, x\in\rz^N,$ when $\mu$ is above the phase transition threshold so that the system has only one critical point. Here $X_N$ is a locally isotropic Gaussian random field. The same idea is also applied to study the more general model of elastic manifold. In the replica symmetric regime, our results confirm the predictions of Fyodorov and Le Doussal made in 2018 and 2020 using the replica method.
\end{abstract}

\section{Introduction}
In 1941, Kolmogorov introduced locally isotropic fields \cite{Ko41} for the application in statistical theory of turbulence. Since then, this model has been applied to study various problems in statistical physics. In particular, for disordered systems, Mezard--Parisi \cite{MP91} and Engel \cite{En93} employed locally isotropic fields to model a classical particle confined to an impenetrable spherical box or to describe elastic manifolds propagating in a random potential. In particular, this model can be understood as the zero dimensional case of elastic manifold. For more background of this model, we refer the interested readers to the papers \cite{Ya57,Fy04, FS07, BD07, Kli12, AZ20} among many others. On the other hand, since its discovery, Parisi's replica trick has been applied to study plenty of models in physics and combinatorial optimization, and led to fascinating predictions and conjectures, while the replica method itself remains mysterious in mathematics. There has been lasting interest on investigating predictions made by the replica method for various models; see \cite{MPV86,Par23}. Here we continue this line of research and study the locally isotropic Gaussian random fields and, more generally, the elastic manifold.

The first model is defined as follows. Let $B_{N} \subset \mathbb{R}^{N}$ be a sequence of subsets and let $H_{N}: B_{N} \subset$ $\mathbb{R}^{N} \rightarrow \mathbb{R}$ be given by
\begin{align}
\label{eq:m}
H_{N}(x)=X_{N}(x)+\frac{\mu}{2}\|x\|^{2},
\end{align}
where $\mu >0,\|x\|$ is the Euclidean norm of $x$, and $X_{N}$ is a locally isotropic Gaussian random field, namely, a Gaussian process satisfying
$$
\mathbb{E}\left[\left(X_{N}(x)-X_{N}(y)\right)^{2}\right]=N D\left(\frac{1}{N}\|x-y\|^{2}\right), \quad x, y \in \mathbb{R}^{N} .
$$
Due to this defining property, $X_N$ is also known as a Gaussian random field with isotropic increments.
Here the function $D: \mathbb{R}_{+} \rightarrow \mathbb{R}_{+}$ is called the structure function and $\mathbb{R}_{+}=[0, \infty)$. It determines the law of $X_{N}$ up to an additive shift by a Gaussian random variable.
Complete characterization of all structure functions of locally isotropic Gaussian fields was given by Yaglom \cite{Ya57} (see also early work of Schoenberg in the context of embedding metric spaces into Hilbert spaces \cite{Sch38}). Following \cite[Section 25.3]{Ya87}, if $D$ is a structure function for all $N \in \mathbb{N}$, then $X_{N}$ must belong to one of the following two classes:

\begin{enumerate}
    \item \textbf{Isotropic fields.} There exists a function $B:\rz_+\to \rz$ such that
        \begin{align*}
          \ez[X_N(x)X_N(y)]=N B\Big(\frac1N\|x-y\|^2\Big),
        \end{align*}
        where $B$ has the representation
        \begin{align}\label{funB}
        B(r) = c_0+\int_{(0,\8)} e^{-r t^2} \nu(\dd t),
        \end{align}
        and $c_0\in \rz_+$ is a constant and $\nu$ is a finite measure on $(0,\8)$. In this case, $B$ is called a correlator function and
        \begin{align*}
          D(r)=2(B(0)-B(r)).
        \end{align*}
    \item \textbf{Non-isotropic fields with isotropic increments.} The structure function $D$ can be written as
        \begin{align}\label{eq:drep}
          D(r) = \int_{(0,\8)} (1-e^{-rt^2})\nu(\dd t) + Ar, \ \ r\in \rz_+,
        \end{align}
        where $A\in \rz_+$ is a constant and $\nu$ is a $\si$-finite (but not finite) measure satisfying
        \begin{align*}
          \int_{(0,\8)} \frac{t^2}{1+t^2}\nu(\dd t) <\8.
        \end{align*}
  \end{enumerate}
In the physics literature,  Case 1 is also known as short-range correlation (SRC)  fields and Case 2 as long-range correlation (LRC) fields.

We also consider the more general model of elastic manifold. Following \cite{fyodorov2020manifolds,FLD20,BABM21}, we denote by $\Omega$ the discrete lattice $\llbracket 1, L \rrbracket^d \subset \mathbb{Z}^d$, where the positive integers $L$ and $d$ are called the length and the internal dimension, respectively.
We parameterize the manifold by an $N$-component field $\mathbf{u}(x) \in \mathbb{R}^N$ for $x\in\Omega$. Let $X_{N}$ be a Gaussian random field on $\mathbb{R}^N \times \Omega$ satisfying  
$$
\mathbb{E}\left[X_N\left(y_1, x_1\right) X_N\left(y_2, x_2\right)\right]=N B\left(\frac{\left\|y_1-y_2\right\|^2}{N}\right) \delta_{x_1, x_2},
$$
where $B$ is the correlator function defined in (\ref{funB}) and $\de_{x,y}=\de_{xy}$ is the Kronecker delta function.
The Hamiltonian of elastic manifold is
\begin{equation}\label{modEM}
    \mathcal{H}(\mathbf{u})=\sum_{x, y \in \Omega}\left(\mu_0 I_{L^d}-t_0 \Delta\right)_{x y}\langle\mathbf{u}(x), \mathbf{u}(y)\rangle+\sum_{x \in \Omega} X_N(\mathbf{u}(x), x),
\end{equation}
for any deterministic function $
\mathbf{u}: \Omega \rightarrow \mathbb{R}^N$.
In the above definition, $\Delta \in \mathbb{R}^{L^d \times L^d}$ is the lattice Laplacian on $\Omega$ and its $(x, y)$ entry is given by
$\Delta_{x y}=\delta_{x \sim y}-2 d \delta_{xy}$, 
where $x \sim y$ means that $x$ and $y$ are lattice neighbors. The positive numbers $\mu_0$ and $t_0$ are called mass and interaction strength in this model, respectively. In this context, the model \pref{eq:m} is the $d=0$ case of elastic manifold.

Our study is motivated by the physics literature \cite{FLD18,fyodorov2020manifolds}. In these remarkable papers, Fyodorov and Le Doussal considered the energy landscape of the models \pref{eq:m} and \eqref{modEM} near their global minimum. They investigated the behavior of the lower (or left) edge of Hessian spectrum at the global minimum using the replica method, and discovered phase transitions in the thermodynamic limit according to different levels of replica symmetry breaking. For the model \pref{eq:m}, they showed that the limiting Hessian spectrum at the global minimum is the semicircle law with radius $2\sqrt{B''(0)}$ and center
\begin{align*}
\mu_{\mathrm{eff}}=\mu+\frac{B^{\prime \prime}(0)}{\mu}+v\left(B^{\prime}(\mathcal{Q})-B^{\prime}(0)-\mathcal{Q} B^{\prime \prime}(0)\right),
\end{align*}
where the values of parameters $v>0, \mathcal{Q} \ge 0$ can be solved by the following two equations
\begin{align*}
&\frac{\mu^{2} \mathcal{Q}}{1-\mu v \mathcal{Q}}=B^{\prime}(\mathcal{Q})-B^{\prime}(0), \\
&\frac{1}{v} \log (1-\mu v \mathcal{Q})=-\mu \mathcal{Q}+v\left[B(\mathcal{Q})-B(0)-\mathcal{Q} B^{\prime}(\mathcal{Q})\right].
\end{align*}
Here one should interpret the result using the relation $D'(r)=-2B'(r)$ for LRC fields. In the replica symmetric regime where $\mathcal{Q}=0$,  the lower edge of the semicircle law is given by
\begin{align*}
    \la_{-}^{\rm SRC} &= \Big(\sqrt{\mu}-\sqrt{\frac{B''}\mu}\Big)^2,\\
    \la_{-}^{\rm LRC} &= \Big(\sqrt{\mu}-\sqrt{\frac{-D''(0)}{2\mu}}\Big)^2;
\end{align*}
see the discussion after Equation (9) in \cite{FLD18}. Note that the scaling in the above formulas is inherited from \cite[Equation (2)]{FLD18} which is different from that in \eqref{eq:m}. With our scaling, we would have a factor 4 in front of $B''$ and $D''$.
Their formulas for the model \eqref{modEM} are more involved; but in the  replica symmetric regime the lower edge of limiting Hessian spectrum has a simple form. Indeed, writing $\hat\mu_A$ for the empirical spectral measure of the matrix $A$ and $I_N$ for the $N\times N$ identity matrix, we define 
\begin{align}\label{ustar}
    u^*= 4B''(0)\int \frac{\hat{\mu}_{-t_0 \Delta+\mu_0I_{L^d}}(\mathrm{d} \lambda)}{\lambda}.
\end{align}
Denoting by $m(z)$ the Stieltjes transform of limiting Hessian spectrum at the global minimum, Fyodorov and Le Doussal predicted the following self-consistency equation \cite[Equations (45) and (107)]{fyodorov2020manifolds} in the replica symmetric regime
\begin{equation}\label{Stie}
m(z)=\int \frac{\hat{\mu}_{-t_0 \Delta+\mu_0 I_{L^d}}(\mathrm{d} s)}{s+u^*-z-4B''(0)m(z)}.
\end{equation}
Note that $ip$ in \cite[Equation (45)] {fyodorov2020manifolds} should be understood as the Cauchy transform which has opposite sign with $m(z)$, and the parameter $\mu_{\mathrm{eff}}$ is given in \cite[Equation (107)] {fyodorov2020manifolds}.
It was shown in {\cite[Theorem 2.4]{BABM21}} that the following equation for $\mu_c=\mu_c\left(t_0, 4B''(0), L, d\right)$ has a unique positive solution, which is called the Larkin mass:
\begin{align}\label{larkin}
    \int_{\mathbb{R}} \frac{\hat{\mu}_{-t_0 \Delta}(\mathrm{d} \lambda)}{\left(\mu_c+\lambda\right)^2}=\frac{1}{4B''(0)}.
\end{align}
As a consequence, the lower edge of the limiting Hessian spectrum is given by \cite[Equation (20)]{fyodorov2020manifolds}
\begin{align}\label{eq:emledge}
    \ell=\mu_0-\mu_c+4B''(0)\int \frac{\hat{\mu}_{-t_0 \Delta}(\mathrm{d} s)}{s+\mu_0}-4B''(0)\int \frac{\hat{\mu}_{-t_0 \Delta}(\mathrm{d} s)}{s+\mu_c}.
\end{align}
In short, the goal of this paper is to prove all the above replica symmetric predictions rigorously.

To state our results precisely, we make the following hypothesis throughout the paper so that the random fields are twice differentiable; see e.g.~\cite{AT07}.

\textbf{Assumption I} (Smoothness). The function $D$ is four times differentiable at 0, and it satisfies
$$
0<\left|D^{(4)}(0)\right|<\infty .
$$

Given a (random) smooth function $f$, we write $\nabla f(x)$ and $\nabla^2 f(x)$ for its gradient and Hessian at $x$ in its domain, respectively.
Let $\mathcal{P}(\mathbb{R})$ denote the space of probability measures on $\rz$.
Recall the bounded Lipschitz metric between two probability measures $\mu, \nu\in \px(\rz)$ is given by
\begin{align}\label{eq:measd}
d(\mu,\nu)=\sup\Big\{\Big|\int f \dd \mu-\int f \dd\nu \Big|: \|f\|_\8\le1, \|f\|_L\le 1\Big\},
\end{align}
where $\|f\|_\8$ and $\|f\|_L$ denote the $L^\8$ norm and Lipschitz constant of the function $f$, respectively.  Let $B(\nu, \delta)$ denote the open ball in $\mathcal{P}(\mathbb{R})$ with center $\nu$ and radius $\delta$ with respect to the metric $d$ given by (\ref{eq:measd}). Let $\si_{c,r}$ denote the semicircle measure with center $c$ and radius $r$, i.e.,
\begin{align*}
    \si_{c,r}(\dd x) = \frac{2}{\pi r^2}\sqrt{r^2-(x-c)^2} \dd x.
\end{align*}
In particular, we write $\si_{\rm sc} = \si_{0,\sqrt2}$. The empirical spectral measure of an $N\times N$ matrix $M$ is denoted by $\hat\mu_M=L_{N}(M)=\frac{1}{N} \sum_{i=1}^{N} \delta_{\lambda_{i}(M)}$, where $\la_i(M)$ are the eigenvalues of $M$. Let $\lambda_{\min}(M)$ and $\lambda_{\max}(M)$ denote the smallest and largest eigenvalue of $M$, respectively.  We use the shorthand notations $B=B(0), B'=B'(0), B''=B''(0)$ in what follows.
\begin{theorem}[SRC fields]
    \label{h-3}
    Assume Assumption I. If $\mu>\sqrt{4 B^{\prime \prime}}$, then
    \begin{align}
    \label{xx}
    \lim _{N \rightarrow \infty} \mathbb{P}\left(\text {The only critical point of } H_{N}(x) \text{ is global minimum} \right)=1.
    \end{align}
    Let $x^{*}$ denote the global minimum of $H_{N}(x)$. Then we have convergence in probability
    \begin{align}
    &\label{hh3}\lim _{N \rightarrow \infty} \frac{1}{N} H_{N}\left(x^{*}\right) =\frac{B^{\prime}}{\mu},\\
    &\label{q10}\lim _{N \rightarrow \infty} \frac{\|x^{*}\|}{\sqrt{N}} =\frac{\sqrt{-2 B^{\prime}}}{\mu},\\
    &\label{qq10}\lim_{N\to \8} d(L_N(\nabla^2 H_N(x^*)), \si_{c_s,r_s}) =0,\\
    &\label{xx8}\lim _{N \rightarrow \infty}  \lambda_{\min }\left(\nabla^{2} H_{N}\left(x^{*}\right)\right)=c_s-r_s,
    \end{align}
    where $c_s=\mu+\frac{4B''}{\mu}$ and $r_s=4\sqrt{B''}$.
\end{theorem}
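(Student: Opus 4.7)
The plan is to combine the Kac--Rice formula with a Laplace asymptotic analysis, exploiting the topology trivialization that follows from Fyodorov's $\ez\,\Crt_N\to 1$. Since $H_N$ is coercive in $x$, the global minimum exists almost surely and is a critical point, so $\Crt_N(\rz,\rz^N)\geq 1$ a.s.; combined with $\ez\,\Crt_N\to 1$ for $\mu>\sqrt{4B''}$, Markov's inequality gives $\p(\Crt_N\geq 2)\leq \ez\,\Crt_N-1\to 0$, and (\ref{xx}) follows. To prove (\ref{hh3})--(\ref{xx8}), I will show, for every $\epsilon>0$, that the Kac--Rice expectation $\ez[\Crt_N(\rz^N,E_\epsilon)]\to 0$, where $E_\epsilon$ is the event that at least one of (\ref{hh3})--(\ref{xx8}) fails by $\epsilon$; combined with (\ref{xx}) this forces the unique critical point to satisfy all four statements simultaneously with probability tending to $1$.

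Two structural facts about isotropic fields enable the Laplace analysis. First, $\nabla H_N(x)$ is independent of the pair $(X_N(x),\nabla^2 H_N(x))$ at each fixed $x$ and has distribution $N(\mu x,-2B'\,I)$, so $\varphi_{\nabla H_N(x)}(0)=(2\pi(-2B'))^{-N/2}\exp(\mu^2\|x\|^2/(4B'))$ is spherically symmetric. Second, the Hessian admits the distributional representation
\begin{equation*}
\nabla^2 H_N(x)\stackrel{d}{=}\mu I+2\sqrt{B''/N}\,(G_N+Z\,I),
\end{equation*}
where $G_N$ is an unnormalized GOE matrix (diagonal variance $2$, off-diagonal $1$) and $Z\sim N(0,1)$ is independent of $G_N$. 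Passing to spherical coordinates $t=\|x\|/\sqrt{N}$, the Kac--Rice integral reduces to a one-dimensional integral of the form $\exp(N\Phi(t))$, and solving $\Phi'(t)=0$ yields $t=t^{\ast}:=\sqrt{-2B'}/\mu$, which delivers (\ref{q10}). To identify the Hessian spectrum, one performs a further $N$-scale saddle point in $Z=\sqrt{N}\,z_{\ast}$: the combination of the $|\det|$-tilt and the Gaussian weight on $Z$ produces the optimization $\max_{z_{\ast}}\{-z_{\ast}^2/2+\int\log|a+2\sqrt{B''}s|\,\si_{0,2}(ds)\}$ with $a=\mu+2\sqrt{B''}\,z_{\ast}$, whose unique solution $a=c_s=\mu+4B''/\mu$ one verifies via the Stieltjes transform of the standard semicircle. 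Crucially, the $|\det|$ tilt contributes a potential of order $N$ whereas the GOE log-potential is of order $N^2$, so the empirical spectrum of $G_N/\sqrt{N}$ keeps its semicircle limit on $[-2,2]$; combined with the shift by $a$ this yields (\ref{qq10}) with $r_s=4\sqrt{B''}$. Then (\ref{hh3}) follows from the decomposition $H_N(x^{\ast})/N=X_N(x^{\ast})/N+\mu\|x^{\ast}\|^2/(2N)$: the norm term contributes $-B'/\mu$ by (\ref{q10}), and the field term contributes $2B'/\mu$ as the conditional Gaussian mean of $X_N(x^{\ast})$ given that $\mathrm{Tr}\,\nabla^2 H_N(x^{\ast})$ is at its saddle value $Nc_s$, using $\ez[X_N(x)\partial_{ii}X_N(x)]=2B'$ and $\Var(\mathrm{Tr}\,\nabla^2 X_N)\sim 4B''N$.

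For (\ref{xx8}), the upper bound $\lambda_{\min}\leq c_s-r_s+\epsilon$ follows from (\ref{qq10}) together with the positive mass of $\si_{c_s,r_s}$ near its left edge. The matching lower bound $\lambda_{\min}\geq c_s-r_s-\epsilon$ is delivered by the Kac--Rice estimate $\ez[\Crt_N(\rz^N,\{\lambda_{\min}<c_s-r_s-\epsilon\})]\to 0$, which in turn rests on a GOE small-eigenvalue large-deviation bound that survives the $|\det|$-tilt. The main obstacle is making this last estimate quantitative and sharp enough: because $\nabla^2 H_N$ can have negative eigenvalues in the regime $\sqrt{4B''}<\mu<4\sqrt{B''}$, the absolute value in $|\det|$ must be handled carefully, and the interplay between the order-$N^2$ log-gas and the order-$N$ tilt must be controlled with sub-exponential error so that the Laplace factor and the tilt concentrate at precisely the predicted $z_{\ast}$ and produce the correct edge value $c_s-r_s$.
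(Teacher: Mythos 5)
Your overall architecture — coercivity gives at least one critical point, $\ez\,\Crt_N\to 1$ gives $\pz(\Crt_N\geq 2)\to 0$, and Kac--Rice localization then pins down the unique critical point — is exactly the paper's strategy, and your argument for \eqref{xx} and the spherical Laplace computation behind \eqref{q10} are essentially the paper's Propositions \ref{z-1}, \ref{p-2}, \ref{x-10}, Lemma \ref{x-11}. However, two of your steps contain genuine gaps.

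First, your argument for \eqref{hh3} is not a proof. Once you have \eqref{q10} and the concentration of the Hessian shift, you try to recover $H_N(x^*)/N$ by computing a ``conditional Gaussian mean of $X_N(x^*)$ given that $\mathrm{Tr}\,\nabla^2 H_N(x^*)$ is at its saddle value.'' This conflates conditioning at a fixed deterministic $x$ with evaluation at the random argmin $x^*$, and there is no theorem licensing that substitution. The paper avoids the issue by putting $u=H_N(x)/N$ \emph{directly} into the Kac--Rice integrand and the Laplace variational problem: the function $\psi(\rho,u,y)$ in \eqref{eq:psidef} is optimized jointly over $(\rho,u,y)$, Lemma \ref{x-11} identifies the unique maximizer $(\rho_*,u_*,y_*)$, and Lemma \ref{x-12} shows $\ez\,\Crt_N(E,(R_1,R_2))\to 0$ whenever $(\rho_*,u_*)\notin \overline{(R_1,R_2)}\times\bar E$. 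Markov's inequality then gives both \eqref{hh3} and \eqref{q10} in one stroke — $u_*$ is an output of the optimization, not something to be reverse-engineered from the Hessian.

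Second, for the lower bound in \eqref{xx8} you write that the Kac--Rice estimate ``rests on a GOE small-eigenvalue large-deviation bound that survives the $|\det|$-tilt'' and note that making this quantitative is ``the main obstacle,'' but you do not resolve it. This is the crux of the whole result. The LDP for $\lambda_{\min}(\GOE_N)$ has speed $N$, while $|\det(\GOE_N+tI_N)|$ fluctuates on the exponential scale $e^{O(N)}$; Cauchy--Schwarz alone gives $\ez[|\det|\,\indi_A]\le(\ez|\det|^2)^{1/2}\pz(A)^{1/2}$, which is useless unless $\ez[|\det|^2]$ is at most $(\ez|\det|)^2$ up to an $e^{o(N)}$ factor. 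The missing ingredient is precisely Lemma \ref{x-13} (taken from Belius--\v Cern\'y--Nakajima--Schmidt), asserting that $\ez[|\det(xI_N+\GOE_N)|^2]\le e^{\delta N}(\ez|\det(xI_N+\GOE_N)|)^2$ for $\sqrt 2+\delta\le x\le\delta^{-1}$; after localizing the shift variable near the Laplace optimizer $y_*$ (so that $|y_*|>\sqrt 2$ since $\mu>\sqrt{4B''}$), this bound lets the speed-$N$ LDP for $\lambda_{\min}$ win. As a side remark, your worry about ``negative eigenvalues in the regime $\sqrt{4B''}<\mu<4\sqrt{B''}$'' is slightly off: $c_s-r_s=(\sqrt\mu-\sqrt{4B''/\mu})^2>0$ for all $\mu>\sqrt{4B''}$, so the limit spectrum at the minimum is always positive; the only negativity encountered is deep in the tails of the Laplace integral, which the localization already kills.

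Your argument for \eqref{qq10} (the tilt is order $N$ while the log-gas potential is order $N^2$) is the correct heuristic but is compressed; the paper's proof splits the shift variable into a thin window $[y_*-\eps'/2,y_*+\eps'/2]$, where the speed-$N^2$ LDP \eqref{ac2} plus Cauchy--Schwarz yields superexponential decay, and its complement, handled by the strict gap at the Laplace maximizer. You would need some such decomposition to make the sketch rigorous.
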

Here \eqref{xx} is essentially known from \cite{Fy04,Fy15} before. As mentioned earlier, the different coefficients of $B''$ compared with the predictions in \cite{FLD18} are due to the different scaling in the definition \pref{eq:m} here and in what follows. For LRC fields, we also need the following extra assumption, which is natural for stochastic processes with stationary increments. 

\textbf{Assumption II} (Pinning). We have
$
X_{N}(0)=0.
$

The following is our main results for LRC fields.
\begin{theorem}[LRC fields]
    \label{h-2}
    Assume Assumptions I and II. If $\mu>\sqrt{-2 D^{\prime \prime}(0)}$, then
    \begin{align*}
    \lim _{N \rightarrow \infty} \mathbb{P}\left(\text {The only critical point of } H_{N}(x) \text{ is global minimum} \right)=1.
    \end{align*}
    Let $x^{*}$ be the global minimum of $H_{N}(x)$. Then we have convergence in probability
    \begin{align}
    &\lim _{N \rightarrow \infty} \frac{1}{N} H_{N}\left(x^{*}\right) =\frac{D^{\prime}\left(\frac{D'(0)}{\mu^2}\right)-D^{\prime}(0)}{\mu}+\frac{D'(0)}{2\mu}-\frac{ D^{\prime}\left(\frac{D'(0)}{\mu^2}\right)}{\mu},\label{eq:energy}\\
    &\lim _{N \rightarrow \infty} \frac{\|x^{*}\|}{\sqrt{N}} =\frac{\sqrt{D^{\prime}(0)}}{\mu},\label{g1}\\
    &\lim_{N\to\8} d(L_N(\nabla^2 H_N(x^*)), \si_{c_l,r_l})= 0,\label{gg1}\\
    &\lim _{N \rightarrow \infty}  \lambda_{\min }\left(\nabla^{2} H_{N}\left(x^{*}\right)\right)=c_l-r_l,\label{zx}
    \end{align}
    where $c_l=\mu+\frac{-2D''(0)}{\mu}$ and $r_l=\sqrt{-8D''(0)}$.

\end{theorem}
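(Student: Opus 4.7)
The plan is to parallel the proof of Theorem~\ref{h-3} (SRC case), adapting the Kac--Rice framework for locally isotropic fields of long-range type developed in \cite{AZ20}. The argument will proceed in four stages: topology trivialization, localization of the minimum, bulk convergence of the Hessian spectrum, and edge convergence of $\la_{\min}$.

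First, I would establish $\ez[\Crt_N(\rz,\rz^N)]\to 1$ in the regime $\mu>\sqrt{-2D''(0)}$. By the Kac--Rice formula under Assumptions I--III, this expectation can be written as the integral over $\rz^N$ of the joint density of $\nabla H_N(x)$ at $0$ times the conditional mean of $|\det\nabla^2 H_N(x)|$, and by isotropy of increments together with the pinning at $0$ it collapses to a one-dimensional integral in $r=\|x\|^2/N$, whose supercritical asymptotics computed in \cite{AZ20} equal $1+o(1)$. Since $H_N$ is coercive and smooth, the global minimum $x^*$ exists almost surely, so Markov's inequality forces $x^*$ to be the unique critical point with probability tending to $1$. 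To upgrade this to \eqref{g1} and \eqref{eq:energy}, I would refine the same computation by inserting the indicators $\indi\{|\|x\|^2/N - D'(0)/\mu^2|>\de\}$ and $\indi\{|H_N(x)/N - e_0|>\de\}$ with $e_0$ the predicted energy; the same Laplace analysis shows these restricted mean counts are $o(1)$, so the unique critical point localizes at the predicted radius and energy, the latter following from the Gaussian conditional mean of $X_N(x^*)/N$.

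For the bulk convergence \eqref{gg1}, I would condition on $\nabla H_N(x^*)=0$ together with $\|x^*\|^2/N = D'(0)/\mu^2 + o(1)$ obtained above. A direct covariance computation shows that the conditional law of $\nabla^2 X_N(x^*)$ is a GOE-type matrix with off-diagonal entry variance of order $-2D''(0)/N$ plus a deterministic rank-$O(1)$ shift coming from the linear constraints and the scalar value $X_N(x^*)$. Wigner's theorem then yields convergence of $L_N(\nabla^2 H_N(x^*))=L_N(\mu I + \nabla^2 X_N(x^*))$ to $\si_{c_l,r_l}$, where $c_l = \mu - 2D''(0)/\mu$ absorbs both $\mu$ and the bulk part of the conditional mean shift, and $r_l=\sqrt{-8D''(0)}$ comes from the off-diagonal variance.

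The main obstacle, in my view, is the edge convergence \eqref{zx}. The upper bound $\limsup\la_{\min}\le c_l-r_l$ follows directly from the bulk convergence \eqref{gg1}, since the semicircle places positive mass arbitrarily close to the left edge. The matching lower bound $\liminf\la_{\min}\ge c_l-r_l$ requires ruling out outlier eigenvalues below $c_l-r_l$ despite the finite-rank conditional-mean shift: this is a BBP-type phenomenon, and the fact that $c_l-r_l>0$ precisely when $\mu>\sqrt{-2D''(0)}$ (the trivialization threshold) strongly suggests the shift is subcritical. I would handle this by explicitly diagonalizing the mean shift after rotating into the direction of $x^*$, verifying that its operator norm stays below the semicircle edge, and then invoking edge rigidity for small-rank deformations of GOE. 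A complementary route is a contradiction argument: if outlier eigenvalues below $c_l-r_l-\eps$ occurred with positive probability, a Kac--Rice computation restricted to critical points with $\la_{\min}<c_l-r_l-\eps$ would produce extra critical points, contradicting the uniqueness established in the first step.
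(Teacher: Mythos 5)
Your outline is right in spirit for the first two stages (trivialization via $\ez[\Crt_N]\to 1$ plus Markov, localization of radius and energy via restricted Kac--Rice counts vanishing), and matches the paper closely there. But there are genuine gaps in stages three and four.

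For the bulk convergence \eqref{gg1}, you propose to condition on $\nabla H_N(x^*)=0$ and $\|x^*\|^2/N\approx D'(0)/\mu^2$ and then apply Wigner's theorem. This is not a rigorous step: $x^*$ is random, so ``conditioning on $\nabla H_N(x^*)=0$'' is not the Gaussian conditioning that appears in Kac--Rice, which is done at a deterministic point. The paper instead inserts the indicator $\indi_{\{L_N(\nabla^2 H_N(x))\notin B(\si_{c_l,r_l},\de)\}}$ into the Kac--Rice integral and uses Markov's inequality, then splits the integral according to whether $z_3'$ lies near the maximizer $y_*$ or not; on the far event the complexity supremum is strictly subcritical, while on the near event the indicator forces $L_{N-1}(\GOE_{N-1})$ away from the semicircle, whose probability is $e^{-cN^2}$ by the LDP. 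This requires an interlacing argument to pass from $L_N(G)$ to $L_{N-1}(G_{**})$ and a Schur-complement decomposition of $\det G$; neither step is hinted at in your plan, and the conditional Hessian $G$ in \eqref{eq:gu} is not merely ``GOE plus a deterministic rank-$O(1)$ shift'' --- the defective $(1,1)$ entry $z_1'$ is itself a (correlated) Gaussian, and the border $\xi$ is a genuine additional random perturbation.

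For the edge convergence \eqref{zx}, your plan has the right intuition (BBP subcriticality) but hand-waves the essential technical content. Invoking ``edge rigidity for small-rank deformations of GOE'' abstractly is not enough: what is needed, and is supplied by Lemma \ref{51}, is a \emph{quantitative, uniform-in-$(\rho,u,y)$} exponential tail bound $\pz(\la_{\min}(G(\rho,u,y))\le c_l-r_l-\eps)\le e^{-\de_0 N}$ on a small neighborhood of the maximizer, because this probability must be fed into a Kac--Rice integral and beaten against $(\ez|\det G|^2)^{1/2}$. The paper proves this via a Dumitriu--Edelman-style tridiagonal representation and explicit $\chi$-square and Gaussian tail bounds, which also handles the randomness of the defective entry via the tail estimate \eqref{eq:z1tail}; this is not a black-box consequence of BBP. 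Your ``complementary contradiction argument'' via Kac--Rice is actually closer to what the paper does, but it is not a contradiction with uniqueness --- it is the same restricted-first-moment-plus-Markov argument as for the bulk --- and to run the Cauchy--Schwarz step you also need a second-moment-vs-first-moment-squared comparison for $\det G$ (Lemma \ref{le:det2}), the LRC analogue of \cite{Be22}*{Lemma 6.4}, which you do not mention. Also note the upper bound $\limsup\la_{\min}\le c_l-r_l$ does not quite follow from \eqref{gg1} alone in the direction you state; the paper handles $\Theta_\de^1$ (the ``too large'' side) separately via interlacement, the LDP \eqref{ac2}, and the lemmas from \cite{AZ20}.
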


The ideas of establishing the above results can be applied to the study of more realistic elastic manifold model. For the translation $\tau_u(x)=x+u$ on $\rz$, we denote by $\tau_u(\mu)$ the pushforward of a probability measure $\mu$ under $\tau_u$. The Pastur relation \cite{Pa72} reveals that the limiting measure in the following result exactly matches the Stieltjes transform in the prediction \eqref{Stie}.

\begin{theorem}[Elastic manifold]
    \label{Elas}
        Assume Assumption I.
       The global minimum of $\mathcal{H}(\mathbf{u})$ exists and is unique which we denote by $\mathbf{u}^{*}$.  If $\mu_0 >\mu_c\left(t_0, 4B''(0), L, d\right)$, then we have convergence in probability
        \begin{align}
        \label{meau}
        &\lim_{N\to \8} d\left(\hat{\mu}_{\nabla^2 \mathcal{H}(\mathbf{u}^*)}, \tau_{-u^*}(\sigma_{0, 4\sqrt{B''}} \boxplus \hat{\mu}_{-t_0 \Delta+\mu_0 I_{L^d}})\right) =0,
    \end{align}
    where $\boxplus$ denotes the free additive convolution of probability measures. Consequently, the lower edge of the limiting measure $\tau_{-u^*}(\sigma_{0, 4\sqrt{B''}} \boxplus \hat{\mu}_{-t_0 \Delta+\mu_0 I_{L^d}})$ is given by \pref{eq:emledge}.
    \end{theorem}


We briefly review previous results related to landscape complexity since it is the major tool for our proofs. Let $B_{N} \subset \mathbb{R}^{N}$ and $E \subset \mathbb{R}$ be Borel sets. We define
\begin{align}\label{eq:crtn}
\operatorname{Crt}_{N}\left(E, B_{N}\right) &=\#\left\{x \in B_{N}: \nabla H_{N}(x)=0, \frac{1}{N} H_{N}(x) \in E\right\}.
\end{align}
In the seminal work \cite{Fy04}, Fyodorov computed the large $N$ limit of $\ez[\Crt_N(\rz,\rz^N)]$ (which is known as complexity in the physics literature) for SRC fields and found a phase transition depending on the value of $\mu/\sqrt{B''(0)}$.
Since then, there has been a large body of work on this topic for isotropic fields on Euclidean spaces (see e.g.~\cite{FW07,FN12}) and on spheres (see e.g.~\cite{ABC13,ABA13,subag2017complexity,Mihai}). The latter is known as spherical mixed $p$-spin model in spin glasses. Recently, Auffinger and the second named author considered the complexity of  LRC fields \cite{AZ20,AZ22} assuming a technical condition which was removed in \cite{XYZ23}. Ben Arous, Bourgade and McKenna studied the complexity of elastic manifold and other models beyond invariance ensembles in \cite{BABM21,BABM22} using technology from universality theory of random matrices. In \cite{Fy15} Fyodorov showed that $\lim_{N\to\8}\ez[\Crt_N(\rz,\rz^N)]=1$ when $\mu>\sqrt{4B''(0)}$ (see also \cite{FN12} for the same conclusion for local minima).  This phenomenon is called topology trivialization when it was studied for another system by Fyodorov and Le Doussal in the pioneering work \cite{FLD14}.   For topology trivialization, there are recent works on the (variants of) spherical mixed $p$-spin model \cite{Be22, ABL22}. The work \cite{Be22} also discussed the Hessian spectrum at the global minimum of the spherical mixed $p$-spin model in the trivial regime, which is close to our investigation here.

\subsection*{Proof ideas, contributions, novelty and limitation}
In general, we follow the strategy of \cite{ABC13, ABA13, Fy15, Be22} where as many works in this direction the Kac--Rice representation of the mean number of critical points is employed and the convergence in probability is derived from vanishing complexity. Our argument relies crucially on the exact limit (not only the order) of mean number of critical points. To this end, we take advantage of various results and arguments developed in previous works for the models under consideration \cite{Fy15,AZ20,BABM21}. Typically, the landscape complexity is given as a variational problem over some parameters representing the location variable (e.g.~$\rho$ in Proposition \ref{x-10} and Theorem \ref{x-5}), the critical value (e.g.~$u$ in Proposition \ref{x-10} and Theorem \ref{x-5}) and other `physical' quantities that are related to constraints of critical points. Besides, there is a free parameter which is related to the random matrices used in the proof; see the variable $y$ in Lemma \ref{le:stech}, Proposition \ref{x-10} and Theorem \ref{x-5}, and the variable $u$ in Lemma \ref{unimax} and Equation \eqref{sups}. In the replica symmetric regime, the complexity function attains its maximum value zero at a unique point. The convergence in probability follows from avoiding this maximizer and using the Markov inequality. In this way, we prove limit theorems for the ground state energy (see Equations (\ref{hh3}) and (\ref{eq:energy}) for SRC fields and LRC fields respectively) and the minimizers (see Equations (\ref{q10}) and (\ref{g1}) for SRC fields and LRC fields respectively) like those in \cite{Be22}. However, for the convergence of empirical spectral measures we have to go one step further from the landscape complexity analysis, as we will now clarify. We need to additionally localize the free parameter via using the large deviation principle (LDP) with speed $N^2$ or concentration inequalities for the empirical spectral measures of Gaussian matrices.  As a technical contribution of our proofs, we conclude that the maximizer of this free parameter ($y^*$ or $u^*$) is responsible for the center of the limiting (deformed) semicircle distribution. This observation may be known to experts before, but it seems not explicitly emphasized to the best of our knowledge. 

For comparison, the argument of \cite{Be22} relies on the fact that the Hessian of the spherical mixed $p$-spin model can be written as a sum of two independent parts--a rescaled GOE matrix plus the radial derivative of the Hamiltonian. For SRC fields considered here, such a decomposition is no longer available; and for LRC fields, the Hessian has an even more involved structure due to its dependence on the location. As a result, some novel ideas must be developed on top of \cite{Fy15,AZ20,AZ22}. The role of radial derivative is replaced by the aforementioned free variable inside the integral (which is relatively direct for SRC fields but is in a rather `twisted' way for LRC fields). For elastic manifold, we simply apply the above ideas to the hard work already done in \cite{BABM21}.

For the convergence of the smallest eigenvalues, we adopt an idea of \cite{Be22} on bounding the second moment of the determinant of a shifted matrix from the Gaussian Orthogonal Ensemble (GOE) with the first moment squared; see Lemma \ref{x-13}. The proof of \eqref{zx} is the most difficult part of this paper. Morally speaking, the isotropy assumption is destroyed in the radial direction for the LRC fields, which results in one defective entry in the conditional distribution of the Hessian compared with a GOE matrix; see \pref{eq:gu}. The structure of this conditional distribution motivates us to utilize tools for rank one perturbations of GOE matrices; specifically, we use the Dumitriu--Edelman tridiagonal representation \cite{DE02} and the factorization of Bloemendal and Vir\'{a}g \cite{BV13} to get estimates for the smallest eigenvalue of the conditional distribution of Hessian (the matrix $G$) in Lemma \ref{51}. A pleasant fact we find is that the seemingly strange defective entry is within the BBP phase transition threshold and does not create an outlier for the limiting spectrum at a deterministic radial position. Together with complexity analysis, this allows us to show that the lower edge of the limiting empirical spectrum distribution matches the limit of the smallest eigenvalues of Hessians at the global minimum of the LRC fields. Note that the original prediction in \cite{FLD18} was about the limiting empirical spectral measures of Hessians. As Fyodorov pointed out to us (private communication), convergence of the smallest eigenvalues verifies  the replica symmetric prediction from \cite{FLD18} in a strong sense.

Outside the replica symmetric regime, the complexity is not vanishing and one may find exponentially many critical points. In this case, the method used here is not expected to give a solution to the conjecture of Fyodorov and Le Doussal. A natural approach to the problem is to identify when the complexity function of mean number of critical points considered here correctly gives the typical number of critical points, which requires a second moment computation as first carried out for the spherical pure $p$-spin model in \cite{subag2017complexity}. As observed in \cite{ABA13} for the spherical mixed $p$-spin model, concentration for complexity could fail, which may set a limit for the application of complexity analysis to the study of the limiting Hessian spectrum at the global minimum. In this sense, we may have pushed the first moment method to the extreme for studying the Hessian spectrum problems even though it is robust enough to handle several models in the replica symmetric regime. The more involved replica symmetry breaking regime needs more novel ideas and will be addressed in the future.

The paper is organized as follows. In Section \ref{se:euminimum}, we show that the global minimum of the model exists and is unique. Then we prove Theorem \ref{h-3} in Section \ref{se:src}. As the topology trivialization regime for LRC fields has not been determined yet, we identify this regime in Section \ref{se:toptriv} and then prove Theorem \ref{h-2} in Section \ref{se:lrc}. Finally, Theorem \ref{Elas} for elastic manifold is proved in Section \ref{se:em}.

\section{Existence and uniqueness of the global minimum}\label{se:euminimum}

Due to compactness, we know any smooth function on the sphere has at least two critical points, one global maximum and the other global minimum. Such a simple topological argument does not apply to our models. The next result shows that the Hamiltonian of \pref{eq:m} must attain its global minimum in $\rz^N$ almost surely.
\begin{proposition}\label{z-1}
    Let $X_N$ be a locally isotropic Gaussian random field. Then almost surely,
    \[
    \lim_{\|x\|\to+ \8} X_N(x)+\frac{\mu}{2}\|x\|^2 =+\8.
    \]
    Therefore, $H_N(x)$ attains its global minimum in $\rz^N$.
\end{proposition}
\begin{proof}
    Let $d_X(x,y)=(\ez[X_N(x)-X_N(y)]^2)^{1/2}$ for $x,y\in\rz^N$. Then by Assumption I,
    \[
        d_X(x,y)^2=N D(\frac1N\|x-y\|^2)\le  D'(0)\|x-y\|^2.
    \]
    Let $B_R=\{x\in\rz^N: \|x\|\le \sqrt{N} R\}$. It is well known that the covering number of $B_R$ with Euclidean $\eps$-ball is bounded above by $(1+\frac{2\sqrt{N}R}{\eps})^N$. It follows that the covering number of $B_R$ with $d_X$ $\eps$-ball is
    \[
    N(B_R,d_X,\eps)\le  (1+\frac{2\sqrt{N D'(0)} R}{\eps})^N.
    \]
    By Dudley's theorem \cite{AT09},
    \[
    \ez\Big[\sup_{x\in B_R} X_N(x)\Big]\le 24 \int_0^{\sqrt{N D'(0) }R} [\log N(B_R,d_X,\eps)]^{1/2} \dd \eps   \le 48\sqrt2 NR\sqrt{D'(0)}<\8.
    \]
    In particular, $X_N$ is a.s.~bounded. Let $\si_R^2=\sup_{x\in B_R} \ez[X_N(x)^2]=ND(R^2)$. Using the Borell--TIS inequality and writing $k=\ez[\sup_{x\in B_R} X_N(x)]$, we deduce
    \begin{align*}
        \pz\Big(\sup_{x\in B_R}X_N(x) \ge \sqrt{D'(0)}NR^{3/2}\Big) \le e^{-\frac{(\sqrt{D'(0)}NR^2-k)^2}{2ND(R^2)}}\le e^{-\frac{NR}{4}}
    \end{align*}
    for $R$ large. By symmetry, we have $\pz(\inf_{x\in B_R}X_N(x) \le -\sqrt{D'(0)}NR^{3/2})\le e^{-\frac{NR}{4}}$. Using the Borel--Cantelli lemma, almost surely we have
    \[
      \liminf_{R\to\8}   \inf_{x\in B_R}X_N(x)  +\sqrt{D'(0)}NR^{3/2}\ge0.
    \]
    This yields
    \[
    \liminf_{R\to\8}      \inf_{\sqrt{N}R/2\le  \|x\|\le \sqrt{N}R} X_N(x)  +\frac{\mu}{2}\|x\|^2\ge \liminf_{R\to\8}   \frac{\mu N R^2}{8}-\sqrt{D'(0)}NR^{3/2} =+\8.
    \]
    From here the assertion follows.
\end{proof}

For elastic manifold, we let $\mathbf{U}$ denote the vector space that consists of functions defined in (\ref{modEM}), which is equipped with the norm 
$$\|\mathbf{u}\|=\left(\sum_{x\in \Omega}\|\mathbf{u}(x)\|^2\right)^{1/2},\quad \text{for all} \quad\mathbf{u}\in\mathbf{U}.$$
Direct calculation yields
\begin{align}\label{mu0}
    \sum_{x, y \in \Omega}\left(\mu_0 I_{L^d}\right)_{x y}\langle\mathbf{u}(x), \mathbf{u}(y)\rangle=\mu_0\|\mathbf{u}\|^2.
\end{align}
On the other hand, it is well known that the lattice Laplacian $\Delta$ is semi-negative definite. By the elementary inequality $\langle\mathbf{u}(x), \mathbf{u}(x)\rangle+\langle\mathbf{u}(y), \mathbf{u}(y)\rangle\geq 2\langle\mathbf{u}(x), \mathbf{u}(y)\rangle$, for any function $\mathbf{u}$, we deduce 
\begin{align}\label{lap}
  \sum_{x, y \in \Omega}-t_0 \Delta_{x y}\langle\mathbf{u}(x), \mathbf{u}(y)\rangle \geq 0. 
\end{align}
For convenience, we set \begin{align}\label{redef}
    X_N(\mathbf{u})=\sum_{x \in \Omega} X_N(\mathbf{u}(x), x).
    \end{align} 
Therefore, combining (\ref{mu0}) with (\ref{lap}), we have
\begin{equation}\label{relarg}
    \mathcal{H}[\mathbf{u}]\geq \mu_0\|\mathbf{u}\|^2+X_N(\mathbf{u}).
\end{equation}
\begin{proposition}\label{minimum}
    With $X_N(\mathbf{u})$ defined in (\ref{redef}), we have almost surely,
    \[
    \lim_{\|\mathbf{u}\|\to+ \8} \mu_0\|\mathbf{u}\|^2+X_N(\mathbf{u})=+\8.
    \]
    Therefore, $\mathcal{H}(\mathbf{u})\to+ \8$ as $\|\mathbf{u}\|\to+ \8$, and thus $\mathcal{H}(\mathbf{u})$ attains its global minimum on $\mathbf{U}$ almost surely.
\end{proposition}
\begin{proof}
For $\mathbf{u}_1,\mathbf{u}_2\in\mathbf{U}$, we define 
\begin{align*}
d_X\left(\mathbf{u}_1,\mathbf{u}_2\right)&=\left(\ez[X_N(\mathbf{u}_1)-X_N(\mathbf{u}_2)]^2\right)^{1/2}\\&=\left(\sum_{x\in\Omega}\ez[X_N(\mathbf{u}_1(x),x)-X_N(\mathbf{u}_2(x),x)]^2\right)^{1/2}\\&
    =\left(\sum_{x\in\Omega}N D\left(\frac1N\|\mathbf{u}_1(x)-\mathbf{u}_2(x)\|^2\right)\right)^{1/2},
\end{align*} 
where the function $D(r)=2(B(0)-B(r))$. By Assumption I, we have
    \[
d_X(\mathbf{u}_1,\mathbf{u}_2)^2\leq\sum_{x\in\Omega} D'(0)\|\mathbf{u}_1(x)-\mathbf{u}_2(x)\|^2=  D'(0)\|\mathbf{u}_1-\mathbf{u}_2\|^2.
    \]
    Let $B_R=\{\mathbf{u}\in\mathbf{U}: \|\mathbf{u}\|\le \sqrt{NL^d} R\}$. Then the assertion follows from the same proof as for Proposition \ref{z-1} after replacing $N$ by $NL^d$ and observing \eqref{relarg}.
\end{proof}

The uniqueness of the global minimum can be derived using a version of Bulinskaya's lemma as in \cite[Proposition 3.1]{Bov99}. More directly, we may apply \cite[Lemma 2.6]{KP90} to get the almost sure uniqueness of the global minimum thanks to the continuity of sample paths of our models.

To end this section, we fix some notations for random matrices that will be used throughout. Let $M^N$ be an $N\times N$ matrix from the Gaussian Orthogonal Ensemble (GOE), i.e., $M^N$ is a real symmetric matrix whose entries $M_{i j}, i \leq j$ are independent centered Gaussian random variables with variance
$$
\mathbb{E} M_{i j}^{2}=\frac{1+\delta_{i j}}{2 N} .
$$
 We also write $\GOE_N$ for an $N\times N$ GOE matrix independent of all other randomness in question. Let $\rho_N(x)$ denote the density of expected empirical measure of $\GOE_N$, i.e., for a bounded continuous function $f$,
\[
\frac1N\ez\Big[\sum_{i=1}^Nf(\lambda_{i}\left(\GOE_N\right))\Big] =  \int f(x) \rho_N(x) \dd x,
 \]
where $\la_i(\GOE_N)$ are eigenvalues of $\GOE_N$. The empirical spectral measures of GOE matrices satisfy an LDP with speed $N^2$ \cite{benarous1997large}, which implies that for any $\delta>0$ there exists $c>0$ such that for large $N$
\begin{align}
\label{ac2}
\mathbb{P}\left(L_{N}\left(\GOE_N\right) \notin B\left(\sigma_{\mathrm{sc}}, \delta\right)\right) \leq e^{-c N^2},
\end{align}
where $B(\sigma_{\mathrm{sc}}, \delta)$ denotes the open ball in the space $\mathcal{P}(\mathbb{R})$ with center $\sigma_{\mathrm{sc}}$ and radius $\delta$ with respect to the metric $d$ given by \pref{eq:measd}.
By the LDP of the smallest eigenvalues of GOE matrices \cite{BDG01}, we have for all $\varepsilon>0$ there is a $\delta>0$ such that for $N$ large enough
\begin{align}
\label{pp6}
\mathbb{P}\left(\left|\lambda_{\min}\left(\GOE_N\right)+\sqrt{2}\right|>\varepsilon\right) \leq e^{-\delta N}.
\end{align}
We also have the large deviation estimate for the operator norm  \cite{BDG01}
\begin{align}\label{eq:opld}
\mathbb{P}\left(\lambda^*\left(\GOE_N\right)>K\right) \leq e^{-N K^{2} / 9}
\end{align}
for $K$ and  $N$ large enough, where $\la^*(\GOE_N)=\max_{1\le i\le N}|\la_i(\GOE_N)|$.  It follows that there exists a constant $C>0$ such that for any $k \geq 0$,
\begin{align}
\label{ac1}
\mathbb{E}\left[\lambda^{*}(\GOE_N)^k\right] \leq C^k.
\end{align}

\section{SRC fields}\label{se:src}

We first give the covariance structure when $X_N$ is an SRC field. We omit the proof as it is elementary and was given elsewhere; see e.g.~\cite[Lemma 3.2]{CS18}.

\begin{lemma}
\label{x-8}
Assume Assumption I.
 Consider the isotropic field model with correlator function $B$. Then for $x \in \mathbb{R}^{N}$,
$$
\begin{aligned}
\operatorname{Cov}\left[H_{N}(x), H_{N}(x)\right] &=NB(0), \\
\operatorname{Cov}\left[H_{N}(x), \partial_{i} H_{N}(x)\right]&=\operatorname{Cov}\left[\partial_{i} H_{N}(x), \partial_{j k} H_{N}(x)\right]=0, \\
\operatorname{Cov}\left[\partial_{i} H_{N}(x), \partial_{j} H_{N}(x)\right] &=-\operatorname{Cov}\left[H_{N}(x), \partial_{i j} H_{N}(x)\right]=-2 B^{\prime}(0) \delta_{i j}, \\
\operatorname{Cov}\left[\partial_{i j} H_{N}(x), \partial_{k l} H_{N}(x)\right] &=4 B^{\prime \prime}(0)\left[\delta_{i j} \delta_{k l}+\delta_{i k} \delta_{j l}+\delta_{i l} \delta_{j k}\right]/ N,
\end{aligned}
$$
where $\delta_{i j}$ are the Kronecker delta function.
\end{lemma}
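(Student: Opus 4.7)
The plan is to reduce the computation to a chain-rule exercise on the covariance kernel of the Gaussian field $X_N$. Since $\frac{\mu}{2}\|x\|^2$ is deterministic, it has zero covariance with any random variable, and the only effect of passing from $X_N$ to $H_N$ on the second derivative is to add the deterministic shift $\mu\delta_{ij}$ to $\partial_{ij}H_N$, which again plays no role in any of the stated covariances. Thus every covariance in the statement reduces to the corresponding covariance of $X_N$ and its partial derivatives at a single point.

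Next I would use the standard fact that for a smooth centered Gaussian field with covariance kernel $K(x,y)=\ez[X_N(x)X_N(y)]$, the field and its partial derivatives form a jointly Gaussian family whose covariances are obtained by differentiating $K$, that is,
\begin{align*}
\Cov\bigl[\partial^{\alpha}X_N(x),\partial^{\beta}X_N(y)\bigr]=\partial^{\alpha}_x\partial^{\beta}_y K(x,y).
\end{align*}
By the definition of an SRC field, $K(x,y)=NB(r)$ with $r=\frac{1}{N}\|x-y\|^2$. Setting $x=y$ gives $r=0$ and $x_i-y_i=0$, which will kill many of the intermediate terms.

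I would then just run the chain rule and evaluate at $x=y$. Noting $\partial_{x_i}r=\frac{2}{N}(x_i-y_i)$ and $\partial_{y_i}r=-\frac{2}{N}(x_i-y_i)$, one finds
\begin{align*}
\partial_{x_i}\partial_{y_j}K(x,y)=-\tfrac{4}{N}B''(r)(x_i-y_i)(x_j-y_j)-2B'(r)\delta_{ij},
\end{align*}
which gives $-2B'(0)\delta_{ij}$ at $x=y$, and similarly $\partial_{y_i}\partial_{y_j}K|_{x=y}=2B'(0)\delta_{ij}$. One more $y$-derivative attaches an extra $(x_k-y_k)$ factor to every surviving term, so $\partial_{x_i}\partial_{y_j}\partial_{y_k}K|_{x=y}=0$, giving the vanishing of $\Cov[\partial_iH_N,\partial_{jk}H_N]$. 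The statement $\Cov[X_N(x),\partial_iX_N(x)]=0$ follows from $\partial_{y_i}K=-2B'(r)(x_i-y_i)|_{x=y}=0$, and $\Cov[X_N(x),X_N(x)]=NB(0)$ is immediate.

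The only step that requires a bit of care is the Hessian--Hessian covariance, which is a fourth-order mixed derivative. Taking $\partial_{x_k}$ of the formula above produces one purely cubic-in-$(x-y)$ piece (coming from $B'''$) and a sum of terms proportional to $B''(r)$ times a single $(x_p-y_p)$ factor; applying $\partial_{y_l}$ then either increases the order of those factors, which vanish at $x=y$, or annihilates the lone factor via a Kronecker delta. Only the second mechanism survives, and it produces exactly three $B''(r)$ contributions, one from each of the three terms in the bracket. Evaluated at $r=0$ and combined with the overall $-\frac{4}{N}$ prefactor and the sign from $\partial_{y_l}(-y_p)=-\delta_{pl}$, this yields
\begin{align*}
\partial_{x_i}\partial_{x_j}\partial_{y_k}\partial_{y_l}K(x,y)\big|_{x=y}=\tfrac{4B''(0)}{N}\bigl(\delta_{ij}\delta_{kl}+\delta_{ik}\delta_{jl}+\delta_{il}\delta_{jk}\bigr),
\end{align*}
which is the claimed identity. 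The main (minor) obstacle is simply the bookkeeping in this fourth-order chain-rule computation; no deeper idea is needed.
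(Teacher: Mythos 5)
Your proof is correct: the reduction to the covariance kernel $K(x,y)=NB(\tfrac1N\|x-y\|^2)$ of the centered field $X_N$, followed by a chain-rule computation of the mixed derivatives evaluated on the diagonal $x=y$, is exactly the standard argument. The paper itself omits the proof and simply points to a reference (\cite{CS18}*{Lemma 3.2}), so there is no competing method to compare against; your computation reproduces what that reference does.
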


From Lemma \ref{x-8}, it is straightforward to check that
\begin{align}
    \nabla^{2} H_{N}(x) &\stackrel{d}{=}\sqrt{8B^{\prime\prime}}\GOE_{N}+\sqrt{\frac{4B^{\prime\prime}}{N}}Z I_{N}+\mu I_{N} , \label{eq:goe1}\\
\Big(\nabla^{2} H_{N}(x) \Big| \frac{H_{N}(x)}{N}=u\Big) & \stackrel{d}{=}\sqrt{8B^{\prime\prime}}\GOE_{N}+\sqrt{\frac{4(BB^{\prime\prime}-B^{\prime 2})}{N B}}Z I_{N} \label{eq:goe2}\\ & \quad+\Big[\mu+\frac{2B^{\prime}}{B}(u-\frac{\mu\|x\|^{2}}{2N})\Big] I_{N}, \nonumber
\end{align}
 where $Z$ is a standard Gaussian random variable independent of $\GOE_{N}$, and
$ I_{N}$ is the $N \times N$ identity matrix. To simplify the notation, we define
\begin{align}
\label{wn}
W^{N}=\sqrt{8B^{\prime\prime}}(\GOE_{N}+\eta I_{N}),
\end{align}
where $\eta$ is a Gaussian random variable independent of $\GOE_{N}$ with mean $\frac{\mu+\frac{2B^{\prime}}{B}(u-\frac{\mu\|x\|^{2}}{2N})}{\sqrt{8B^{\prime\prime}}}$ and variance $\frac{BB^{\prime\prime}-B^{\prime2}}{2NBB^{\prime\prime}}$. By (\ref{eq:goe2}), we know $W^{N} \stackrel{d}{=} \left(\nabla^{2} H_{N}(x) | \frac{H_{N}(x)}{N}=u\right)$.

The following topology trivialization result is taken from \cite{Fy04,Fy15}. We put it here for later references. 
\begin{proposition}
\label{p-2}
 Recall from \pref{eq:crtn} that $\operatorname{Crt}_{N}\left(\mathbb{R}, \mathbb{R}^{N}\right)$ is the total number of critical points of $H_{N}$.
If $\mu>\sqrt{4 B^{\prime \prime}(0)}$, then
    \begin{align}
    \label{pp1}
    \lim _{N \rightarrow \infty} \mathbb{E}\left[\operatorname{Crt}_{N}\left(\mathbb{R}, \mathbb{R}^{N}\right)\right]=1.
    \end{align}
\end{proposition}

The next lemma is well known; see e.g.~\cite{Fy15,SZ22,Be22}. We add a proof for definiteness of the constants.
\begin{lemma}
\label{x-9}
Recall $\GOE_{N}$ is an $N \times N$ GOE matrix. Then for any $x \in \mathbb{R}$,
\begin{align*}
\mathbb{E}\left[\left|\operatorname{det}\left(\GOE_{N}\pm x  I_{N}\right)\right|\right]=\sqrt{2(N+1)} N^{-N / 2} \Gamma\left(\frac{N+1}{2}\right) e^{N x^{2} / 2} \rho_{N+1}\Big(\sqrt {\frac{N}{N+1}}x\Big) .
\end{align*}
\end{lemma}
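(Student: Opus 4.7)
The plan is to reduce both sides to integrals over $N$ eigenvalues using the joint density of GOE eigenvalues and match them via a Vandermonde ``bordering'' trick. With the normalization $\mathbb{E} M_{ij}^2=(1+\delta_{ij})/(2N)$ used in the paper, the matrix law is proportional to $e^{-(N/2)\mathrm{tr}(M^2)}$, so the joint eigenvalue density is
\[
p_N(\lambda_1,\ldots,\lambda_N)=Z_N^{-1}\prod_{i<j\le N}|\lambda_i-\lambda_j|\prod_{i\le N}e^{-N\lambda_i^2/2},
\]
where $Z_N$ is the corresponding partition function. Writing $\det(\GOE_N-xI_N)=\prod_{i\le N}(\lambda_i-x)$ and setting $\lambda_{N+1}:=x$, the key algebraic identity is
\[
\prod_{i\le N}|\lambda_i-x|\cdot\prod_{i<j\le N}|\lambda_i-\lambda_j|=\prod_{i<j\le N+1}|\lambda_i-\lambda_j|,
\]
so that $\mathbb{E}|\det(\GOE_N-xI_N)|$ becomes, up to $Z_N^{-1}$, a full Vandermonde integral on $N+1$ points with one of them pinned at $x$ and only the first $N$ of them carrying the Gaussian weight $e^{-N\lambda_i^2/2}$.

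Next I would rescale $\lambda_i=\sqrt{(N+1)/N}\,\mu_i$ for every $i\le N+1$, which converts the weight $e^{-N\lambda_i^2/2}$ into $e^{-(N+1)\mu_i^2/2}$ (the weight natural to $\GOE_{N+1}$) and turns the pinning into $\mu_{N+1}=\sqrt{N/(N+1)}\,x$. The Jacobian on the $N$ free variables together with the rescaled Vandermonde contribute the explicit factor $((N+1)/N)^{N(N+1)/4+N/2}$. By the definition of $\rho_{N+1}$ as the marginal density of one eigenvalue of $\GOE_{N+1}$, the resulting integral equals
\[
Z_{N+1}\,e^{(N+1)\mu_{N+1}^2/2}\,\rho_{N+1}(\mu_{N+1})=Z_{N+1}\,e^{Nx^2/2}\,\rho_{N+1}\bigl(\sqrt{N/(N+1)}\,x\bigr),
\]
which already produces exactly the $e^{Nx^2/2}\rho_{N+1}(\sqrt{N/(N+1)}\,x)$ structure in the stated identity.

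It then remains to simplify the overall constant $\tfrac{Z_{N+1}}{Z_N}\bigl(\tfrac{N+1}{N}\bigr)^{N(N+1)/4+N/2}$. I would evaluate $Z_N$ via the Mehta--Selberg integral
\[
\int_{\mathbb{R}^N}\prod_{i<j}|x_i-x_j|\,e^{-\sum x_i^2/2}\,dx=(2\pi)^{N/2}\prod_{j=1}^N\frac{\Gamma(1+j/2)}{\Gamma(3/2)},
\]
combined with the elementary simplifications $\Gamma(3/2)=\sqrt\pi/2$ and $\Gamma((N+3)/2)=\tfrac{N+1}{2}\Gamma((N+1)/2)$. The various powers of $N$ and $N+1$ coming from the eigenvalue rescaling and from the Selberg ratio then telescope, collapsing the prefactor to $\sqrt{2(N+1)}\,N^{-N/2}\,\Gamma((N+1)/2)$, which matches the statement. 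The $\pm$ sign is automatic: the GOE law is invariant under $M\mapsto -M$, so $\mathbb{E}|\det(\GOE_N+xI_N)|=\mathbb{E}|\det(\GOE_N-xI_N)|$, while on the right-hand side $\rho_{N+1}$ is even, making everything invariant under $x\mapsto -x$. The only delicate point is the careful bookkeeping of the Jacobian and Selberg constants at this last step; everything preceding it is a one-line Vandermonde manipulation.
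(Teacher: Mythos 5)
Your proposal is correct, and it takes a genuinely different route from the paper. The paper's proof starts from \cite{ABC13}*{Lemma 3.3}, which already packages a formula for $\mathbb{E}|\det(-X I_N+\GOE_N)|$ when $X$ is Gaussian with mean $x$ and variance $\sigma^2$, performs a change of variable, and then sends $\sigma\downarrow 0$ to extract the deterministic-shift identity. You instead compute directly from the GOE joint eigenvalue density: the Vandermonde ``bordering'' identity $\prod_{i\le N}|\lambda_i-x|\prod_{i<j\le N}|\lambda_i-\lambda_j|=\prod_{i<j\le N+1}|\lambda_i-\lambda_j|$ with $\lambda_{N+1}=x$, a rescaling $\lambda_i=\sqrt{(N+1)/N}\,\mu_i$ to convert the weight to the size-$(N+1)$ ensemble (which is exactly where the pinning $\mu_{N+1}=\sqrt{N/(N+1)}\,x$ and the factor $e^{Nx^2/2}$ come from), and the Mehta--Selberg normalization to collapse the constant. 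I verified the bookkeeping you deferred: $\Gamma((N+3)/2)/\Gamma(3/2)=(N+1)\Gamma((N+1)/2)/\sqrt{\pi}$, the exponent of $N+1$ in $\tfrac{Z_{N+1}}{Z_N}\big(\tfrac{N+1}{N}\big)^{N(N+1)/4+N/2}$ telescopes to $1/2$, and the exponent of $N$ to $-N/2$, giving exactly $\sqrt{2(N+1)}\,N^{-N/2}\,\Gamma((N+1)/2)$. Your approach is self-contained modulo standard random matrix facts and avoids the Gaussian-smoothing detour, whereas the paper's is shorter on the page but leans on \cite{ABC13} as a black box; the underlying algebra is ultimately the same since \cite{ABC13}*{Lemma 3.3} is itself a Selberg/Vandermonde computation.
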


\begin{proof}
Assume $X$ is a Gaussian random variable with mean $x$ and variance $\sigma^2$. Using  \cite[Lemma 3.3]{ABC13} with $m=x$, $t=\sigma$ and summing over the eigenvalues, we obtain
$$
\mathbb{E}\left[\left|\operatorname{det}\left(-X  I_{N}+\GOE_{N}\right)\right|\right]=\frac{(N+1)\Gamma\left(\frac{N+1}{2}\right)N^{-\frac{N+1}{2}}}{\sqrt{\pi \sigma^{2}}}\int_{\mathbb{R}} e^{\frac{(N+1)t^2}{2}-\frac{(\sqrt {\frac{N+1}{N}}t-x)^2}{2\sigma^2}}\rho_{N+1}(t)\dd t.
$$
Using the change of variable $y=\frac{\sqrt {\frac{N+1}{N}}t-x}{\sigma}$ yields
$$
\begin{aligned}
&\mathbb{E}\left[\left|\operatorname{det}\left(-X  I_{N}+\GOE_{N}\right)\right|\right]=\sqrt{2(N+1)} N^{-N / 2} \Gamma\left(\frac{N+1}{2}\right)\\& \quad\quad\quad\quad\quad\quad\quad\quad\quad\quad\quad\quad\quad\quad \times\int_{\mathbb{R}}\frac{1}{\sqrt{2 \pi}}e^{-\frac{y^2}{2}+ \frac{N(\sigma y+x)^2}{2}}\rho_{N+1}\Big(\sqrt {\frac{N}{N+1}}(\sigma y+x)\Big)\dd y.
\end{aligned}
$$
Sending $\sigma\rightarrow 0$ in the above equality, we get the desired result by symmetry.
\end{proof}

Let
$$
\Psi_{*}(x)=\int_{\mathbb{R}} \log |x-t|\sigma_{\mathrm{sc}} (\dd t).
$$
By calculation,
\begin{align*}
\Psi_{*}(x)= \begin{cases}\frac{1}{2} x^{2}-\frac{1}{2}-\frac{1}{2} \log 2, & |x| \leq \sqrt{2}, \\\frac{1}{2} x^{2}-\frac{1}{2}-\log 2-\frac{1}{2}|x| \sqrt{x^{2}-2}+\log \left(|x|+\sqrt{x^{2}-2}\right), & |x|>\sqrt{2}.\end{cases}
\end{align*}
For $(\rho,u,y)\in \rz_+\times \rz\times \rz$, let us define
\begin{align}\label{eq:psidef}
    \psi(\rho, u, y)=\Psi_{*}(y)-\frac{(u-\frac{\mu \rho^{2}}{2})^{2}}{2B}+\frac{\mu^{2} \rho^{2}}{4 B^{\prime}}+\log \rho-\frac{BB^{\prime\prime}}{BB^{\prime\prime}-B^{\prime2}}\Big(y+\frac{\mu+\frac{2B^{\prime}}{B}(u-\frac{\mu\rho^{2}}{2})}{\sqrt{8B^{\prime\prime}}}\Big)^2.
    \end{align}
\begin{lemma}
    \label{x-11}
    If $\mu>\sqrt{4B''}$, the function $\psi$ attains its unique maximum at $(\rho_*,u_*,y_*)$, where
    \begin{align}
    \label{219}
    \rho_*=\frac{\sqrt{-2 B^{\prime}}}{\mu},\quad u_{*}=\frac{B^{\prime}}{\mu},\quad y_{*}=-\frac{1}{\sqrt{2}}\left(\frac{\mu}{\sqrt{4 B^{\prime \prime}}}+\frac{\sqrt{4B^{\prime \prime}}}{\mu}\right).
    \end{align}
    Moreover, the maximum value is
    \begin{align*}
    \psi\left(\rho_{*}, u_{*}, y_{*}\right)=-\log \sqrt{4 B^{\prime \prime}}+\frac{1}{2} \log (-2B^{\prime})-\frac{1}{2}-\frac{1}{2}\log 2 .
    \end{align*}
    If $\mu\leq\sqrt{4B''}$, the function $\psi$ attains its unique maximum at $(\rho_*,u_*,y_*)$, where
    \begin{align}
    \rho_*=\frac{\sqrt{-2 B^{\prime}}}{\mu},\quad u_{*}=\frac{\mu B'}{2B''}-\frac{B^{\prime}}{\mu},\quad y_{*}=-\frac{\mu}{\sqrt{2 B^{\prime \prime}}}.
    \end{align}
    In this case, the maximum value is
    \begin{align*}
    \psi\left(\rho_{*}, u_{*}, y_{*}\right)=\frac{\mu^2}{8 B^{\prime \prime}}-1-\frac{1}{2} \log 2+\frac{1}{2} \log (-2B^{\prime})-\log \mu .
    \end{align*}
    \end{lemma}
    \begin{proof}
    Letting $v=\frac{u-\frac{\mu \rho^{2}}{2}}{\sqrt{B}}$, $J=\sqrt{4 B^{\prime \prime}}$ and $\beta=\frac{2B^{\prime}}{\sqrt{B}}$ gives
    \begin{align*}
    \psi(\rho, u, y)=\tilde\psi(\rho, v, y):=\Psi_{*}(y)-\frac{J^{2}}{J^{2}-\beta^{2}}\left(y+\frac{\mu}{\sqrt{2} J}+\frac{\beta v}{\sqrt{2} J}\right)^{2}-\frac{v^{2}}{2}+\frac{\mu^2\rho^2}{4B^{\prime}}+\log \rho.
    \end{align*}
    It is clear that $\Psi_{*}(y)-\frac{J^{2}}{J^{2}-\beta^{2}}\left(y+\frac{\mu}{\sqrt{2} J}+\frac{\beta v}{\sqrt{2} J}\right)^{2}-\frac{v^{2}}{2}$ does not depend on $\rho$ and $\rho_*=\frac{\sqrt{-2 B^{\prime}}}{\mu}$ is the maximizer of $\frac{\mu^2\rho^2}{4B^{\prime}}+\log \rho$. Following the same argument as that in \cite[Example 2]{AZ20}, we can easily get the desired result.
    \end{proof}

Hereafter we will denote by $(\rho_{*}, u_{*}, y_{*})$ the unique maximizer of $\psi$ for SRC fields. The following result is \cite[Lemma 2.2]{Be22}, which was partially credited to previous works \cite{Fy04, Fo12, Fy15}.
\begin{lemma}
\label{x-2}
(i) For any $\delta>0$,
\begin{align*}
\rho_{N}(x)=\frac{\exp (N \Phi(x))}{2 \sqrt{\pi N}\left(x^{2}-2\right)^{\frac{1}{4}}\left(|x|+\sqrt{x^{2}-2}\right)^{\frac{1}{2}+o(1)}}
\end{align*}
with the error term o(1) converging to zero uniformly for $|x|>\sqrt{2}(1+\delta)$, and where 
\begin{align}\label{eq:phi}
    \Phi(x)=\left(-\frac{|x| \sqrt{x^{2}-2}}{2}+\log \left(\frac{|x|+\sqrt{x^{2}-2}}{\sqrt{2}}\right)\right) \indi_{\{|x| \geq \sqrt{2}\}}\le0.
    \end{align}

(ii) For any $\varepsilon>0$ and large enough $N$, for all $x \in \mathbb{R}$,
\begin{align*}
e^{N \Phi(x)(1+\varepsilon)-N \varepsilon} \leq \rho_{N}(x) \leq e^{N \Phi(x)(1-\varepsilon)+N \varepsilon} .
\end{align*}
\end{lemma}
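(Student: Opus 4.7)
The plan is to reduce the claim to classical Plancherel--Rotach asymptotics via the exact formula for the mean eigenvalue density of the GOE. With the normalization $\ez M_{ij}^2 = (1+\de_{ij})/(2N)$ used here, the one-point correlation function admits (after an appropriate rescaling to put the Hermite weight $e^{-y^2/2}$ in standard form) a closed expression of the Mehta type,
\[
\sqrt{N}\,\rho_N(x) \;=\; \sum_{k=0}^{N-1}\phi_k(\sqrt{N}\,x)^2 \;+\; R_N(\sqrt{N}\,x),
\]
where $\phi_k$ are the $L^2(\rz)$-normalized Hermite functions and $R_N$ is the GOE skew-correction built from $\phi_{N-1}$ and its antiderivative. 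The strategy is then to substitute the Plancherel--Rotach asymptotics for each $\phi_k$ outside the bulk and evaluate the resulting $k$-sum by Laplace's method.

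For part (i), setting $s=k/N\in(0,1]$ and $|x|>\sqrt{2}(1+\de)$, the uniform asymptotic expansion reads
\[
\phi_k(\sqrt{N}\,x)^2 \;=\; \frac{1+o(1)}{\pi\sqrt{N}\,(x^2-2s)^{1/2}}\,\exp\!\bigl(-2N\,I(x,s)\bigr),
\]
where $I(x,\cdot)$ is an explicit rate function that is strictly decreasing in $s$ with $-I(x,1)=\Phi(x)$. The $k$-sum concentrates near $k=N-1$: expanding $I(x,s)=-\Phi(x)+\partial_s I(x,1)(s-1)+O((s-1)^2)$ and summing the geometric tail produces an extra factor of order $1/[2N\,|\partial_s I(x,1)|]$. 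A direct calculation gives $|\partial_s I(x,1)|=\log(|x|+\sqrt{x^2-2})-\tfrac12\log 2$, and combining this with the Hermite prefactor $(x^2-2)^{-1/4}$ at $s=1$ reproduces the three subleading factors in the claimed formula; the logarithmic term is precisely what gets absorbed into the $o(1)$ sitting in the exponent of $|x|+\sqrt{x^2-2}$. The skew-correction $R_N$ turns out to contribute at the same exponential order $e^{N\Phi(x)}$ but with a polynomial prefactor which, after an integration by parts on its antiderivative, adjusts only the overall constant, yielding the factor $\tfrac12$ in the denominator of the stated formula.

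For part (ii), on the exterior region $|x|>\sqrt{2}(1+\de)$ the two-sided bound follows immediately from (i), since the polynomial prefactors in $x$ and $N$ can be absorbed into $e^{\pm N\varepsilon}$ once $N$ is large enough. On the bulk region $|x|\le\sqrt{2}(1+\de)$, $\Phi$ vanishes, so the claim reduces to $e^{-N\varepsilon}\le\rho_N(x)\le e^{N\varepsilon}$ for $N$ large. The upper bound is a standard uniform bound $\rho_N\le C$ coming from the Mehta representation and uniform Hermite bounds, while the lower bound follows from pointwise convergence of $\rho_N$ to the semicircle density on the bulk interior, interpolated with (i) near the edge $|x|\approx\sqrt{2}(1+\de)$.

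The main obstacle is making the Plancherel--Rotach expansion uniform in $k\in\{0,\dots,N-1\}$ and in $x$ on $\{|x|>\sqrt{2}(1+\de)\}$, and merging the contribution of the skew-correction $R_N$ with the boundary term $k=N-1$ so that the precise prefactor $\tfrac{1}{2\sqrt{\pi N}\,(x^2-2)^{1/4}\,(|x|+\sqrt{x^2-2})^{1/2+o(1)}}$ emerges. These are standard but technically involved computations in classical Hermite asymptotics, all of whose ingredients appear in Szeg\H{o}'s monograph and in the references cited immediately after the statement.
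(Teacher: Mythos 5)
The paper does not prove this lemma at all; it is imported verbatim as \cite[Lemma~2.2]{Be22}, with the underlying asymptotics credited to \cite{Fy04}, \cite{Fo12}, and \cite{Fy15}. So you are attempting a direct derivation of a result the paper simply cites. The Plancherel--Rotach route you sketch is in fact the classical one used in those references: write the GOE one-point density via the Hermite/Christoffel--Darboux kernel plus the $\beta=1$ skew-correction, substitute the uniform outer Plancherel--Rotach expansion, and evaluate the $k$-sum by Laplace asymptotics concentrated at $k=N-1$. That outline is sound.

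However, there are two concrete gaps. First, in part (ii) you assert that ``on the bulk region $|x|\le\sqrt{2}(1+\delta)$, $\Phi$ vanishes,'' which is false: from \eqref{eq:phi}, $\Phi$ is nonzero (though small and negative) on $\sqrt{2}<|x|\le\sqrt{2}(1+\delta)$. Your reduction of (ii) on the bulk to ``$e^{-N\varepsilon}\le\rho_N\le e^{N\varepsilon}$'' therefore silently omits the edge transition strip, where neither your part~(i) estimate (which requires $|x|>\sqrt2(1+\delta)$) nor the bounded/semicircle argument directly applies. The claim can be rescued by taking $\delta$ small enough that $|\Phi(x)|\le\varepsilon/2$ on that strip and using a crude polynomial lower bound on $\rho_N$ near the edge (Airy regime), but as written the step would fail. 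Second, the treatment of the skew-correction $R_N$ is too thin to establish the precise prefactor $\tfrac{1}{2\sqrt{\pi N}(x^2-2)^{1/4}(|x|+\sqrt{x^2-2})^{1/2+o(1)}}$. For GOE this correction is of the same exponential order as the diagonal Hermite sum, and the factor $\tfrac12$ together with the exact power on $(|x|+\sqrt{x^2-2})$ emerges only after a careful integration-by-parts and bookkeeping of the $k=N-1$ boundary terms; ``adjusts only the overall constant'' is a conclusion, not a derivation. These are precisely the technical points that make the literature proof nontrivial.

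Given the paper's choice to cite, the cleaner course here is to retain the citation; if you want a self-contained proof, the two items above (the edge transition strip in (ii), and the precise GOE skew-correction constant in (i)) are the parts that must be spelled out.
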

Note that
\begin{align}\label{eq:psiphi}
    \Psi_{*}(y)=\frac{1}{2}y^2-\frac{1}{2}-\frac{1}{2}\log 2 +\Phi(y).
\end{align}
Recalling Lemma \ref{x-8} and the representation \eqref{wn}, the density of $\nabla H_N(x)$ at 0, the density of $\frac1N H_N(x)$ and the density of $\eta$ are, respectively,
\begin{align}
     p_{\nabla H_{N}(x)}(0)&=\frac{1}{(2 \pi)^{N / 2} (-2B^{\prime})^{N / 2}} e^{-\frac{\mu^{2}\|x\|^{2}}{2 (-2B^{\prime})}} = \frac{1}{(2 \pi)^{N / 2} (-2B^{\prime})^{N / 2}} e^{-\frac{N \mu^{2}\rho^2}{2 (-2B^{\prime})}}, \label{eq:gradhnpdf}\\
     f_{\frac{1}{N}H_{N}(x)}(u)&=\sqrt{\frac{N}{2\pi B}}e^{\frac{-N(u-\frac{\mu\|x\|^{2}}{2N})^2}{2B}} = \sqrt{\frac{N}{2\pi B}}e^{\frac{-N(u-\frac{\mu \rho^2}{2})^2}{2B}}, \label{eq:hnpdf}\\
    g_{\eta}(t)&=\sqrt{\frac{NBB^{\prime\prime}}{\pi(BB^{\prime\prime}-B^{\prime2})}}e^{\frac{-NBB^{\prime\prime}(t-\frac{\mu+\frac{2B^{\prime}}{B}(u-\frac{\mu\rho^{2}}{2})}{\sqrt{8B^{\prime\prime}}})^2}{BB^{\prime\prime}-B^{\prime2}}}, \label{eq:etapdf}
\end{align}
where we have used the relation $\rho=\|x\|/\sqrt{N}$. Let $0\le R_1<R_2\le \8$, $E$ and $T$ be open subsets of $\rz$, and we define the integral
\begin{align}
\label{eq:jdef}
    \sfJ_N((R_1,R_2),E, T) &= \int_{R_1}^{R_2} \int_E \int_{T} \mathbb{E}[|\operatorname{det}(\GOE_{N}+t I_N)|] \\
    &\quad \times g_{\eta}(t)p_{\nabla H_{N}(x)}(0) f_{\frac{1}{N}H_{N}(x)}(u) \rho^{N-1}\dd  t \dd  u \dd  \rho \notag.
\end{align}
The following result reduces the analysis of Laplace asymptotics to the compact setting, which is commonly called exponential tightness.
For a subset $A\subset \rz$, we write $A^c$ for its complement in $\rz$.

\begin{lemma}\label{le:exptt}
    We have
    \begin{align*}
        &\limsup_{K\to\8}\limsup_{N\to\8} \frac1N \log \sfJ_N((K,\8),\rz, \rz) = -\8,\\
        &\limsup_{K\to\8}\limsup_{N\to\8} \frac1N \log \sfJ_N(\rz_+,[-K,K]^c, \rz) = -\8,\\
        &\limsup_{K\to\8}\limsup_{N\to\8} \frac1N \log \sfJ_N(\rz_+,\rz, [-K,K]^c) = -\8.
    \end{align*}
\end{lemma}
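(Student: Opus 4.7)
The strategy is to bound the integrand of $\sfJ_N$ pointwise on the exponential scale by $\exp(N\psi(\rho,u,t)+o(N))$ and then exploit that $\psi$ decays at infinity in each of the three variables.

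For the pointwise bound, Lemma~\ref{x-9}, Stirling's formula, and Lemma~\ref{x-2} yield uniformly in $t\in\rz$
\begin{align*}
\ez\bigl[|\det(\GOE_N + tI_N)|\bigr] \le \exp\bigl(N\Psi_*(t)+o(N)\bigr),
\end{align*}
where the identity $\Psi_*(t)=\tfrac{t^2}{2}-\tfrac{1}{2}-\tfrac{\log 2}{2}+\Phi(t)$ from \eqref{eq:psiphi} matches the $e^{Nt^2/2}$ factor of Lemma~\ref{x-9} with the $e^{N\Phi(t)}$ decay of $\rho_{N+1}$ (for $|t|>\sqrt{2}(1+\delta)$ we use Lemma~\ref{x-2}(i) to absorb the polynomial prefactor, while inside the bulk the semicircle density is bounded). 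Multiplying by the Gaussian densities \eqref{eq:gradhnpdf}--\eqref{eq:etapdf} and the radial factor $\rho^{N-1}$ and collecting exponents, we obtain
\begin{align*}
\ez\bigl[|\det(\GOE_N+tI_N)|\bigr]\,g_\eta(t)\,p_{\nabla H_N(x)}(0)\,f_{\frac{1}{N}H_N(x)}(u)\,\rho^{N-1} \le p(N)\,\exp\bigl(N\psi(\rho,u,t)+o(N)\bigr),
\end{align*}
with $p(N)$ a polynomial factor coming from the Gaussian normalizations and Stirling error, and $\psi$ exactly as in \eqref{eq:psidef}.

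To analyze the tail decay of $\psi$, set $v=(u-\tfrac{\mu\rho^2}{2})/\sqrt{B}$, $J=\sqrt{4B''}$, $\beta=2B'/\sqrt{B}$ as in the proof of Lemma~\ref{x-11}; then \eqref{eq:psidef} reads
\begin{align*}
\psi(\rho,u,t) = \Bigl[\Psi_*(t) - \tfrac{J^2}{J^2-\beta^2}\bigl(t+\tfrac{\mu}{\sqrt{2}J}+\tfrac{\beta v}{\sqrt{2}J}\bigr)^2 - \tfrac{v^2}{2}\Bigr] + \Bigl[\tfrac{\mu^2\rho^2}{4B'}+\log\rho\Bigr] =: \psi_1(v,t)+\psi_2(\rho).
\end{align*}
Cauchy--Schwarz applied to the spectral representation $B(r)=c_0+\int e^{-rs^2}\nu(ds)$ gives $BB''>B'^2$ in the nondegenerate case, so $J^2>\beta^2$. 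Consequently $\psi_1$ is bounded above by a finite constant $C$ and decays quadratically in both $v$ and $t$ (the $\Psi_*(t)\sim\log|t|$ growth is dominated by the $t$-quadratic), while $\psi_2(\rho)$ is bounded above and decays like $-c\rho^2$ as $\rho\to\infty$ because $B'<0$. For the $\rho$-tail, $\sup_{u,t}\psi\le C-c\rho^2+\log\rho$, hence $\sfJ_N((K,\infty),\rz,\rz)\le \exp(-cNK^2/2+o(N))$ for $K$ large; for the $t$-tail, $\sup_{\rho,u}\psi\le C'-c't^2$ for $|t|$ large, giving $\sfJ_N(\rz_+,\rz,[-K,K]^c)\le \exp(-c'NK^2/2+o(N))$.

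The $u$-tail is the main obstacle: $u$ and $\rho$ are coupled through the Gaussian mean $\mu\rho^2/2$ of $H_N(x)/N$, so one cannot bound the $u$-integral at fixed $\rho$. We distinguish whether $\rho^2$ is close to $2u/\mu$: if close (so $v$ is bounded) then $\psi_2(\rho)\approx \tfrac{\mu u}{2B'}\to-\infty$ linearly in $|u|$ (using $B'<0$); if not close then $|v|$ is large and $-v^2/2$ dominates; for $u<0$ one automatically has $|v|\ge|u|/\sqrt{B}$ since $u-\tfrac{\mu\rho^2}{2}\le u<0$. In every case $\sup_{\rho,t}\psi\to-\infty$ at least linearly in $|u|$, yielding $\sfJ_N(\rz_+,[-K,K]^c,\rz)\le \exp(-cNK+o(N))$. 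Taking $\tfrac{1}{N}\log$ and then $K\to\infty$ yields the desired $-\infty$ in all three cases. The reparameterization $(u,\rho)\mapsto(v,\rho)$ decouples the variables, but the decay rate in $|u|$ is only linear (arising from $\psi_2$ along the ridge $\rho^2\approx 2u/\mu$) rather than quadratic as in the other two tails; this is still sufficient for exponential tightness.
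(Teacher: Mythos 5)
Your proposal is correct in spirit but takes a genuinely different route from the paper, and one step needs more care than you have given it.

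The paper's proof of the first two assertions does not use the Laplace asymptotics of $\rho_{N+1}$ at all. It bounds $|\det(\GOE_N+\eta I_N)|\le (\lambda^*(\GOE_N)+|\eta|)^N$, controls $\ez[\lambda^*(\GOE_N)^N]\le C^N$ via \eqref{ac1} and $\ez[|\eta|^N]$ via the explicit Gaussian moments, and then lets the Gaussian factors $e^{-N\mu^2\rho^2/(-4B')}$ and $e^{-N(u-\mu\rho^2/2)^2/(2B)}$ supply the exponential decay in $\rho$ and $u$ after a trivial integration. For the third assertion the paper first invokes the first two to reduce to a box in $(\rho,u)$, then bounds $\rho_{N+1}\le e^{N\eps}$ (using only $\Phi\le 0$) so that the Gaussian factor $g_\eta(t)$ alone beats the $e^{Nt^2/2}$ coming from Lemma \ref{x-9}; the net quadratic coefficient in $t$ is $\tfrac12-\tfrac{BB''}{BB''-B'^2}<0$. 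None of this requires the precise rate $\Phi(t)\sim -t^2/2$.

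Your proposal instead bounds the integrand by $\exp(N\psi(\rho,u,t)+o(N))$ with $\psi$ from \eqref{eq:psidef} and shows $\psi\to-\infty$ at infinity in each variable. The idea is sound and has the virtue of being the same mechanism that drives the compact Laplace analysis of Lemma \ref{le:stech}, but one claim is overstated: the bound $\ez[|\det(\GOE_N+tI_N)|]\le\exp(N\Psi_*(t)+o(N))$ is \emph{not} uniform in $t$ with an additive $o(N)$ error. Lemma \ref{x-2}(ii) only gives $\rho_{N+1}(x)\le e^{N\Phi(x)(1-\eps)+N\eps}$, i.e.\ a multiplicative relative error on $\Phi$; since $\Phi(t)\sim -t^2/2$ for large $|t|$, the effective exponent is $\Psi_*(t)+\eps|\Phi(t)|\approx\eps t^2/2$, which grows. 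Your conclusion survives because the negative quadratic $-\tfrac{J^2}{J^2-\beta^2}(t+\cdots)^2$ has coefficient strictly less than $-1$ (as you correctly note, $J^2>\beta^2$ since $BB''>B'^2$), so for $\eps$ small enough the net coefficient stays negative — but you must fix $\eps$ small before taking $K\to\infty$, and you should state this explicitly rather than asserting uniformity. Once that is done, the rest of your argument — the $(v,\rho)$-decoupling, the casework giving only linear decay in $|u|$ along the ridge $\rho^2\approx 2u/\mu$, and the quadratic decay in $\rho$ and $t$ — is correct and yields the three limits. One further small point: the Cauchy–Schwarz argument gives $BB''\ge B'^2$; strict inequality (needed for $\psi$ to be finite and for $\sfb^2>0$) is a nondegeneracy hypothesis that the paper takes as implicit, so it is worth flagging rather than deriving.
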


\begin{proof}
    We follow the strategy of \cite[Section 4]{AZ20}. For the first assertion, observe that
    \[
        \ez(|\eta|^N)\le C^N \Big[N^{N/2}\Big(\frac{BB''-{B'}^2}{2NBB''}\Big)^{N/2} +\Big|\frac{\mu+\frac{2B^{\prime}}{B}(u-\frac{\mu\rho^{2}}{2})}{\sqrt{8B^{\prime\prime}}}\Big|^N\Big]\le C_{\mu,B}^N\Big[1+\Big|u-\frac{\mu\rho^2}{2}\Big|^N\Big].
    \]
Here and in what follows, we use $C$ to denote an absolute constant and $C_{\mu,B}$ a constant depending only on $\mu$ and the function $B$ which may vary from line to line. Combining with \eqref{ac1}, we deduce
\begin{align*}
    &\sfJ_N((K,\8),\rz, \rz)= \int_K^\8 \int_\rz \ez[|\det(\GOE_N+\eta I_N) |]f_{\frac{1}{N}H_{N}(x)}(u)p_{\nabla H_{N}(x)}(0) \rho^{N-1}\dd  u \dd  \rho\\
    &\le C_{B}^N \int_K^\8 \int_\rz [\ez[|\la^*(\GOE_N)|^N]+\ez(|\eta|^N)] e^{-\frac{N(u-\frac{\mu \rho^2}{2})^2}{2B}} e^{-\frac{N \mu^{2}\rho^2}{2 (-2B^{\prime})}} \rho^{N-1}\dd  u \dd  \rho\\
    &\le C_{\mu,B}^N \int_K^\8 \int_\rz \Big[1+\Big|u-\frac{\mu\rho^2}{2}\Big|^N\Big] e^{-\frac{N(u-\frac{\mu \rho^2}{2})^2}{2B}} e^{-\frac{N \mu^{2}\rho^2}{2 (-2B^{\prime})}} \rho^{N-1}\dd  u \dd  \rho\\
    &\le C_{\mu,B}^N \int_K^\8 e^{-\frac{N \mu^{2}\rho^2}{2 (-2B^{\prime})}} \rho^{N-1} \dd  \rho \le  C_{\mu,B}^N e^{-\frac{N \mu^{2}K^2}{ (-8B^{\prime})}}.
\end{align*}
Similarly, for the second assertion, bounding the polynomials with exponential functions and using the change of variable $\rho=\sqrt{r},$ 
 we find
\begin{align*}
    &\sfJ_N(\rz_+,[-K,K]^c, \rz)\\&
    \le C_{\mu,B}^N   \int_0^\8 \int_{[-K,K]^c} \Big[1+\Big|u-\frac{\mu\rho^2}{2}\Big|^N\Big] e^{-\frac{N(u-\frac{\mu \rho^2}{2})^2}{2B}} e^{-\frac{N \mu^{2}\rho^2}{2 (-2B^{\prime})}} \rho^{N-1}\dd  u \dd  \rho\\
    &\le C_{\mu,B}^N\Big(\int_0^{K/\mu}+\int_{K/\mu}^\8\Big)\int_{[-K,K]^c}  e^{-\frac{N(u-\frac{\mu r}{2})^2}{4B}}e^{-\frac{N \mu^{2}r}{4 (-2B^{\prime})}}  \dd u \dd r\\
    &\le C_{\mu,B}^N \int_0^{K/\mu}\int_{[-K,K]^c}   e^{-\frac{N(|u|-\frac{ K}{2})^2}{4B}}e^{-\frac{N \mu^{2}r}{4 (-2B^{\prime})}}  \dd u \dd r +  C_{\mu,B}^N \int_{K/\mu}^\8 \int_{\rz} e^{-\frac{N(u-\frac{\mu r}{2})^2}{4B}}e^{-\frac{N \mu^{2}r}{4 (-2B^{\prime})}}  \dd u \dd r\\
    &\le    C_{\mu,B}^N (e^{-\frac{NK^2}{20B}} + e^{-\frac{N\mu K}{-8B'}}).
\end{align*}
For the third assertion, note that
\begin{align*}
    &\sfJ_N(\rz_+, \rz,[-K,K]^c)\le \sfJ_N((R,\8), \rz,\rz) +\sfJ_N(\rz_+, [-R,R]^c,\rz)\\&\quad\quad\quad\quad\quad\quad\quad\quad\quad\quad\quad\quad+ \sfJ_N((0,R),(-R,R),[-K,K]^c).
\end{align*}
The first two assertions imply that the first two terms on the right-hand side of the preceding display are exponentially negligible by choosing $R$ large enough. Using Lemmas \ref{x-9} and \ref{x-2} and choosing $K$ large
\begin{align*}
    &\sfJ_N((0,R),(-R,R),[-K,K]^c) \\
    &=\int_{0}^R \int_{-R}^R \int_{[-K,K]^c} \mathbb{E}[|\operatorname{det}(\GOE_{N}+t I_N)|] g_{\eta}(t)p_{\nabla H_{N}(x)}(0) f_{\frac{1}{N}H_{N}(x)}(u) \rho^{N-1}\dd  t \dd  u \dd  \rho\\
    &\le C_{\mu,B}^N \int_{0}^R \int_{-R}^R \int_{[-K,K]^c} e^{\frac{-NBB^{\prime\prime}(t-\frac{\mu+\frac{2B^{\prime}}{B}(u-\frac{\mu\rho^{2}}{2})}{\sqrt{8B^{\prime\prime}}})^2}{BB^{\prime\prime}-B^{\prime2}}}   e^{N t^{2} / 2} \rho_{N+1}(\sqrt {\frac{N}{N+1}}t) e^{N[\frac{-(u-\frac{\mu\rho^2}{2})^2}{2B}+\frac{\mu^2\rho^2}{4B^{\prime}}]} \\
    &\quad \times \rho^{N-1}\dd  t \dd  u \dd  \rho\\
    &\le C_{\mu,B,R}^N \int_{[-K,K]^c}  e^{\frac{-N(BB^{\prime\prime}+B^{\prime^2})t^2}{2(BB^{\prime\prime}-B^{\prime^2})}} e^{\frac{N\sqrt{B^{\prime\prime}}[\mu B+2|B^{\prime}|(R+\frac{\mu R^{2}}{2})]|t|}{\sqrt{2}(BB^{\prime\prime}-B^{\prime2})}} \dd t\\
    &\le C_{\mu,B,R}^N \int_{[-K,K]^c}  e^{\frac{-N(BB^{\prime\prime}+B^{\prime^2})t^2}{4(BB^{\prime\prime}-B^{\prime^2})}}  \dd t\le C_{\mu,B,R}^N e^{\frac{-N(BB^{\prime\prime}+B^{\prime^2})K^2}{4(BB^{\prime\prime}-B^{\prime^2})}}.
\end{align*}
The proof is complete.
\end{proof}

The next technical lemma is the cornerstone of various asymptotic analysis in this section.

\begin{lemma}\label{le:stech}
    Recall the notations \pref{eq:gradhnpdf}--\pref{eq:jdef}. Let $0\le R_1<R_2\le \8$, $E$ and $T$ be open subsets of $\rz$. Then
    \begin{align*}
        &\lim_{N\to\8}\frac1N\log \sfJ_N((R_1,R_2),E, -T)= -\frac12\log(2\pi) -\frac12\log(-2B') +\sup_{(\rho,u,y)\in F} \psi(\rho,u,y),
    \end{align*}
    where  $F=\left\{(\rho, u, y): \rho \in\left(R_{1}, R_{2}\right), u \in E,  y \in T\right\}$ and $\psi$ is given as in \pref{eq:psidef}. 
\end{lemma}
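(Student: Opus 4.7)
The plan is to perform a textbook Laplace--method analysis: substitute $t = -y$ so that every factor in the integrand assumes the form $e^{N(\text{limiting exponent}) + o(N)}$ with a continuous exponent whose sum reconstructs $\psi$, then optimize. The exponential tightness provided by Lemma~\ref{le:exptt} reduces the analysis to compact domains, so the real content is identifying the exponent and running a standard compact Laplace argument.

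First I would substitute $t = -y$ in the $t$-integral and use Lemma~\ref{x-9} to rewrite
\[
\ez\bigl[|\det(\GOE_N - yI_N)|\bigr] = \sqrt{2(N+1)}\,N^{-N/2}\,\Gamma\Big(\tfrac{N+1}{2}\Big)\,e^{Ny^2/2}\,\rho_{N+1}\Big(\sqrt{\tfrac{N}{N+1}}\,y\Big).
\]
Stirling gives $\tfrac1N\log\bigl[\sqrt{2(N+1)}\,N^{-N/2}\,\Gamma((N+1)/2)\bigr] = -\tfrac12\log 2 - \tfrac12 + o(1)$, while Lemma~\ref{x-2}(ii) furnishes the uniform envelope $\tfrac1N\log\rho_{N+1}(\sqrt{N/(N+1)}\,y) = \Phi(y) + o(1)$, with the error uniform on any compact set. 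Combining with \pref{eq:psiphi} yields $\tfrac1N\log\ez[|\det(\GOE_N - yI_N)|] = \Psi_*(y) + o(1)$ uniformly on compacts.

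Next I would feed in the explicit Gaussian densities \pref{eq:gradhnpdf}--\pref{eq:etapdf} together with the Jacobian $\rho^{N-1} = e^{(N-1)\log\rho}$. The exponential prefactor $(2\pi)^{-N/2}(-2B')^{-N/2}$ of $p_{\nabla H_N(x)}(0)$ supplies the constants $-\tfrac12\log(2\pi) -\tfrac12\log(-2B')$ appearing in the final answer; the polynomial $\sqrt{N}$ prefactors from $g_\eta$ and $f_{H_N(x)/N}$ are subexponential. The surviving exponents, namely $\mu^2\rho^2/(4B')$ from $p_{\nabla H_N(x)}(0)$, $-(u - \mu\rho^2/2)^2/(2B)$ from $f_{H_N(x)/N}$, the Jacobian term $\log\rho$, and (after $t=-y$, using $(-y - M)^2 = (y+M)^2$ with $M := (\mu + (2B'/B)(u - \mu\rho^2/2))/\sqrt{8B''}$) the $g_\eta$ exponent $-BB''(y+M)^2/(BB''-B'^2)$, precisely reproduce the remaining summands of $\psi$. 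Together with the previous step this gives, uniformly on any compact $K\subset \rz_+\times\rz\times\rz$,
\[
    \tfrac{1}{N}\log[\text{integrand}] = -\tfrac{1}{2}\log(2\pi) - \tfrac{1}{2}\log(-2B') + \psi(\rho,u,y) + o(1).
\]

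Finally I would close the Laplace argument. For the upper bound, fix $\eps>0$; Lemma~\ref{le:exptt} produces $K$ so that the integral over the complement of the box $\mathcal B_K := (0,K)\times [-K,K]\times [-K,K]$ is bounded by $e^{-N/\eps}$ for $N$ large, while on the compact slab $F\cap \mathcal B_K$ the uniform bound above yields $\sfJ_N((R_1,R_2),E,-T) \le e^{N(\sup_F\psi - \tfrac12\log(2\pi) -\tfrac12\log(-2B') + \eps)}\,|F\cap \mathcal B_K| + e^{-N/\eps}$. For the lower bound I would pick a near-maximizer $(\rho_0,u_0,y_0)\in F$ of $\psi$ and restrict the integral to a small open ball in $F$ around it, using continuity of $\psi$ to extract a matching lower bound; letting $\eps\downarrow 0$ gives the claim. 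The step I expect to be most delicate is the uniform-on-compacts control of $\tfrac1N\log\rho_{N+1}$ across the semicircle edges $y=\pm\sqrt 2$ where $\Phi$ fails to be smooth, but Lemma~\ref{x-2}(ii) is tailored precisely to provide this control without invoking any pointwise asymptotics near the edge.
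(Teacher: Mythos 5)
Your proposal follows essentially the same route as the paper's proof: apply Lemma~\ref{x-9} with the reflection change of variable, use Stirling together with Lemma~\ref{x-2}(ii) and the identity~\eqref{eq:psiphi} to turn the determinant factor into $e^{N\Psi_*(y)+o(N)}$ uniformly on compacts, absorb the Gaussian densities and Jacobian into the remaining terms of $\psi$, and close with exponential tightness (Lemma~\ref{le:exptt}) plus a standard compact Laplace upper/lower bound using the openness of $E$ and $T$ and continuity of $\psi$. The only cosmetic difference is that the paper uses the change of variable $y=-\sqrt{N/(N+1)}\,t$ so that $\rho_{N+1}$ is evaluated directly at $y$, whereas you use $t=-y$ and keep the factor $\sqrt{N/(N+1)}$ inside the argument of $\rho_{N+1}$; both are handled by the same uniform-on-compacts control.
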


\begin{proof}
    Using Lemma \ref{x-9} with the change of variable $y=-\sqrt {\frac{N}{N+1}}t$, we have
    \begin{align*}
        &\int_{R_1}^{R_2} \int_E \int_{-T} \mathbb{E}[|\operatorname{det}(\GOE_{N}+t I_N)|] g_{\eta}(t)p_{\nabla H_{N}(x)}(0) f_{\frac{1}{N}H_{N}(x)}(u) \rho^{N-1}\dd  t \dd  u \dd  \rho\\
        &=\sqrt{\frac{N}{2\pi B}}\frac{\sqrt{2(N+1)} N^{-N / 2} \Gamma\left(\frac{N+1}{2}\right)}{(2 \pi)^{N / 2} (-2B^{\prime})^{N / 2}}\sqrt{\frac{NBB^{\prime\prime}}{\pi(BB^{\prime\prime}-B^{\prime2})}}\int_{R_{1}}^{R_{2}} \int_{E} \int_{-T} e^{N t^{2} / 2} \\
        &\quad \times e^{\frac{-NBB^{\prime\prime}(t-\frac{\mu+\frac{2B^{\prime}}{B}(u-\frac{\mu\rho^{2}}{2})}{\sqrt{8B^{\prime\prime}}})^2}{BB^{\prime\prime}-B^{\prime2}}}   \rho_{N+1}(\sqrt {\frac{N}{N+1}}t) e^{N[\frac{-(u-\frac{\mu\rho^2}{2})^2}{2B}+\frac{\mu^2\rho^2}{4B^{\prime}}]} \rho^{N-1}\dd  t \dd  u \dd  \rho\\
        &= \frac{(N+1)  N^{-\frac{N-1}{2}}\Gamma\left(\frac{N+1}{2}\right)\sqrt{4B^{\prime\prime}}}{(2\pi)^{\frac{N}2+1} \sqrt{BB^{\prime\prime}-B^{\prime2}}(-2B^{\prime})^{N/2}}  \int_{R_{1}}^{R_{2}} \int_{E} \int_{\sqrt{\frac{N}{N+1}}T} e^{\frac{-(N+1)(BB^{\prime\prime}+B^{\prime^2})y^2}{2(BB^{\prime\prime}-B^{\prime^2})}}\nonumber \\
        &\quad\times e^{-\frac{\sqrt{N(N+1)}\sqrt{B^{\prime\prime}}[\mu B+2B^{\prime}(u-\frac{\mu\rho^{2}}{2})]y}{\sqrt{2}(BB^{\prime\prime}-B^{\prime2})}} e^{N[\frac{-(u-\frac{\mu\rho^2}{2})^2}{2B}-\frac{[\mu B+2B^{\prime}(u-\frac{\mu\rho^{2}}{2})]^{2}}{8B(BB^{\prime\prime}-B^{\prime2})}+\frac{\mu^2\rho^2}{4B^{\prime}}]} \rho_{N+1}(y) \rho^{N-1}\dd  y \dd  u \dd  \rho\\
        &= C_N \sfI_N\Big((R_1,R_2),E,\sqrt{\frac{N}{N+1}}T\Big),
    \end{align*}
    where $C_N=\frac{(N+1)  N^{-\frac{N-1}{2}}\Gamma\left(\frac{N+1}{2}\right)\sqrt{4B^{\prime\prime}}}{(2\pi)^{\frac{N}2+1} \sqrt{BB^{\prime\prime}-B^{\prime2}}(-2B^{\prime})^{N/2}}$ and $\sfI_N((R_1,R_2),E,\sqrt{\frac{N}{N+1}}T)$ is the remaining integral.  The Stirling formula implies that $\lim_{N\to\8} \frac1N\log[\Gamma(\frac{N+1}{2})/N^{\frac{N-1}{2}}]= -\frac12\log2-\frac12$ and thus
    \[
    \lim_{N\to\8} \frac1N\log C_N=     -\frac12\log2-\frac12 -\frac12\log(2\pi) -\frac12\log(-2B').
    \]
    It remains to show that
    \begin{align}\label{eq:inlim}
        \lim_{N\to\8} \frac1N\log \sfI_N\Big((R_1,R_2),E,\sqrt{\frac{N}{N+1}}T\Big)=\frac12\log2+\frac12 +\sup_{(\rho,u,y)\in F} \psi(\rho,u,y).
    \end{align}
 Thanks to Lemma \ref{x-11}, we see that $\sup_{(\rho,u,y)\in F} \psi(\rho,u,y) \le \psi(\rho_*,u_*,y_*)<\8$. By continuity, $\psi$ attains its maximum on $\bar F$, the closure of $F$ in the Euclidean topology. Together with a density argument, we may find a maximizer $(\rho^*,u^*,y^*)\in \bar F$ so that $\psi(\rho^*,u^*,y^*)=\sup_{(\rho,u,y)\in F} \psi(\rho,u,y)$. Note that the two points  $(\rho_*,u_*,y_*)$ and $(\rho^*,u^*,y^*)$ may not be equal. Thus we use different notations here to distinguish them. Since $E$ and $T$ are open, for any $\de>0$, we have for $N$ large enough
    \[
    (R_1,R_2)\cap U_\de(\rho^*) \neq \emptyset, \ \     E\cap U_\de(u^*)\neq \emptyset , \ \ \sqrt{\frac{N}{N+1}} T\cap U_\de(y^*)\neq \emptyset,
    \]
    where $U_\de(x)=(x-\de,x+\de)$. Taking inf of the integrand of the integral
    $$ \sfI_N\Big((R_1,R_2)\cap U_\de(\rho^*),E \cap U_\de(u^*),\sqrt{\frac{N}{N+1}}T\cap U_\de(y^*)\Big),$$
    using Lemma \ref{x-2}, we find
    \begin{align*}
        &\liminf_{N\to\8} \frac1N\log \sfI_N\Big((R_1,R_2),E,\sqrt{\frac{N}{N+1}}T\Big)\\
        &\ge \liminf_{N\to\8} \frac1N\log \sfI_N\Big((R_1,R_2)\cap U_\de(\rho^*),E \cap U_\de(u^*),\sqrt{\frac{N}{N+1}}T\cap U_\de(y^*)\Big)\\
        &\ge \inf_{y\in U_\de (y^*)} \Big[- \frac{(BB^{\prime\prime}+B^{\prime^2})y^2}{2(BB^{\prime\prime}-B^{\prime^2})}+\Phi(y)(1+\eps)\Big]-\eps  \\
    &\quad-\sup_{y\in U_\de(y^*), u\in U_\de(u^*)} \frac{\sqrt{B^{\prime\prime}}[\mu B+2B^{\prime}(u-\frac{\mu\rho^{2}}{2})]y}{\sqrt{2}(BB^{\prime\prime}-B^{\prime2})}+\inf_{\rho\in (R_1,R_2)\cap U_\de(\rho^*)}\log \rho \\
        &\quad - \sup_{u\in U_\de(u^*),\rho\in (R_1,R_2)\cap U_\de(\rho^*)}\Big[ \frac{(u-\frac{\mu\rho^2}{2})^2}{2B}+\frac{[\mu B+2B^{\prime}(u-\frac{\mu\rho^{2}}{2})]^{2}}{8B(BB^{\prime\prime}-B^{\prime2})}-\frac{\mu^2\rho^2}{4B^{\prime}}\Big].
    \end{align*}
    Sending $\eps\downarrow 0$ and $\de\downarrow 0$, the lower bound of \pref{eq:inlim} follows from continuity, \pref{eq:psidef} and \pref{eq:psiphi}.

    For the upper bound, due to \pref{le:exptt} we may and do assume $\bar E$ and $\bar T$ are compact and $R_2<\8$. By Lemma \ref{x-2} and continuity, we have
    \begin{align*}
        &\limsup_{N\to\8} \frac1N\log \sfI_N\Big((R_1,R_2),E,\sqrt{\frac{N}{N+1}}T\Big) \\
        &\le \limsup_{N\to\8} \frac1N \sup_{(\rho,u,y)\in F} \Big[ \frac{-(N+1)(BB^{\prime\prime}+B^{\prime^2})y^2}{2(BB^{\prime\prime}-B^{\prime^2})}\\&\quad-\frac{\sqrt{N(N+1)}\sqrt{B^{\prime\prime}}[\mu B+2B^{\prime}(u-\frac{\mu\rho^{2}}{2})]y}{\sqrt{2}(BB^{\prime\prime}-B^{\prime2})}\\
        &\quad +N[\frac{-(u-\frac{\mu\rho^2}{2})^2}{2B}-\frac{[\mu B+2B^{\prime}(u-\frac{\mu\rho^{2}}{2})]^{2}}{8B(BB^{\prime\prime}-B^{\prime2})}+\frac{\mu^2\rho^2}{4B^{\prime}}]\\&\quad+(N-1) \log \rho  +N\Phi(y)(1-\eps)+N\eps\Big]\\
        &\le \sup_{(\rho,u,y)\in F} \Big[ \frac{- (BB^{\prime\prime}+B^{\prime^2})y^2}{2(BB^{\prime\prime}-B^{\prime^2})}-\frac{\sqrt{B^{\prime\prime}}[\mu B+2B^{\prime}(u-\frac{\mu\rho^{2}}{2})]y}{\sqrt{2}(BB^{\prime\prime}-B^{\prime2})}\\
        &\quad -\frac{(u-\frac{\mu\rho^2}{2})^2}{2B}-\frac{[\mu B+2B^{\prime}(u-\frac{\mu\rho^{2}}{2})]^{2}}{8B(BB^{\prime\prime}-B^{\prime2})}+\frac{\mu^2\rho^2}{4B^{\prime}} +\log \rho +\Phi(y)(1-\eps)\Big]+\eps.
    \end{align*}
    Sending $\eps\downarrow0$, combining \pref{eq:psiphi} with the definition \pref{eq:psidef}, we have proved the upper bound of \pref{eq:inlim}.
\end{proof}
In the following, we consider a shell domain $B_N(R_1,R_2)=\{x\in\rz^N: R_1< \|x\|/\sqrt{N}< R_2\}$ and write $\Crt_N(E,(R_1,R_2))=\Crt_N(E,B_N(R_1,R_2)).$ Since $\|x\|=\sqrt{N}R$ has Lebesgue measure zero in $\rz^N$ for $0<R<\8$, and we will only consider $\ez[\Crt_N(E,(R_1,R_2))]$, we may alternatively define $B_N(R_1,R_2)$ as a closed subset of $\rz^N$. Consequently, by abuse of notation, from time to time we also use $\Crt_N(E,(R_1,R_2))$ to denote the number of critical points when (part of) the boundary of $B_N(R_1,R_2)$ is included. 

\begin{proposition}
\label{x-10}
Let $0 \leq R_{1}<R_{2} \leq \infty$ and $E$ be an open set of $\mathbb{R}$. Assume Assumption I. 
Then
\begin{align}\label{eq:cpx1}
\lim _{N \rightarrow \infty} \frac{1}{N} \log \mathbb{E} [\operatorname{Crt}_{N}\left(E, \left(R_{1}, R_{2}\right)\right)]=\frac{1}{2} \log \left(8 B^{\prime \prime}\right)-\frac{1}{2} \log \left(-2B^{\prime}\right)+\frac{1}{2}+\sup _{(\rho, u, y) \in F} \psi(\rho, u, y),
\end{align}
where $F=\left\{(\rho, u, y): y \in \mathbb{R}, \rho \in\left(R_{1}, R_{2}\right), u \in E\right\}$, and the function $\psi$ is given as in \pref{eq:psidef}.

\end{proposition}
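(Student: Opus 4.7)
The plan is to derive the expression for $\ez \Crt_N(E,(R_1,R_2))$ as an explicit integral via the Kac--Rice formula, recognize this integral as (a constant multiple of) $\sfJ_N((R_1,R_2),E,\rz)$, and then invoke Lemma \ref{le:stech} together with Stirling's formula for the normalization.

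First I would apply the Kac--Rice formula (as in the proof of Proposition \ref{p-2}) to $\ez\Crt_N(E,B_N(R_1,R_2))$, getting an integral over $B_N(R_1,R_2)$ of $\ez[|\det\nabla^2 H_N(x)|\indi_{H_N(x)/N\in E}\mid \nabla H_N(x)=0]$ times the density $p_{\nabla H_N(x)}(0)$. By Lemma \ref{x-8}, $\nabla H_N(x)$ is independent of $(H_N(x),\nabla^2 H_N(x))$, so the conditioning drops out and I can further condition on $H_N(x)/N=u$ and integrate against $f_{H_N(x)/N}(u)$. By \pref{eq:goe2}--\pref{wn}, the conditional Hessian equals in law $\sqrt{8B''}(\GOE_N+\eta I_N)$ with $\eta$ Gaussian of density $g_\eta$ (depending on $u$ and $\rho=\|x\|/\sqrt{N}$), so taking expectations gives $(8B'')^{N/2}\int \ez[|\det(\GOE_N+t I_N)|]g_\eta(t)\dd t$.

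Next I would exploit spherical symmetry: the three densities $p_{\nabla H_N(x)}(0)$, $f_{H_N(x)/N}(u)$, and $g_\eta(t)$ all depend on $x$ only through $\rho=\|x\|/\sqrt{N}$, as one reads off from \pref{eq:gradhnpdf}--\pref{eq:etapdf}. Switching to polar coordinates in $\rz^N$ with $r=\|x\|=\sqrt{N}\rho$ produces the factor $|S^{N-1}|N^{N/2}\rho^{N-1}\dd\rho$, and integrating $\omega$ over $S^{N-1}$ is trivial. The resulting integral over $(\rho,u,t)\in(R_1,R_2)\times E\times\rz$ is by definition $\sfJ_N((R_1,R_2),E,\rz)$, giving the clean identity
\begin{equation*}
\ez\Crt_N(E,(R_1,R_2))=(8B'')^{N/2}N^{N/2}|S^{N-1}|\,\sfJ_N((R_1,R_2),E,\rz).
\end{equation*}

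Finally I would take $\frac1N\log$ of both sides. Lemma \ref{le:stech} (applied with $T=\rz$, so $-T=\rz$) delivers
\begin{equation*}
\lim_{N\to\infty}\frac1N\log\sfJ_N((R_1,R_2),E,\rz)=-\tfrac12\log(2\pi)-\tfrac12\log(-2B')+\sup_{(\rho,u,y)\in F}\psi(\rho,u,y),
\end{equation*}
while Stirling applied to $|S^{N-1}|=2\pi^{N/2}/\Gamma(N/2)$ yields $\frac1N\log(N^{N/2}|S^{N-1}|)\to\tfrac12\log(2\pi)+\tfrac12$. Summing the three contributions cancels the $\frac12\log(2\pi)$ term and produces exactly the right-hand side of \eqref{eq:cpx1}.

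There is no genuine analytic obstacle here: the Laplace asymptotics and the variational analysis of $\psi$ have been absorbed into Lemmas \ref{le:stech} and \ref{x-11}, and the tail truncation is handled by Lemma \ref{le:exptt}. The only thing to be careful about is bookkeeping of constants---in particular matching the Stirling contribution from $|S^{N-1}|$ against the $-\tfrac12\log(2\pi)$ appearing in Lemma \ref{le:stech}, and verifying that the spherical symmetry of the integrand is not broken by any of the densities entering the Kac--Rice integrand.
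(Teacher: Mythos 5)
Your proof is correct and follows essentially the same path as the paper: Kac--Rice with the indicator, independence from Lemma \ref{x-8} to drop the gradient conditioning and insert $f_{H_N/N}(u)$, spherical coordinates to pull out the $\rho$-integral, conditioning on $\eta=t$ via \pref{eq:goe2}--\pref{wn} to recognize $(8B'')^{N/2}\sfJ_N((R_1,R_2),E,\rz)$, and then Lemma \ref{le:stech} with $T=\rz$ together with Stirling for the prefactor. The only cosmetic deviation is that you carry the power $N^{N/2}$ where the paper writes $N^{(N-1)/2}$ in \eqref{eq:kr1}--\eqref{pp3}, a discrepancy that vanishes at the $\frac1N\log$ scale and does not affect the limit.
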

\begin{proof}
 Using the spherical coordinates with $\rho=\frac{\|x\|}{\sqrt{N}}$, by the Kac--Rice formula,
\begin{align}\label{eq:kr1}
&\mathbb{E} [\operatorname{Crt}_{N}\left(E,\left(R_{1}, R_{2}\right)\right)]=\int_{B_{N}} \int_{E} \mathbb{E}\Big[|\operatorname{det}\nabla^{2} H_{N}(x) | \Big| \frac{H_{N}(x)}{N}=u\Big]\\
&\quad\quad\quad\quad\quad\quad\quad\quad\quad\quad\quad\quad\times f_{\frac{1}{N}H_{N}(x)}(u) p_{\nabla H_{N}(x)}(0) \dd  u \dd  x\nonumber \\
&=S_{N-1} N^{(N-1) / 2} \int_{R_{1}}^{R_{2}} \int_{E} \mathbb{E}[|\operatorname{det} W^{N}|] f_{\frac{1}{N}H_{N}(x)}(u) p_{\nabla H_{N}(x)}(0) \rho^{N-1} \dd  u \dd  \rho \nonumber.
\end{align}
Here $S_{N-1}=\frac{2 \pi^{N / 2}}{\Gamma(N / 2)}$ is the area of the $N-1$ dimensional unit sphere, $W^N, f_{\frac{1}{N}H_{N}(x)}(u)$ and $p_{\nabla H_{N}(x)}(0)$ are given as in (\ref{wn}), \pref{eq:hnpdf} and \pref{eq:gradhnpdf}, respectively. Conditioning on $\eta=t$, using Lemma \ref{x-9} and the change of variable $y=-\sqrt {\frac{N}{N+1}}t$, we deduce that
\begin{align}
\label{pp3}
\mathbb{E} [\operatorname{Crt}_{N}\left(E,\left(R_{1}, R_{2}\right)\right)]
&= S_{N-1}N^{(N-1)/2}(8B^{\prime\prime})^{N/2}\int_{R_{1}}^{R_{2}} \int_{E} \int_{\mathbb{R}} \mathbb{E}[|\operatorname{det}(\GOE_{N}+t I_N)|]\\
&\qquad \times g_{\eta}(t)p_{\nabla H_{N}(x)}(0) f_{\frac{1}{N}H_{N}(x)}(u) \rho^{N-1}\dd  t \dd  u \dd  \rho\notag  \notag\\
&=S_{N-1}N^{(N-1)/2}(8B^{\prime\prime})^{N/2} \sfJ_N((R_1,R_2),E,\rz) \notag,
\end{align}
where $g_{\eta}(t)$ is given as in \pref{eq:etapdf}, and $\sfJ_N$ is defined in \pref{eq:jdef}. Since
\begin{align}\label{eq:snlim}
    \lim_{N\to\8} \frac1N \log (S_{N-1}N^{\frac{N-1}{2}})= \frac12\log(2\pi)+\frac12,
\end{align}
applying \pref{le:stech} completes the proof.
\end{proof}

We remark that as observed before $\sup _{(\rho, u, y) \in F} \psi(\rho, u, y) = \sup _{(\rho, u, y) \in \bar F} \psi(\rho, u, y)$ by density and continuity, where $\bar F$ is the closure of $F$ in the Euclidean topology.

\begin{lemma}
\label{x-12}
Assume Assumption I and $\mu>\sqrt{4 B^{\prime \prime}}$. Then for any $0\le R_{1} <R_{2}\le\8$, and open set $E\subset \mathbb{R}$,  if $\left(\rho_{*}, u_{*}\right) \notin  \overline{(R_1,R_2)} \times \bar E$, we have
\begin{align*}
\lim _{N \rightarrow \infty}  \mathbb{E} [\operatorname{Crt}_{N}\left(E, B_{N}\left(R_{1}, R_{2}\right)\right)]=0.\end{align*}
\end{lemma}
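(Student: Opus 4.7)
The plan is to reduce the lemma, via Proposition \ref{x-10}, to a strict variational inequality for $\psi$ whose unique global maximizer is identified in Lemma \ref{x-11}. Setting $C_0 = \tfrac{1}{2}\log(8B'') - \tfrac{1}{2}\log(-2B') + \tfrac{1}{2}$, Proposition \ref{x-10} gives
\[
\lim_{N\to\infty} \frac{1}{N}\log \mathbb{E}\operatorname{Crt}_N(E,(R_1,R_2)) \;=\; C_0 + \sup_{(\rho,u,y)\in F}\psi(\rho,u,y),
\qquad F = (R_1,R_2)\times E\times\mathbb{R}.
\]
Exponential decay of $\mathbb{E}\operatorname{Crt}_N(E,(R_1,R_2))$ to $0$ then follows as soon as this rate is shown to be strictly negative, so there is no need to control subexponential corrections.

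I would first verify by direct substitution of the explicit values in Lemma \ref{x-11} the identity $C_0 + \psi(\rho_*,u_*,y_*) = 0$, a computation that also serves as a sanity check against Proposition \ref{p-2}(i), where $\mathbb{E}\operatorname{Crt}_N(\mathbb{R},\mathbb{R}^N)\to 1$ because the global maximizer lies in the unrestricted domain. Hence the task reduces to
\[
\sup_{(\rho,u,y)\in F}\psi(\rho,u,y) \;<\; \psi(\rho_*,u_*,y_*).
\]

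By continuity $\sup_F\psi = \sup_{\bar F}\psi$ with $\bar F = \overline{(R_1,R_2)}\times\bar E\times\mathbb{R}$, and the hypothesis $(\rho_*,u_*)\notin \overline{(R_1,R_2)}\times\bar E$ ensures $(\rho_*,u_*,y_*)\notin \bar F$. Inspecting \pref{eq:psidef}, one checks that $\psi(\rho,u,y)\to -\infty$ along every sequence escaping $(0,\infty)\times\mathbb{R}\times\mathbb{R}$: the term $\log\rho$ is responsible for $\rho\downarrow 0$; the term $\mu^2\rho^2/(4B')$ (with $B'<0$) handles $\rho\uparrow\infty$; the term $-(u-\mu\rho^2/2)^2/(2B)$ handles $|u|\uparrow\infty$; and the Gaussian correction in $y$, whose coefficient $-BB''/(BB''-{B'}^2)$ has absolute value strictly greater than $1/2$ thanks to $BB''>{B'}^2$ (positivity of the conditional variance in \pref{wn}), dominates the logarithmic growth $\Psi_*(y)\sim \log|y|$ at infinity. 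Therefore the upper level sets of $\psi$ are compact, and $\sup_{\bar F}\psi$ is attained at some $(\rho^\dagger,u^\dagger,y^\dagger)\in\bar F$. Since $(\rho^\dagger,u^\dagger,y^\dagger)\neq (\rho_*,u_*,y_*)$, the uniqueness part of Lemma \ref{x-11} yields the strict inequality $\psi(\rho^\dagger,u^\dagger,y^\dagger)<\psi(\rho_*,u_*,y_*)$.

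Putting the pieces together, the limiting rate is strictly negative and hence $\mathbb{E}\operatorname{Crt}_N(E,(R_1,R_2))\to 0$ (in fact, exponentially). The only mildly delicate point is the coercivity/attainment argument on $\bar F$; the rest is direct computation against Lemma \ref{x-11} together with the already-established Proposition \ref{x-10}.
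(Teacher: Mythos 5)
Your proof is correct and follows essentially the same route as the paper: reduce via Proposition \ref{x-10} to the strict variational inequality $\sup_F \psi < \psi(\rho_*,u_*,y_*)$, and deduce this from the uniqueness of the maximizer in Lemma \ref{x-11} together with $(\rho_*,u_*) \notin \overline{(R_1,R_2)}\times\bar E$. You flesh out the coercivity/attainment of the supremum on $\bar F$ in more detail than the paper, which handles that point implicitly via the remark after Proposition \ref{x-10} and the exponential tightness in Lemma \ref{le:exptt}.
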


\begin{proof}
By Lemma \ref{x-11}, if $\left(\rho_{*}, u_{*}\right) \notin  \overline{(R_1,R_2)} \times \bar E$, letting $F=\overline{(R_1,R_2)} \times \bar E\times \rz$, the value of $\sup _{(\rho, u, y) \in F} \psi(\rho, u, y)$ is  strictly smaller than $\psi\left(\rho_{*}, u_{*}, y_{*}\right)=-\log \sqrt{4 B^{\prime \prime}}+\frac{1}{2} \log (-2B^{\prime})-\frac{1}{2}-\frac{1}{2}\log 2$ since $\overline{(R_1,R_2)} \times \bar E$ is a closed set. Therefore, the right-hand side of \pref{eq:cpx1} is strictly negative, which directly yields the assertion. 
\end{proof}

We are now ready to prove the main results of this section.
\begin{proof}[Proof of \eqref{xx}--\eqref{qq10}]
Since Proposition \ref{z-1} shows that this model has at least  one global minimum in $\rz^N$ almost surely, (\ref{xx}) follows from (\ref{pp1}) and Markov's inequality.

For any $\varepsilon>0$, observe that
$$(\rho_{*}, u_{*})\notin [0,\8)\times (u_{*}-\varepsilon,u_*+\eps)^c, \quad (\rho_{*}, u_{*})\notin [\rz_+\setminus (\rho_*-\eps,\rho_*+\eps) ]\times \rz.$$
Using Markov's inequality and Lemma \ref{x-12}, we find 
\begin{align*}
&\mathbb{P}\left(\left|\frac{1}{N} H_{N}\left(x^{*}\right)-u_{*}\right|>\varepsilon\right)=\mathbb{P}\left(\frac{1}{N} H_{N}\left(x^{*}\right) \in [u_{*}-\varepsilon,u_*+\eps]^c \right) \\
&\leq\mathbb{E} [\operatorname{Crt}_{N}\left([u_{*}-\varepsilon,u_*+\eps]^c, \left(0, \infty\right)\right)] \stackrel{N\to\8}\longrightarrow 0,\\
&\mathbb{P}\left(\left|\frac{\|x^{*}\|}{\sqrt{N}}-\rho_{*}\right|> \varepsilon\right) =\mathbb{P}\left(\frac{\|x^{*}\|}{\sqrt{N}}\in  [\rho_*-\eps, \rho_*+\eps]^c \right) \\
&\leq\mathbb{E} [\operatorname{Crt}_{N}\left(\rz,  (0,\rho_*-\eps)  \right)]+\mathbb{E} [\operatorname{Crt}_{N}\left(\rz,  (\rho_*+\eps,\8)]  \right)]
\stackrel{N\to\8}\longrightarrow 0.
\end{align*}
We have proved \eqref{hh3} and (\ref{q10}).


Let us now turn to (\ref{qq10}). Define
$$
 \Lambda_{\varepsilon}=\left\{x \in \mathbb{R}^{N}:L_{N}\left(\nabla^{2}H_{N}(x)\right)\notin B\left(\si_{c_s,r_s}, \varepsilon\right)\right\}, $$
where we recall $c_s=\mu+\frac{4B''}{\mu}$ and $r_s=4\sqrt{B''}$.
Arguing as in \pref{eq:kr1} and using the same notations as there, we find 
\begin{align*}
&\mathbb{E}\left[\#\left\{x \in  \Lambda_{\varepsilon}: \nabla H_{N}(x)=0\right\}\right]
=\int_{\mathbb{R}^{N}} p_{\nabla H_{N}(x)}(0) \mathbb{E}\left[\left|\operatorname{det}\left(\nabla^{2} H_{N}(x)\right)\right| \indi_{\Lambda_{\varepsilon}}(x)\right]{\dd} x\nonumber\\
&=S_{N-1} N^{(N-1) / 2} \int_{0}^{\infty} \int_{-\infty}^{\infty} \mathbb{E}[|\operatorname{det} W^{N}|\indi_{\left\{L_{N}\left(W^{N}\right)\notin B\left(\si_{c_s,r_s}, \varepsilon\right)\right\}}]\nonumber\\
&\quad\quad\quad\quad\quad\quad\quad\quad\quad\quad\quad\quad\times p_{\nabla H_{N}(x)}(0) f_{\frac{1}{N}H_{N}(x)}(u) \rho^{N-1} \dd  u \dd  \rho.
\end{align*}
Recalling $W^{N}\stackrel{d}{=}\sqrt{8B^{\prime\prime}}(\GOE_{N}+\eta I_{N})$ and conditioning on $\eta=t$, rescaling the semicircle measure yields
\begin{align*}
    &\ez[\#\{ x \in  \Lambda_{\varepsilon}: \nabla H_{N}(x)=0\}] \\
    &= S_{N-1} N^{(N-1) / 2} (8B^{\prime\prime})^{N/2} \int_{0}^{\infty} \int_{-\infty}^{\infty} \int_{-\8}^\8 g_{\eta}(t) f_{\frac{1}{N}H_{N}(x)}(u) p_{\nabla H_{N}(x)}(0) \rho^{N-1}\\
    &\qquad \times\ez[|\det(\GOE_N+ tI_N) \indi_{\{ d(L_N(\GOE_N+ tI_N), ~\si_{c_s',r_s'})>\eps' \}} |]  \dd  t \dd  u \dd  \rho \\
    &=J_{N}^{1}+J_{N}^{2},
\end{align*}
where the metric $d$ is the bounded Lipschitz metric given as in \pref{eq:measd}, $g_\eta(t)$ is given as in \pref{eq:etapdf}, $\si_{c_s^{\prime},r_s^{\prime}}$ is  the semicircle measure with center $c_s^{\prime}=\frac{1}{\sqrt{2}}\left(\frac{\mu}{\sqrt{4 B^{\prime \prime}}}+\frac{\sqrt{4B^{\prime \prime}}}{\mu}\right)=-y_*$ and radius $r_s^{\prime}=\sqrt{2}$, $\eps'=\frac{\eps}{\sqrt{8B''}}$ and
\begin{align}
\label{qq2}
J_{N}^{1}&=S_{N-1} N^{(N-1)/2}(8B^{\prime\prime})^{N/2}\int_{0}^{\infty} \int_{-\infty}^{\infty}\int_{y_{*}-\frac{\varepsilon'}{2}}^{y_{*}+\frac{\varepsilon'}{2}} g_{\eta}(t)p_{\nabla H_{N}(x)}(0) f_{\frac{1}{N}H_{N}(x)}(u) \rho^{N-1}\\
 &\quad\times\mathbb{E}[|\operatorname{det}(\GOE_{N}+t I_{N})|\indi_{\{d(L_N(\GOE_N+ tI_N), ~\si_{c_s',r_s'})>\eps'\}}]\dd  t \dd  u \dd  \rho,\nonumber\\
J_{N}^{2}&=S_{N-1}N^{(N-1)/2}(8B^{\prime\prime})^{N/2}\int_{0}^{\infty}\int_{-\infty}^{\infty}
\int_{[y_{*}-\frac{\varepsilon'}{2}, y_{*}+\frac{\varepsilon'}{2}]^c} g_{\eta}(t)p_{\nabla H_{N}(x)}(0)f_{\frac{1}{N}H_{N}(x)}(u) \rho^{N-1}\nonumber\\
 &\quad\times\mathbb{E}[|\operatorname{det}(\GOE_{N}+t I_{N})|\indi_{\{d(L_N(\GOE_N+ tI_N), ~\si_{c_s',r_s'})>\eps'\}}] \dd  t \dd  u \dd  \rho \notag.
\end{align}
We claim that both $J_{N}^{1}$ and $J_{N}^{2}$ converge to 0 as $N$ goes to $\infty$, which together with Markov's inequality results in
\begin{align}
    \label{qz8}
    \mathbb{P}\left(x^{*} \in  \Lambda_{\varepsilon}\right) \leq \mathbb{E}\left[\# \left\{x \in  \Lambda_{\varepsilon}: \nabla H_{N}(x)=0\right\} \right] \stackrel{N \rightarrow \infty}{\longrightarrow} 0,
\end{align}
and \eqref{qq10} follows.

To prove the claim, we first observe that
\begin{align}
\label{mn}
J_{N}^{2}&\leq S_{N-1}N^{(N-1)/2}(8B^{\prime\prime})^{N/2}\int_{0}^{\infty}\int_{-\infty}^{\infty}
\int_{[y_{*}-\frac{\varepsilon'}{2}, y_{*}+\frac{\varepsilon'}{2}]^c} \mathbb{E}[|\operatorname{det}(\GOE_{N}+t I_{N})|]\\
 &\quad\times g_{\eta}(t)p_{\nabla H_{N}(x)}(0) f_{\frac{1}{N}H_{N}(x)}(u) \rho^{N-1}\dd  t \dd  u \dd  \rho \notag.
\end{align}
By Lemma \ref{le:stech} and \pref{eq:snlim}, we get
\begin{align}
\label{pp4}
\limsup_{N \rightarrow \infty} \frac{1}{N} \log J_{N}^{2}\leq\frac{1}{2} \log \left(8 B^{\prime \prime}\right)-\frac{1}{2} \log \left(-2B^{\prime}\right)+\frac{1}{2}+\sup _{(\rho, u, y) \in F} \psi(\rho, u, y),
\end{align}
where $F=\left\{(\rho, u, y): y \in \left[y_*-\frac{\varepsilon'}{2},y_*+\frac{\varepsilon'}{2}\right]^{c}, \rho \in\left(0, \infty\right), u \in \mathbb{R}\right\}$.
Then Lemma \ref{x-11} implies that the value of $\sup _{(\rho, u, y) \in F} \psi(\rho, u, y)$ is  strictly smaller than
$$\psi\left(\rho_{*}, u_{*}, y_{*}\right)=-\log \sqrt{4 B^{\prime \prime}}+\frac{1}{2} \log (-2B^{\prime})-\frac{1}{2}-\frac{1}{2}\log 2.$$
Combining this with (\ref{pp4}) yields that
$
\limsup_{N \rightarrow \infty} \frac{1}{N} \log J_{N}^{2}<0.
$
It follows that
\begin{align}
\label{hh2}
\lim _{N \rightarrow \infty} J_{N}^{2}=0.
\end{align}

Let us turn to $J_{N}^{1}$. Note that for $|t-y_*|\leq\frac{\varepsilon'}{2}$ and a function $h$ with $\|h\|_{L} \leq 1$,
\begin{align*}
\Big|\frac{1}{N} \sum_{i=1}^{N} h(\lambda_{i}(\GOE_{N})+t-y_*)-\frac{1}{N} \sum_{i=1}^{N} h(\lambda_{i}(\GOE_{N}))\Big|\leq\frac{\varepsilon'}{2}.
\end{align*}
By the triangle inequality,
\begin{align*}
    \eps'&< d(L_N(\GOE_N+ tI_N), \si_{c_s',r_s'}) = d(L_N(\GOE_N+ (t-y_*) I_N), \si_{c_s'+y_*,r_s'})\\
    &\le d(L_N(\GOE_N+ (t-y_*) I_N), L_N(\GOE_N)) +d(L_N(\GOE_N), \si_{\rm sc}),
\end{align*}
which implies $d(L_N(\GOE_N), \si_{\rm sc})>\frac{\eps'}2$. Combining these with the Cauchy--Schwarz inequality, we deduce
\begin{align}
\label{qq7}
J_{N}^{1}&\leq S_{N-1} N^{(N-1)/2}(8B^{\prime\prime})^{N/2}\int_{0}^{\infty} \int_{-\infty}^{\infty}\int_{y_*-\frac{\varepsilon'}{2}}^{y_*+\frac{\varepsilon'}{2}}g_{\eta}(t)p_{\nabla H_{N}(x)}(0) f_{\frac{1}{N}H_{N}(x)}(u) \rho^{N-1}\\
 &\quad\times\mathbb{E}[|\operatorname{det}(\GOE_{N}+t I_{N})|\indi_{\left\{(L_{N}\left(\GOE_{N}\right)\notin B\left(\si_{\rm{sc}}, \frac{\varepsilon'}{2}\right)\right\}}]\dd  t \dd  u \dd  \rho
 \nonumber\\
&\leq S_{N-1} N^{(N-1)/2}(8B^{\prime\prime})^{N/2}\int_{0}^{\infty} \int_{-\infty}^{\infty}\int_{y_*-\frac{\varepsilon'}{2}}^{y_*+\frac{\varepsilon'}{2}}g_{\eta}(t)p_{\nabla H_{N}(x)}(0) f_{\frac{1}{N}H_{N}(x)}(u) \rho^{N-1}\nonumber\\
 &\quad\times\mathbb{E}\left[|\operatorname{det}(\GOE_{N}+t I_{N})|^2\right]^{\frac{1}{2}}\mathbb{P}\left(L_{N}\left(\GOE_{N}\right)\notin B\left(\si_{\rm{sc}}, \frac{\varepsilon'}{2}\right)\right)^{\frac{1}{2}}\dd  t \dd  u \dd  \rho \notag.
\end{align}
Using (\ref{ac1}) and  (\ref{ac2}), and recalling the value of $y_*$ as in \eqref{219}, we have for large $N$
\begin{align}
\label{ac5}
&\mathbb{E}\left[|\operatorname{det}(\GOE_{N}+t I_{N})|^2\right]^{\frac{1}{2}}\mathbb{P}\left(L_{N}\left(\GOE_{N}\right)\notin B\left(\si_{\rm{sc}}, \frac{\varepsilon'}{2}\right)\right)^{\frac{1}{2}}\\
&\leq 2^{N}\mathbb{E}\left[\lambda^{*}\left(\GOE_N\right)^{2N}+t^{2N}\right]^{\frac{1}{2}}e^{-\frac{1}{2}c N^{2}}\nonumber\\
&\leq C_{\mu, B}^Ne^{-\frac{1}{2}c N^{2}} \notag.
\end{align}
Plugging this into (\ref{qq7}) and recalling \pref{eq:gradhnpdf}--\pref{eq:etapdf}, when $N$ is large, we find
\begin{align}
\label{ac3}
J_{N}^{1}&\leq S_{N-1} N^{(N-1)/2}(8B^{\prime\prime})^{N/2}C_{\mu, B}^Ne^{-\frac{1}{2}c N^{2}}\int_{0}^{\infty} \int_{-\infty}^{\infty}\int_{y_*-\frac{\varepsilon'}{2}}^{y_*+\frac{\varepsilon'}{2}}\sqrt{\frac{N}{2\pi B}}e^{\frac{-N(u-\frac{\mu\rho^{2}}{2})^2}{2B}} \rho^{N-1}\\
 &\quad\times\sqrt{\frac{NBB^{\prime\prime}}{\pi(BB^{\prime\prime}-B^{\prime2})}}e^{\frac{-NBB^{\prime\prime}(t-\frac{\mu+\frac{2B^{\prime}}{B}(u-\frac{\mu\rho^{2}}{2})}{\sqrt{8B^{\prime\prime}}})^2}{BB^{\prime\prime}-B^{\prime2}}}\frac{1}{(2 \pi)^{N/2} (-2B^{\prime})^{N / 2}} e^{-\frac{N\mu^{2}\rho^{2}}{2 (-2B^{\prime})}} \dd  t \dd  u \dd  \rho\nonumber\\
&\leq S_{N-1} N^{(N-1)/2} C_{\mu, B}^Ne^{-\frac{1}{2}c N^{2}}\int_{0}^{\infty} \int_{-\infty}^{\infty} e^{\frac{-N(u-\frac{\mu\rho^{2}}{2})^2}{2B}} \rho^{N-1} e^{-\frac{N\mu^{2}\rho^{2}}{2 (-2B^{\prime})}}  \dd  u \dd  \rho\nonumber\\
&\le S_{N-1} N^{(N-1)/2} C_{\mu, B}^Ne^{-\frac{1}{2}c N^{2}} \int_{0}^{\infty}   e^{-\frac{N\mu^{2}\rho^{2}}{ -8 B^{\prime}}}  \dd  \rho  \notag,
\end{align}
where the second inequality follows from the fact $e^{\frac{-NBB^{\prime\prime}(t-\frac{\mu+\frac{2B^{\prime}}{B}(u-\frac{\mu\rho^{2}}{2})}{\sqrt{8B^{\prime\prime}}})^2}{BB^{\prime\prime}-B^{\prime2}}}\leq 1$.
Recalling \pref{eq:snlim}, we arrive at
$$
\limsup_{N \rightarrow \infty} \frac{1}{N} \log J_{N}^{1}=-\infty,
$$
which proves that $\lim_{N\to \8}J_{N}^{1}=0$. 
\end{proof}

To prove (\ref{xx8}), we need the following result due to Belius, \v{C}ern\'{y}, Nakajima and Schmidt.
\begin{lemma}[{\cite[Lemma 6.4]{Be22}}]
\label{x-13}
For all $\delta>0$ we have for $N$ large enough and $\sqrt{2}+\delta \leq x \leq \delta^{-1}$,
\begin{align*}
\mathbb{E}\left[\left|\operatorname{det}\left(x  I_{N}+\GOE_N\right)\right|^{2}\right] \leq e^{\delta N}\mathbb{E}\left[\left|\operatorname{det}\left(x  I_{N}+\GOE_N\right)\right|\right]^{2} .
\end{align*}
\end{lemma}
\begin{proof}[Proof of (\ref{xx8})]
Recall that $c_s=\mu+\frac{4B''}{\mu}$ and $r_s=4\sqrt{B''}$. For any $\varepsilon>0$, we define
$$
\Omega_{\varepsilon}=\left\{x \in \mathbb{R}^{N}:\left|\lambda_{\min }\left(\nabla^{2} H_{N}(x)\right)+r_s-c_s\right|>\sqrt{8B^{\prime\prime}}\varepsilon\right\},
$$
which is a random subset of $\mathbb{R}^{N}$.
Arguing as in \pref{eq:kr1} and \eqref{pp3}, and using the same notations as there, we get
\begin{align}
&\mathbb{E}\left[\#\left\{x \in \Omega_{\varepsilon}: \nabla H_{N}(x)=0\right\} \right]\label{xx6} \\
&=S_{N-1} N^{(N-1) / 2} \int_{0}^{\infty} \int_{-\infty}^{\infty} \mathbb{E}[|\operatorname{det} W^{N}|\indi_{\left\{\left|\lambda_{\min }\left(W^{N}\right)+r_s-c_s\right|>\sqrt{8B^{\prime\prime}}\varepsilon\right\}}]\nonumber \\
&\quad\quad\quad\quad\quad\quad\quad\quad\quad \times p_{\nabla H_{N}(x)}(0) f_{\frac{1}{N}H_{N}(x)}(u) \rho^{N-1} \dd  u \dd  \rho\notag\\
&=S_{N-1} N^{(N-1)/2}(8B^{\prime\prime})^{N/2}\int_{0}^{\infty} \int_{-\infty}^{\infty}\int_{y_*-\frac{\varepsilon}{2}}^{y_*+\frac{\varepsilon}{2}}g_{\eta}(t)p_{\nabla H_{N}(x)}(0) f_{\frac{1}{N}H_{N}(x)}(u) \rho^{N-1}\notag\\
 &\quad\times\mathbb{E}[|\operatorname{det}(\GOE_{N}+t I_{N})|\indi_{\left\{\left|\lambda_{\min }\left(\GOE_{N}\right)+t+\sqrt{2}-y_*\right|>\varepsilon\right\}}]\dd  t \dd  u \dd  \rho\nonumber\\
 &\quad+S_{N-1}N^{(N-1)/2}(8B^{\prime\prime})^{N/2}\int_{0}^{\infty}\int_{-\infty}^{\infty}
 \left(\int_{-\infty}^{y_*-\frac{\varepsilon}{2}}+\int_{y_*+\frac{\varepsilon}{2}}^{\infty}\right)g_{\eta}(t)p_{\nabla H_{N}(x)}(0) f_{\frac{1}{N}H_{N}(x)}(u) \nonumber\\
 &\quad\times\mathbb{E}[|\operatorname{det}(\GOE_{N}+t I_{N})|\indi_{\left\{\left|\lambda_{\min }\left(\GOE_{N}\right)+t+\sqrt{2}-y_*\right|>\varepsilon\right\}}]\rho^{N-1} \dd  t \dd  u \dd  \rho\nonumber\\
 &=: I_{N}^{1}+I_{N}^{2} \notag,
\end{align}
where $y_*=-\frac{1}{\sqrt{2}}\left(\frac{\mu}{\sqrt{4 B^{\prime \prime}}}+\frac{\sqrt{4B^{\prime \prime}}}{\mu}\right)$ is defined in (\ref{219}). We claim that both $I_{N}^{1}$ and $I_{N}^{2}$ converge to 0 as $N\to\8$, which implies that
\begin{align}
\label{hh1}
\lim _{N \rightarrow \infty}\mathbb{E}\left[\#\left\{x \in \Omega_{\varepsilon}: \nabla H_{N}(x)=0\right\}\right]=0,
\end{align}
and \eqref{xx8} follows from Markov's inequality.

We first consider $I_{N}^{2}$.  It is clear that
\begin{align}
\label{qq4}
I_{N}^{2}&\leq S_{N-1}N^{(N-1)/2}(8B^{\prime\prime})^{N/2}\int_{0}^{\infty}\int_{-\infty}^{\infty}
 \left(\int_{-\infty}^{y_*-\frac{\varepsilon}{2}}+\int_{y_*+\frac{\varepsilon}{2}}^{\infty}\right) \mathbb{E}[|\operatorname{det}(\GOE_{N}+t I_{N})|]\\
 &\quad\times g_{\eta}(t)p_{\nabla H_{N}(x)}(0) f_{\frac{1}{N}H_{N}(x)}(u) \rho^{N-1}\dd  t \dd  u \dd  \rho \nonumber.
\end{align}
Following the same argument as for $J_{N}^{2}$ in (\ref{mn}), we can show that $\lim_{N\to\8} I_{N}^{2}= 0$ with the help of \pref{le:stech}.

For the integral $I_{N}^{1}$, since for any $t\in\left[y_*-\frac{\varepsilon}{2},y_*+\frac{\varepsilon}{2}\right]$, by the triangle inequality $\left|\lambda_{\min }\left(\GOE_{N}\right)+t+\sqrt{2}-y_*\right|>\varepsilon$ implies that $\left|\lambda_{\min }\left(\GOE_{N}\right)+\sqrt{2}\right|>\frac{\varepsilon}{2}$, it follows that
\begin{align}
\label{pp8}
I_{N}^{1}&\leq S_{N-1} N^{(N-1)/2}(8B^{\prime\prime})^{N/2}\int_{0}^{\infty} \int_{-\infty}^{\infty}\int_{y_*-\frac{\varepsilon}{2}}^{y_*+\frac{\varepsilon}{2}}g_{\eta}(t)p_{\nabla H_{N}(x)}(0) f_{\frac{1}{N}H_{N}(x)}(u) \rho^{N-1}\\
 &\quad\times\mathbb{E}[|\operatorname{det}(\GOE_{N}+t I_{N})|\indi_{\left\{\left|\lambda_{\min }\left(\GOE_{N}\right)+\sqrt{2}\right|>\frac{\varepsilon}{2}\right\}}]\dd  t \dd  u \dd  \rho
 \nonumber\\
&\leq S_{N-1} N^{(N-1)/2}(8B^{\prime\prime})^{N/2}\int_{0}^{\infty} \int_{-\infty}^{\infty}\int_{y_*-\frac{\varepsilon}{2}}^{y_*+\frac{\varepsilon}{2}}g_{\eta}(t)p_{\nabla H_{N}(x)}(0) f_{\frac{1}{N}H_{N}(x)}(u) \rho^{N-1}\nonumber\\
 &\quad\times\mathbb{E}\left[|\operatorname{det}(\GOE_{N}+t I_{N})|^2\right]^{\frac{1}{2}}\mathbb{P}\left(\left|\lambda_{\min }\left(\GOE_{N}\right)+\sqrt{2}\right|>\frac{\varepsilon}{2}\right)^{\frac{1}{2}}\dd  t \dd  u \dd  \rho \nonumber,
\end{align}
where the last inequality follows from the Cauchy--Schwarz inequality. Since $\mu>\sqrt{4 B^{\prime \prime}}$, we can find $\varepsilon$ small enough such that $-\frac{1}{\sqrt{2}}\left(\frac{\mu}{\sqrt{4 B^{\prime \prime}}}+\frac{\sqrt{4B^{\prime \prime}}}{\mu}\right)+\frac{\varepsilon}{2}<-\sqrt{2}$. Thus, by Lemma \ref{x-13}, for all $0<\frac{\de'}2<-\sqrt2- (y_*+\frac\eps2)$ we have for $N$ large enough and all $t \in\left[y_*-\frac{\varepsilon}{2},y_*+\frac{\varepsilon}{2}\right]$,
\begin{align}
\label{pp5}
\mathbb{E}\left[\left|\operatorname{det}\left(t  I_{N}+\GOE_{N}\right)\right|^{2}\right] \leq e^{\frac{\delta' N}{2}} \mathbb{E}\left[\left|\operatorname{det}\left(t  I_{N}+\GOE_{N}\right)\right|\right]^{2}.
\end{align}
On the other hand, by (\ref{pp6}), for large enough $N$ we can find some $\delta>0$ such that
\begin{align}
\label{pp7}
\mathbb{P}\left(\left|\lambda_{\min }\left(\GOE_N\right)+\sqrt{2}\right|>\frac{\varepsilon}{2}\right) \leq e^{-\delta N}.
\end{align}
Plugging (\ref{pp5}) and (\ref{pp7}) into (\ref{pp8}), since we can always choose $\de'\le \de$, we deduce that for all $N$ large enough,
\begin{align}
\label{pp9}
I_{N}^{1}&\leq e^{\frac{-N \delta} {4}} S_{N-1} N^{(N-1)/2}(8B^{\prime\prime})^{N/2}\int_{0}^{\infty} \int_{-\infty}^{\infty}\int_{y_*-\frac{\varepsilon}{2}}^{y_*+\frac{\varepsilon}{2}}\mathbb{E}\left[|\operatorname{det}(\GOE_{N}+t I_{N})|\right]\\
 &\quad\times g_{\eta}(t)p_{\nabla H_{N}(x)}(0) f_{\frac{1}{N}H_{N}(x)}(u) \rho^{N-1}\dd  t \dd  u \dd  \rho
 \nonumber\\
&\leq e^{\frac{-N \delta} {4}} S_{N-1} N^{(N-1)/2}(8B^{\prime\prime})^{N/2}\int_{0}^{\infty} \int_{-\infty}^{\infty}\int_{-\infty}^{\infty}\nonumber\\
 &\quad\times\mathbb{E}\left[|\operatorname{det}(\GOE_{N}+t I_{N})|\right]g_{\eta}(t)p_{\nabla H_{N}(x)}(0) f_{\frac{1}{N}H_{N}(x)}(u) \rho^{N-1}\dd  t \dd  u \dd  \rho
 \nonumber\\
&=e^{\frac{-N \delta} {4}}\mathbb{E} [\operatorname{Crt}_{N}\left(\mathbb{R}, \mathbb{R}^{N}\right)]\nonumber,
\end{align}
where the last equality can be derived from (\ref{pp3}). Together with Proposition \ref{p-2} (i), we arrive at
$\lim _{N \rightarrow \infty} I_{N}^{1}=0,$  which completes the proof of the claim.
\end{proof}

\section{Topology trivialization regime for LRC fields}\label{se:toptriv}

In this section, we determine the topology trivialization regime for LRC fields. First, we recall the covariance structure of derivatives of $H_N$. 
\begin{lemma}[{\cite[Lemma A.1]{AZ20}}]
\label{x-1}
 Assume Assumptions I and II. Then for $x \in \mathbb{R}^{N}$,
$$
\begin{aligned}
\operatorname{Cov}\left[H_{N}(x), H_{N}(x)\right] &=ND\left(\frac{\|x\|^{2}}{N}\right), \\
\operatorname{Cov}\left[H_{N}(x), \partial_{i} H_{N}(x)\right] &=D^{\prime}\left(\frac{\|x\|^{2}}{N}\right) x_{i}, \\
\operatorname{Cov}\left[\partial_{i} H_{N}(x), \partial_{j} H_{N}(x)\right] &=D^{\prime}(0) \delta_{i j}, \\
\operatorname{Cov}\left[H_{N}(x), \partial_{i j} H_{N}(x)\right] &=2 D^{\prime \prime}\left(\frac{\|x\|^{2}}{N}\right) \frac{x_{i} x_{j}}{N}+\left[D^{\prime}\left(\frac{\|x\|^{2}}{N}\right)-D^{\prime}(0)\right] \delta_{i j}, \\
\operatorname{Cov}\left[\partial_{k} H_{N}(x), \partial_{i j} H_{N}(x)\right] &=0, \\
\operatorname{Cov}\left[\partial_{l k} H_{N}(x), \partial_{i j} H_{N}(x)\right] &=-2 D^{\prime \prime}(0)\left[\delta_{j l} \delta_{i k}+\delta_{i l} \delta_{k j}+\delta_{k l} \delta_{i j}\right] / N.
\end{aligned}
$$
\end{lemma}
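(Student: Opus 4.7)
The plan is to derive everything from the two-point covariance function of $X_N$ and then observe that adding the deterministic quadratic $\frac{\mu}{2}\|x\|^2$ does not affect any covariance involving $H_N$ and its derivatives. First, using Assumption II ($X_N(0)=0$) together with the defining property $\mathbb{E}[(X_N(x)-X_N(y))^2] = ND(\|x-y\|^2/N)$, I would apply the polarization identity to obtain
\begin{align*}
C(x,y) := \mathrm{Cov}[X_N(x),X_N(y)] = \frac{N}{2}\Big[D\Big(\tfrac{\|x\|^2}{N}\Big) + D\Big(\tfrac{\|y\|^2}{N}\Big) - D\Big(\tfrac{\|x-y\|^2}{N}\Big)\Big],
\end{align*}
exploiting that $\mathbb{E}[X_N(x)^2] = \mathbb{E}[(X_N(x)-X_N(0))^2] = ND(\|x\|^2/N)$ under pinning. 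This already yields the first identity on setting $x=y$.

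Next, invoking the standard fact that for jointly Gaussian smooth fields the covariance of partial derivatives equals the corresponding partial derivatives of the covariance function (justified by Assumption I, which gives the required $C^2$ regularity, cf.~\cite{AT07}), I would compute
\begin{align*}
\mathrm{Cov}[\partial_iX_N(x),X_N(y)] = \partial_{x_i}C(x,y), \qquad \mathrm{Cov}[\partial_iX_N(x),\partial_jX_N(y)] = \partial_{x_i}\partial_{y_j}C(x,y),
\end{align*}
and similarly for higher mixed partials, then specialize to $x=y$. Chain-rule computation of $\partial_{y_i}C$ and $\partial_{y_i}\partial_{y_j}C$ delivers the second, third, and fourth identities; in particular, $D(\|y\|^2/N)$ produces the first term and $D(\|x-y\|^2/N)$ produces the $D'(0)\delta_{ij}$ and $-D'(0)\delta_{ij}$ corrections at $x=y$.

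For the last two identities, the terms $D(\|x\|^2/N)$ and $D(\|y\|^2/N)$ are killed by any mixed $\partial_{x_k}\partial_{y_i}$ operator, so only the piece $-\tfrac{N}{2}D(\|x-y\|^2/N)$ contributes. Writing $z = x-y$ and setting $g(z) = -\tfrac{N}{2}D(\|z\|^2/N)$, the required covariances become $\partial_{z_k}\partial_{z_i}\partial_{z_j}g(0)$ and $\partial_{z_l}\partial_{z_k}\partial_{z_i}\partial_{z_j}g(0)$ (up to an overall sign from the mixed-variable chain rule, which ultimately disappears because four minus signs from $\partial_{y}$'s versus $\partial_{z}$'s combine evenly). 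The former vanishes by a parity argument, since every surviving term at $z=0$ would need to carry an odd total power of the $z_i$'s. The latter produces, after careful bookkeeping of the $2/N$ chain-rule factors and of how each Kronecker $\delta$ arises from differentiating a $z_i$ factor, exactly $-\tfrac{2}{N}D''(0)[\delta_{il}\delta_{jk} + \delta_{ik}\delta_{jl} + \delta_{kl}\delta_{ij}]$.

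The main obstacle is purely bookkeeping: keeping track of the three ways a $z_i$ factor can be consumed when differentiating an expression like $D''(\|z\|^2/N)z_iz_j$ and ensuring all terms carrying residual $z$'s are correctly discarded at $z=0$. No probabilistic subtlety arises beyond the standard derivative-of-Gaussian identity; and the final passage from $X_N$ to $H_N$ is immediate since the added quadratic term is deterministic and contributes only to means.
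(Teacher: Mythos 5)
The paper itself does not give a proof of this lemma; it simply cites \cite{AZ20}*{Lemma A.1}. Your argument is the standard one for computing covariances of derivatives of a Gaussian field with isotropic increments, and it is correct: the polarization identity (under Assumption II, so that $\mathbb{E}[X_N(x)^2]=ND(\|x\|^2/N)$ and $D(0)=0$) produces the two-point covariance $C(x,y)=\tfrac{N}{2}[D(\|x\|^2/N)+D(\|y\|^2/N)-D(\|x-y\|^2/N)]$, then differentiating $C$ in $x$- and $y$-variables and setting $x=y$ yields each line; the deterministic quadratic in $H_N$ only shifts means and drops out of all covariances. One small slip: in passing from $\partial_{y}$'s to $\partial_{z}$'s in the mixed third- and fourth-order derivatives you invoke ``four minus signs,'' but in both $\mathrm{Cov}[\partial_k,\partial_{ij}]$ and $\mathrm{Cov}[\partial_{lk},\partial_{ij}]$ exactly two $y$-derivatives appear, so there are two minus signs; since $(-1)^2=1$ and since the third-order derivative of an even function of $z$ vanishes at $z=0$ regardless of sign, the conclusion is unaffected, but the bookkeeping remark is off.
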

The following lemma is a slight variant of the classical Laplace method.
\begin{lemma}
\label{x-3}
Suppose $f(x)$ and $g(x)$ are both twice continuously differentiable functions on $[a, b], a,b\in\rz$. Let $h(x)=f(x)+g(x)$. If  there exists a unique point $x_{0} \in(a, b)$ such that $h\left(x_{0}\right)=\max _{x \in[a, b]} h(x)$ and $ h^{\prime \prime}\left(x_{0}\right)<0$,
then
\begin{align*}
\lim _{n \rightarrow \infty} \frac{\int_{a}^{b} e^{n f(x)+\sqrt{n(n-1)}g(x)} \dd x}{e^{ \left(nf(x_{0})+\sqrt{n(n-1)}g(x_0)\right)} \sqrt{\frac{2 \pi}{n\left(-h^{\prime \prime}\left(x_{0}\right)\right)}}}=1.
\end{align*}
\end{lemma}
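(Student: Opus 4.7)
The plan is to apply the standard Laplace method, treating $\phi_n(x) := n f(x) + \sqrt{n(n-1)}\, g(x)$ as a small perturbation of $n h(x)$. The key observation is that $\sqrt{n(n-1)} = n - \frac{1}{2} + O(1/n)$, so $\phi_n(x) = n h(x) - \tfrac{1}{2} g(x) + o(1)$ uniformly on $[a,b]$, but more importantly $\phi_n''(x)/n \to h''(x)$ and the first-order effect of the mismatch between $n$ and $\sqrt{n(n-1)}$ vanishes at the relevant scale $|x-x_0|\asymp n^{-1/2}$.

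First I would localize the integral. Fix a small $\delta>0$ so that $(x_0-\delta,x_0+\delta)\subset(a,b)$. Since $h$ is continuous and attains its unique maximum at $x_0$, there exists $\eta>0$ with $h(x)\le h(x_0)-\eta$ for $x\in[a,b]\setminus(x_0-\delta,x_0+\delta)$. On this complement,
\begin{equation*}
\phi_n(x)-\phi_n(x_0) = n\bigl(h(x)-h(x_0)\bigr) + \bigl(\sqrt{n(n-1)}-n\bigr)\bigl(g(x)-g(x_0)\bigr) \le -\tfrac{n\eta}{2}
\end{equation*}
for large $n$, since the second term is $O(1)$ by boundedness of $g$. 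Hence the outer contribution is $O(e^{-n\eta/2})$, exponentially smaller than the target $n^{-1/2}$ factor, and can be discarded.

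Next I would Taylor expand around $x_0$. Using $h'(x_0)=0$, i.e. $f'(x_0)=-g'(x_0)$,
\begin{equation*}
\phi_n(x)-\phi_n(x_0) = \bigl(\sqrt{n(n-1)}-n\bigr) g'(x_0)(x-x_0) + \tfrac{1}{2}\bigl(n f''(x_0)+\sqrt{n(n-1)}\,g''(x_0)\bigr)(x-x_0)^2 + R_n(x),
\end{equation*}
where $R_n(x) = o\bigl(n(x-x_0)^2\bigr)$ uniformly for $x$ in a small enough neighborhood of $x_0$, using $C^2$ smoothness of $f,g$. Changing variables $y=\sqrt{n}(x-x_0)$ and noting that $\sqrt{n(n-1)}-n\to-\tfrac12$, $\frac{1}{n}(n f''(x_0)+\sqrt{n(n-1)}\,g''(x_0))\to h''(x_0)$, and $R_n(x_0+y/\sqrt{n})\to 0$ for fixed $y$, the rescaled integrand converges pointwise to $\exp\bigl(\tfrac12 h''(x_0) y^2\bigr)$ (the linear term contributes $O(y/\sqrt{n})\to 0$).

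Finally I would apply dominated convergence. Since $h''$ is continuous and $h''(x_0)<0$, one can shrink $\delta$ so that $\phi_n''(x)/n\le \tfrac12 h''(x_0)<0$ uniformly on $(x_0-\delta,x_0+\delta)$ for large $n$, giving the pointwise bound $\phi_n(x_0+y/\sqrt{n})-\phi_n(x_0)\le \tfrac14 h''(x_0)y^2 + C|y|/\sqrt{n}$, which is dominated by an integrable Gaussian for large $n$. Thus
\begin{equation*}
\frac{1}{\sqrt{n}}\int_{|y|<\delta\sqrt{n}} e^{\phi_n(x_0+y/\sqrt{n})-\phi_n(x_0)}\,dy \longrightarrow \frac{1}{\sqrt{n}}\int_{\mathbb{R}} e^{\frac{1}{2}h''(x_0)y^2}dy \cdot \sqrt{n} \cdot \frac{1}{\sqrt{n}} = \sqrt{\frac{2\pi}{n(-h''(x_0))}},
\end{equation*}
and combining this with the exponentially small outer contribution yields the claimed asymptotic. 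The only mildly delicate point is the bookkeeping around the linear term $(\sqrt{n(n-1)}-n)g'(x_0)(x-x_0)$: it shifts the true maximizer of $\phi_n$ away from $x_0$ by $O(1/n)$, but since the change of variables scales by $\sqrt{n}$, this linear contribution vanishes in the limit, and one does not need to relocate the expansion point.
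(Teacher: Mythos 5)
Your proposal is correct and arrives at the same conclusion, but the execution differs from the paper in a worthwhile way. The paper's proof never performs the change of variables $y=\sqrt{n}(x-x_0)$ or invokes dominated convergence; instead it uses the algebraic identity
\[
n f(x)+\sqrt{n(n-1)}\,g(x)-\bigl(n f(x_0)+\sqrt{n(n-1)}\,g(x_0)\bigr)
= n\bigl(h(x)-h(x_0)\bigr)-\tfrac{\sqrt{n}}{\sqrt{n}+\sqrt{n-1}}\bigl(g(x)-g(x_0)\bigr),
\]
so that the mismatch between $n$ and $\sqrt{n(n-1)}$ is isolated into a single term with a bounded coefficient tending to $1/2$. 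The paper then just uses continuity of $g$ (no Taylor expansion of $g$) to make this perturbation $\varepsilon$-small near $x_0$, and runs an $\varepsilon$-$\delta$ squeeze with explicit Gaussian bounds on $h$. Your version instead Taylor-expands $f$ and $g$ separately, uses $f'(x_0)=-g'(x_0)$ to see that only the $O(1)$ coefficient $\sqrt{n(n-1)}-n$ survives on the linear term, and then kills it in the $y=\sqrt{n}(x-x_0)$ scaling via dominated convergence. Both arguments rest on the same key observation that $\sqrt{n(n-1)}-n$ is bounded; the paper's route is a bit more elementary (no DCT, no uniform Taylor remainder estimate needed), while yours is more in line with the standard textbook Laplace-method template and makes the negligibility of the linear term more transparent. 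One cosmetic remark: the final display in your sketch has an $n$-dependent quantity on the right-hand side of a limit, which should be rephrased as an asymptotic equivalence rather than a bare limit.
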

\begin{proof}
We first prove the lower bound. For any $\varepsilon>0$, since $h^{\prime \prime}$ and $g$  are both continuous, there     exists $\delta>0$ such that if $\left|x-x_{0}\right|<\delta$ then $h^{\prime \prime}(x) \geq h^{\prime \prime}\left(x_{0}\right)-\varepsilon$ and $\left|g(x)-g(x_{0})\right|<\varepsilon$. By Taylor's Theorem, for any $x \in\left(x_{0}-\delta, x_{0}+\delta\right)$, we have $h(x) \geq h\left(x_{0}\right)+\frac{1}{2}\left(h^{\prime \prime}\left(x_{0}\right)-\varepsilon\right)\left(x-x_{0}\right)^{2}$. Note that
\begin{align}
\label{34}
&n f(x)+\sqrt{n(n-1)}g(x)-(n f(x_0)+\sqrt{n(n-1)}g(x_0))\\&=n(h(x)-h(x_0))-\frac{\sqrt{n}}{\sqrt{n}+\sqrt{n-1}}(g(x)-g(x_0)) \notag.
\end{align}
Then we have the following lower bound
$$
\begin{aligned}
&\int_{a}^{b} e^{n f(x)+\sqrt{n(n-1)}g(x)} \dd x  \geq \int_{x_{0}-\delta}^{x_{0}+\delta}e^{n f(x)+\sqrt{n(n-1)}g(x)} \dd  x \\
& = e^{ nf(x_{0})+\sqrt{n(n-1)}g(x_0)} \int_{x_{0}-\delta}^{x_{0}+\delta} e^{n(h(x)-h(x_0))-\frac{\sqrt{n}}{\sqrt{n}+\sqrt{n-1}}(g(x)-g(x_0))} \dd  x.
\end{aligned}
$$
Since $\frac{\sqrt{n}}{\sqrt{n}+\sqrt{n-1}}$ converges to $\frac{1}{2}$ as $n$ tends to infinity, there exists $N$ such that if $n>N$, $|\frac{\sqrt{n}}{\sqrt{n}+\sqrt{n-1}}-\frac{1}{2}|<\varepsilon$. Hence when $n>N$, we have the lower bound of the above integral
$$
\begin{aligned}
&e^{ nf(x_{0})+\sqrt{n(n-1)}g(x_0)} \int_{x_{0}-\delta}^{x_{0}+\delta} e^{n(h(x)-h(x_0))-\frac{\sqrt{n}}{\sqrt{n}+\sqrt{n-1}}(g(x)-g(x_0))} \dd  x\\
& \geq e^{ nf(x_{0})+\sqrt{n(n-1)}g(x_0)}e^{-\varepsilon(\frac{1}{2}+\varepsilon)} \int_{x_{0}-\delta}^{x_{0}+\delta} e^{\frac{n}{2}(\left(h^{\prime \prime}\left(x_{0}\right)-\varepsilon\right)\left(x-x_{0}\right)^{2})} \dd  x\\
& = e^{ nf(x_{0})+\sqrt{n(n-1)}g(x_0)}e^{-\varepsilon(\frac{1}{2}+\varepsilon)}\frac{1}{\sqrt{n\left(-h^{\prime \prime}\left(x_{0}\right)+\varepsilon\right)}} \int_{-\delta \sqrt{n\left(-h^{\prime \prime}\left(x_{0}\right)+\varepsilon\right)}}^{\delta \sqrt{n\left(-h^{\prime \prime}\left(x_{0}\right)+\varepsilon\right)}} e^{-\frac{1}{2} y^{2}}\dd  y,
\end{aligned}
$$
where in the last step we have used a change of variable
$$
y=\sqrt{n\left(-h^{\prime \prime}\left(x_{0}\right)+\varepsilon\right)}\left(x-x_{0}\right) .
$$
Note that $h^{\prime \prime}\left(x_{0}\right)<0$. Dividing both sides of the above inequality by
$$
e^{ nf(x_{0})+\sqrt{n(n-1)}g(x_0)} \sqrt{\frac{2 \pi}{n\left(-h^{\prime \prime}\left(x_{0}\right)\right)}}
,$$
and taking the limits yield
\begin{align}
\label{1}
&\liminf_{n \rightarrow \infty} \frac{\int_{a}^{b} e^{n f(x)+\sqrt{n(n-1)}g(x)} \dd  x}{e^{ \left(nf(x_{0})+\sqrt{n(n-1)}g(x_0)\right)} \sqrt{\frac{2 \pi}{n\left(-h^{\prime \prime}\left(x_{0}\right)\right)}}} \\
&\geq \liminf _{n \rightarrow \infty} \frac{1}{\sqrt{2 \pi}} \int_{-\delta \sqrt{n\left(-h^{\prime \prime}\left(x_{0}\right)+\varepsilon\right)}}^{\delta \sqrt{n\left(-h^{\prime \prime}\left(x_{0}\right)+\varepsilon\right)}} e^{-\frac{1}{2} y^{2}} \dd  y \cdot e^{-\varepsilon(\frac{1}{2}+\varepsilon)} \sqrt{\frac{-h^{\prime \prime}\left(x_{0}\right)}{-h^{\prime \prime}\left(x_{0}\right)+\varepsilon}}\nonumber\\
&=e^{-\varepsilon(\frac{1}{2}+\varepsilon)} \sqrt{\frac{-h^{\prime \prime}\left(x_{0}\right)}{-h^{\prime \prime}\left(x_{0}\right)+\varepsilon}} \notag.
\end{align}
Since the inequality (\ref{1}) is true for arbitrary $\varepsilon$, letting $\varepsilon\rightarrow 0$, we get the lower bound
$$
\liminf_{n \rightarrow \infty} \frac{\int_{a}^{b} e^{n f(x)+\sqrt{n(n-1)}g(x)} \dd  x}{e^{ \left(nf(x_{0})+\sqrt{n(n-1)}g(x_0)\right)} \sqrt{\frac{2 \pi}{n\left(-h^{\prime \prime}\left(x_{0}\right)\right)}}} \geq 1.
$$

 Now we prove the upper bound. Let $\varepsilon$ be small enough so that
$h^{\prime \prime}\left(x_{0}\right)+\varepsilon<0$.  By the continuity of $h^{\prime \prime}$ and $g$, using Taylor's Theorem we can find $\delta>0$ so that if $\left|x-x_{0}\right|<\delta$, then $h(x) \leq h\left(x_{0}\right)+\frac{1}{2}\left(h^{\prime \prime}\left(x_{0}\right)+\varepsilon\right)\left(x-x_{0}\right)^{2}$  and $\left|g(x)-g(x_{0})\right|<\varepsilon$.
By our assumptions $[a, b]$ is a compact set, so there exists an $\eta>0$ and large enough $M$, such that if $\left|x-x_{0}\right| \geq \delta$, then $h(x) \leq h\left(x_{0}\right)-\eta$ and   $\left|g(x)-g(x_{0})\right|\leq M$.
Together with (\ref{34}), we have for $n$ large enough
\begin{align}
\label{2}
&\int_a^b e^{n f(x)+\sqrt{n(n-1)}g(x)} \dd  x \\ & = \int_{a}^{x_{0}-\delta} e^{n f(x)+\sqrt{n(n-1)}g(x)} \dd  x+\int_{x_{0}-\delta}^{x_{0}+\delta} e^{n f(x)+\sqrt{n(n-1)}g(x)} \dd  x\nonumber\\
&\quad+\int_{x_{0}+\delta}^{b} e^{n f(x)+\sqrt{n(n-1)}g(x)} \dd  x \nonumber\\
& \leq e^{ nf(x_{0})+\sqrt{n(n-1)}g(x_0)}(b-a)e^{-n\eta+(\frac{1}{2}+\varepsilon)M}\nonumber
\\&\quad +e^{ nf(x_{0})+\sqrt{n(n-1)}g(x_0)}\int_{x_{0}-\delta}^{x_{0}+\delta}e^{\varepsilon(\frac{1}{2}+\varepsilon)} e^{\frac{n}{2}(\left(h^{\prime \prime}\left(x_{0}\right)+\varepsilon\right)\left(x-x_{0}\right)^{2})}\dd  x \nonumber \\
& \leq e^{ nf(x_{0})+\sqrt{n(n-1)}g(x_0)}(b-a)e^{-n\eta+(\frac{1}{2}+\varepsilon)M}\nonumber
\\&\quad +e^{ nf(x_{0})+\sqrt{n(n-1)}g(x_0)}\int_{-\infty}^{+\infty}e^{\varepsilon(\frac{1}{2}+\varepsilon)} e^{\frac{n}{2}(\left(h^{\prime \prime}\left(x_{0}\right)+\varepsilon\right)\left(x-x_{0}\right)^{2})}\dd  x \nonumber \\
& \leq e^{ nf(x_{0})+\sqrt{n(n-1)}g(x_0)}(b-a)e^{-n\eta+(\frac{1}{2}+\varepsilon)M}\nonumber\\
&\quad+e^{ nf(x_{0})+\sqrt{n(n-1)}g(x_0)}e^{\varepsilon(\frac{1}{2}+\varepsilon)}  \sqrt{\frac{2 \pi}{n\left(-h^{\prime \prime}\left(x_{0}\right)-\varepsilon\right)}} \notag.
\end{align}
If we divide both sides of the inequality ({\ref{2}}) by
$$
e^{ nf(x_{0})+\sqrt{n(n-1)}g(x_0)} \sqrt{\frac{2 \pi}{n\left(-h^{\prime \prime}\left(x_{0}\right)\right)}}
,$$
and take limits on the above expression, then
$$
\limsup_{n \rightarrow \infty} \frac{\int_{a}^{b} e^{n f(x)+\sqrt{n(n-1)}g(x)} \dd  x}{e^{ \left(nf(x_{0})+\sqrt{n(n-1)}g(x_0)\right)} \sqrt{\frac{2 \pi}{n\left(-h^{\prime \prime}\left(x_{0}\right)\right)}}}\leq e^{\varepsilon(\frac{1}{2}+\varepsilon)} \sqrt{\frac{-h^{\prime \prime}\left(x_{0}\right)}{-h^{\prime \prime}\left(x_{0}\right)-\varepsilon}}.
$$
Letting $\varepsilon\rightarrow 0$, we get the upper bound
$$
\limsup_{n \rightarrow \infty} \frac{\int_{a}^{b} e^{n f(x)+\sqrt{n(n-1)}g(x)} \dd  x}{e^{ \left(nf(x_{0})+\sqrt{n(n-1)}g(x_0)\right)} \sqrt{\frac{2 \pi}{n\left(-h^{\prime \prime}\left(x_{0}\right)\right)}}} \leq 1.
$$
Combining this with the lower bound gives the desired result.
\end{proof}
To simplify the notation, we set
\begin{align*}
m=-\mu / \sqrt{-4 D^{\prime \prime}(0)}.
\end{align*}
For $x\in \rz$, define \begin{align*}
\sfG(x)=-\frac{1}{2}x^2+2mx,
\end{align*}
and
\begin{align*}
    \sfF(x)=\sfG(x)+\Phi(x)=-\frac{1}{2}x^2+2mx+\Phi(x),
\end{align*}
where $\Phi(x)$ is defined in (\ref{eq:phi}).

The following results will be used to reduce the Laplace asymptotics to compact sets. For this purpose, in general one can employ the exponential tightness results from \cite[{Section 4}]{AZ20}. Here we only consider the special case of all critical points, and we follow an idea of \cite{Be22} to give an alternative treatment based on asymptotic properties of the one point correlation function $\rho_N$, from which the Laplace asymptotics follows directly.
\begin{lemma}
\label{x-4}
The following properties hold:

(i) For any large $M>0$,
\begin{align}
\label{9}
e^{-\frac{1}{2}N x^{2}+2 \sqrt{N(N-1)} m x}=e^{N \sfG(x)+o(N)},
\end{align}
where the error term is uniform in $|x|\leq M $.

(ii) Moreover,
\begin{align}
\label{10}
e^{N \sfG(x)+o(N)} \rho_{N}(x)=e^{N \sfF(x)[1+o(1)]+o(N)},
\end{align}
where the error terms are both uniform in $x \in \mathbb{R} $.
\end{lemma}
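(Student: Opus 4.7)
Proof proposal. For part (i), the plan is a one-line Taylor expansion. Writing $\sqrt{N(N-1)}=N\sqrt{1-1/N}=N-\tfrac12+O(1/N)$, the exponent on the left-hand side of \eqref{9} equals
$$-\tfrac12 Nx^2+2Nmx-mx+O(x/N)=N\sfG(x)+O(1),$$
uniformly on $\{|x|\le M\}$, and $O(1)$ is $o(N)$. This gives \eqref{9} immediately.

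For part (ii), I would feed the two-sided asymptotics of Lemma \ref{x-2}(ii) into (i). For every $\eps>0$ and $N$ large enough,
$$N\Phi(x)(1+\eps)-N\eps\le\log\rho_N(x)\le N\Phi(x)(1-\eps)+N\eps\qquad\text{uniformly in }x\in\rz.$$
Adding the exponent $N\sfG(x)+o(N)$ from (i) and using $\sfF=\sfG+\Phi$ together with $\Phi\le 0$, one obtains
$$\log\bigl[e^{N\sfG(x)+o(N)}\rho_N(x)\bigr]=N\sfF(x)+R_N(x),\qquad |R_N(x)|\le \eps N(|\Phi(x)|+1)+o(N),$$
uniformly in $x\in\rz$. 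The remaining task is to rewrite $R_N$ as $N\sfF(x)\cdot o(1)+o(N)$ with both pieces uniform. On $\{|x|\le\sqrt 2\}$ one has $\Phi\equiv 0$ by \eqref{eq:phi}, so $|R_N|\le \eps N+o(N)$ is already purely additive and uniformly $o(N)$. On $\{|x|>\sqrt 2\}$, the explicit formula for $\Phi$ shows $|\Phi(x)|\sim x^2/2$ at infinity, while $|\sfF(x)|=|\sfG(x)+\Phi(x)|$ also grows like $x^2$; hence $|\Phi(x)|/|\sfF(x)|$ is bounded outside a bounded set, so the $\eps N|\Phi(x)|$ contribution is absorbed into the factor $N\sfF(x)\cdot o(1)$, as required.

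The main obstacle is the uniform-in-$\rz$ bookkeeping in (ii), in particular near points where $\sfF(x)$ vanishes while $\Phi(x)\neq 0$ (which can occur on a bounded set where $\sfG>0$ cancels $\Phi<0$). On such a bounded exceptional set $|\Phi|$ is uniformly bounded, so the bound $\eps N(|\Phi(x)|+1)+o(N)$ for $R_N$ is itself uniformly $o(N)$ there and can be placed entirely into the additive term. A careful patching between $\{|x|\le\sqrt 2\}$, a bounded annulus, and the large-$|x|$ region—with $\eps\downarrow 0$ taken after $N\to\infty$—then yields \eqref{10} with errors uniform in $x\in\rz$.
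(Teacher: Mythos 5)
Your proof is correct and follows essentially the same route as the paper: part (i) by expanding $\sqrt{N(N-1)}$ around $N$ to get a uniformly bounded (hence $o(N)$) error on $\{|x|\le M\}$, and part (ii) by feeding the two-sided bound from Lemma \ref{x-2}(ii) into (i), then splitting the error $\eps N(|\Phi(x)|+1)$ into a compact region where it is additively $o(N)$ and a large-$|x|$ region where $|\Phi(x)|$ is comparable to $|\sfF(x)|$ via \eqref{312}. Your three-region patching ($|x|\le\sqrt2$, bounded annulus, large $|x|$) is just a slightly finer version of the paper's two-region split into compacts and large $|x|$; the substance is identical.
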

\begin{proof}
 Taking the logarithm of the left-hand side of (\ref{9}) gives $-\frac{1}{2}N x^{2}+2 \sqrt{N(N-1)} m x$, which is equal to $N\sfG(x)-(2Nmx-2 \sqrt{N(N-1)} m x)$. Furthermore, for all $|x|\leq M $, we have \begin{align*}
 &|2Nmx-2 \sqrt{N(N-1)} m x|\leq|2mM|(N-\sqrt{N(N-1)})\\
&=|2mM|\frac{\sqrt{N}}{\sqrt{N}+\sqrt{N-1}} \stackrel{N \rightarrow \infty}{\longrightarrow}  |mM|,
\end{align*}
 which completes the proof of the first part.

By Lemma \ref{x-2} (ii), taking the logarithm of the left-hand side of (\ref{10}), we get
$$
N \sfF(x)+o\left(N\left(\left|\Phi(x)\right|+1\right)\right).
$$
The error term can be bounded by $o(N)$ on any compact subset of $\mathbb{R}$. Since for all  $x \in \mathbb{R}$, there exist some constants $c, c^{\prime}$, such that
\begin{align}
\label{312}
-\frac{x^{2}}{2} \leq \Phi(x) \leq-\frac{x^{2}}{2}+c+c^{\prime} x ,
\end{align}
we find that for large enough $x$ the error term can be bounded by $o(N|\sfF(x)|)$.
\end{proof}

\begin{proposition}
\label{p-1}
\begin{itemize}
    \item[(i)]  If $\mu>\sqrt{-2 D^{\prime \prime}(0)}$, the unique maximizer of $\sfF(x)$ is at $x_\ast=m +\frac{1}{2m}$. Furthermore, $\sfF^{\prime\prime}(x_\ast)=-\frac{4m^2}{2m^2-1}$ and $\sfF(x_\ast)=m^2+\log |m|+\frac{1}{2}(1+\log 2)$.
    \item[(ii)] If $0<\mu \leq \sqrt{-2 D^{\prime \prime}(0)}$, the unique maximizer of $\sfF(x)$ is at $x_0=2m$. Furthermore, $\sfF^{\prime\prime}(x_0)=-1$ and $\sfF(x_0)=2m^2$.
\end{itemize}
\end{proposition}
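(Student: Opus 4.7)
The plan is to analyze $\sfF(x) = -x^2/2 + 2mx + \Phi(x)$ as a piecewise smooth function using the explicit form of $\Phi$ from \eqref{eq:phi}. Since $\Phi \equiv 0$ on $[-\sqrt{2}, \sqrt{2}]$, a direct differentiation yields $\Phi'(x) = -\sqrt{x^2-2}$ for $x > \sqrt{2}$ and $\Phi'(x) = \sqrt{x^2-2}$ for $x < -\sqrt{2}$, joining continuously at the boundaries. Hence $\sfF'(x) = 2m - x + \Phi'(x)$ is simple on each interval, and I would look for zeros on each of them, using throughout that $m = -\mu/\sqrt{-4D''(0)} < 0$.

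On $(-\sqrt{2}, \sqrt{2})$ the only candidate is $x = 2m$, which lies in the interval iff $m^2 < 1/2$; on $(\sqrt{2}, \infty)$ the three terms of $\sfF'$ are all negative, so no zero exists; on $(-\infty, -\sqrt{2})$ the equation $\sqrt{x^2-2} = x - 2m$ forces $x > 2m$, and squaring yields the unique candidate $x_\ast = m + 1/(2m)$, for which $x_\ast > 2m$ exactly when $m^2 > 1/2$ and then $x_\ast^2 = m^2 + 1 + 1/(4m^2) > 2$ places it in the correct region. This cleanly separates the two regimes of the proposition.

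For the second-derivative test and the extremal values, in the middle region $\sfF''(2m) = -1$ and $\sfF(2m) = 2m^2$ are immediate. For $x_\ast$, I would exploit the identities $\sqrt{x_\ast^2-2} = (2m^2-1)/(2|m|)$, $|x_\ast| = (2m^2+1)/(2|m|)$, and especially $|x_\ast| + \sqrt{x_\ast^2-2} = 2|m|$; the last collapses the logarithm in $\Phi(x_\ast)$ to $\log|m| + \tfrac12\log 2$, and a careful cancellation of the rational pieces coming from $-x_\ast^2/2$ and $-|x_\ast|\sqrt{x_\ast^2-2}/2$ produces $\sfF(x_\ast) = m^2 + \log|m| + \tfrac12(1+\log 2)$. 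The derivative $\sfF''(x_\ast) = -1 + x_\ast/\sqrt{x_\ast^2-2}$ simplifies to $-4m^2/(2m^2-1)$, which is strictly negative precisely when $m^2 > 1/2$.

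Global uniqueness follows from piecewise monotonicity and concavity. On both outer intervals $\sfF''(x) = -1 + x/\sqrt{x^2-2} < 0$, so $\sfF$ is strictly concave there, and on $(\sqrt{2}, \infty)$ also $\sfF' < 0$ throughout since $m < 0$. In case (i), with $m < -1/\sqrt{2}$, the middle-interval slope $2m - x < 2m + \sqrt{2} < 0$ makes $\sfF$ strictly decreasing on $(-\sqrt{2}, \sqrt{2})$, so the unique local max $x_\ast$ on $(-\infty, -\sqrt{2})$ is global. In case (ii), the middle interval is maximized at $2m$ with value $2m^2$, and on $(-\infty, -\sqrt{2})$ the slope $\sfF'(-\sqrt{2}^-) = \sqrt{2} + 2m \geq 0$ together with $\sfF''<0$ forces $\sfF' > 0$ throughout, so the branch max occurs at $-\sqrt{2}$; the elementary inequality $\sfF(2m) - \sfF(-\sqrt{2}) = 2m^2 + 1 - 2\sqrt{2}|m| = 2(|m| - \sqrt{2}/2)^2 \geq 0$ rules out the boundary. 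The main obstacle is the algebraic tidying of $\sfF(x_\ast)$ in case (i); once one recognizes the identity $|x_\ast|+\sqrt{x_\ast^2-2} = 2|m|$, the rational-in-$m^2$ terms cancel and the stated closed form emerges cleanly.
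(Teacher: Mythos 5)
Your proof is correct and follows essentially the same approach as the paper: compute $\sfF'$ piecewise via the explicit formula for $\Phi$, locate the unique zero, and confirm via sign/concavity analysis and direct algebra. The only cosmetic difference is that the paper first uses the symmetry observation $\sfF(x)\le\sfF(-x)$ for $x\ge 0$ (a one-line consequence of $m<0$) to restrict attention to $(-\infty,0]$, whereas you handle the interval $(\sqrt{2},\infty)$ directly by noting all three terms of $\sfF'$ are negative there; both routes are equally valid.
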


\begin{proof}
By the definition in (\ref{eq:phi}), we know $\Phi$ is a continuous and differentiable function on $\mathbb{R}$. Thus, function $\sfF(x)=-\frac{1}{2}x^2+2mx+\Phi(x)$ is also continuous and differentiable. Moreover, $\sfF(x)$ tends to $-\infty$ as $|x| \rightarrow \infty$. Therefore, ${\sfF}(x)$ must have a maximizer, which should be a critical point. Since $m=-\frac{\mu}{\sqrt{-4 D^{\prime \prime}(0)}}<0$, we know $\mu>\sqrt{-2 D^{\prime \prime}(0)}$ is equivalent to $m<-\frac{\sqrt{2}}{2}$ and $0<\mu \leq \sqrt{-2 D^{\prime \prime}(0)}$ is equivalent to $-\frac{\sqrt{2}}{2}\leq m<0$, respectively. Due to the expression of $\sfF(x)$ and $m<0$, we can easily see that $\sfF(x)\leq \sfF(-x)$ for all $x \in [0,+\infty)$, where the equality holds only when $x=0$. Therefore, in order to find a maximizer of $\sfF(x)$, it suffices
to consider the critical points in $(-\infty,0]$. Taking the derivative of $\sfF(x)$, we obtain
\begin{align*}
\sfF^{\prime}(x)=-x+2m+\sqrt{x^{2}-2}\indi_{\{x \leq -\sqrt{2}\}}.
\end{align*}
When $-\frac{\sqrt{2}}{2}\leq m<0$, letting $\sfF^{\prime}(x)=0$, we can get the unique solution $x_0=2m$. Moreover, $\sfF^{\prime}(x)>0$ for $x\in(-\infty,x_0)$ and $\sfF^{\prime}(x)<0$ for $x\in(x_0,0)$. Thus $\sfF(x)$ attains the unique maximizer  at $x_0=2m$. By straightforward calculation, we conclude that $\sfF^{\prime\prime}(x_0)=-1$ and $\sfF(x_0)=2m^2$.
When $m<-\frac{\sqrt{2}}{2}$, by the same argument as above, we see that $x_\ast=m +\frac{1}{2m}$ is the unique solution of the equation $\sfF^{\prime}(x)=0$. Similarly, since $\sfF^{\prime}(x)>0$ for $x\in(-\infty,x_\ast)$ and $\sfF^{\prime}(x)<0$ for $x\in(x_\ast,0)$, $\sfF(x)$ attains the unique maximizer  at $x_\ast$.  After a little algebra, we get $\sfF^{\prime\prime}(x_\ast)=-\frac{4m^2}{2m^2-1}$ and $\sfF(x_\ast)=m^2+\log |m|+\frac{1}{2}(1+\log 2)$.
\end{proof}

Now we can prove the main result of this section.
\begin{theorem}
\label{h-1}
 Recall $\operatorname{Crt}_{N}\left(\mathbb{R}, \mathbb{R}^{N}\right)$ is the total number of critical points of $H_{N}(x)$.
\begin{itemize}
    \item[(i)] If $\mu>\sqrt{-2 D^{\prime \prime}(0)}$, then
\begin{align}\label{R1}
\lim _{N \rightarrow \infty} \mathbb{E}\left[\operatorname{Crt}_{N}\left(\mathbb{R}, \mathbb{R}^{N}\right)\right]=1 ,
\end{align}
\item[(ii)] If $0<\mu \leq \sqrt{-2 D^{\prime \prime}(0)}$,  then
\begin{align}
\label{by}
\lim _{N \rightarrow \infty} \frac{1}{N} \log \mathbb{E}\left[\operatorname{Crt}_{N}\left(\mathbb{R}, \mathbb{R}^{N}\right)\right]= -\log \frac{\mu}{\sqrt{-2 D^{\prime \prime}(0)}}+\frac{\mu^{2}}{-4 D^{\prime \prime}(0)}-\frac{1}{2}.
\end{align}
\end{itemize}
\end{theorem}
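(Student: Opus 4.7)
The plan is to follow the Kac--Rice route sketched for Proposition \ref{p-2} and to carry out in full the Laplace asymptotic analysis that was deferred there. The crucial simplification in the LRC setting is that Lemma \ref{x-1} provides independence of $\nabla H_N(x)$ and $\nabla^2 H_N(x)$, and the marginal covariance of $\nabla^2 H_N(x)$ is the same at every $x$. Hence Kac--Rice gives
\begin{align*}
\mathbb{E}[\Crt_N(\rz,\rz^N)] = \mathbb{E}[|\det \nabla^2 H_N(0)|]\int_{\rz^N}p_{\nabla H_N(x)}(0)\,\dd x = \frac{\mathbb{E}[|\det \nabla^2 H_N(0)|]}{\mu^N},
\end{align*}
since $p_{\nabla H_N(x)}(0) = (2\pi D'(0))^{-N/2}e^{-\mu^2\|x\|^2/(2D'(0))}$.

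Setting $m=-\mu/\sqrt{-4D''(0)}$, Lemma \ref{x-1} yields the representation $\nabla^2 H_N(0)\stackrel{d}{=}\sqrt{-4D''(0)}(\GOE_N+\eta I_N)$ with $\eta\sim\mathcal{N}(-m,1/(2N))$. Applying Lemma \ref{x-9} to $\mathbb{E}[|\det(\GOE_N+tI_N)|]$, then integrating over $\eta$ and changing variables $w=-\sqrt{N/(N+1)}\,t$, should produce
\begin{align*}
\mathbb{E}[\Crt_N(\rz,\rz^N)] = \frac{\sqrt{2}\,\Gamma(\tfrac{N+1}{2})(N+1)}{\sqrt{\pi}\,|m|^N N^{N/2}e^{Nm^2}}\int_\rz e^{-\frac{1}{2}(N+1)w^2+2\sqrt{N(N+1)}mw}\rho_{N+1}(w)\,\dd w,
\end{align*}
which is structurally identical to the SRC expression appearing in the proof of Proposition \ref{p-2}, the cancellation $(-4D''(0))^{N/2}/\mu^N=|m|^{-N}$ matching all constants. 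Lemma \ref{x-4} then identifies the integrand with $e^{(N+1)\sfF(w)(1+o(1))+o(N)}$ up to subexponential factors, Proposition \ref{p-1} locates the unique maximizer of $\sfF$ together with its value and curvature, and the tail bound $\Phi(w)\le -\tfrac{1}{2}w^2+c+c'|w|$ from (\ref{312}) combined with Lemma \ref{x-2}(ii) provides the exponential tightness needed to reduce to a compact neighbourhood of the maximizer. In case (ii), where $x_0=2m$ lies in the bulk, only the exponential rate matters; Stirling applied to the prefactor together with $\sfF(x_0)=2m^2$ directly produces (\ref{by}).

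The main obstacle is to prove the limit equals exactly $1$ in case (i). There the maximizer $x_*=m+1/(2m)$ lies in the edge region $|x_*|>\sqrt{2}$, so one must insert the sharp asymptotic of Lemma \ref{x-2}(i) into the integrand and invoke Lemma \ref{x-3} on a small interval around $x_*$ with $n=N+1$, $f(w)=-\tfrac{1}{2}w^2+\Phi(w)$, $g(w)=2mw$. The subleading discrepancy $(N+1)-\sqrt{N(N+1)}\to 1/2$ contributes a boundary factor $e^{-mx_*}$, while Stirling for the prefactor, the sharp Laplace constant $\sqrt{2\pi/((N+1)(-\sfF''(x_*)))}$ and the polynomial denominator of $\rho_{N+1}$ from Lemma \ref{x-2}(i) together cancel every power of $N$. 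What remains is a pure constant in $m$ which, using $-\sfF''(x_*)=4m^2/(2m^2-1)$ and the algebraic identities $|x_*|+\sqrt{x_*^2-2}=2|m|$, $\sqrt{x_*^2-2}=|m|-1/(2|m|)$ together with $m^2-mx_*=-1/2$, collapses to exactly $1$. Verifying this cancellation line by line is the bulk of the work and the only genuinely new technical element compared with the standard complexity computation.
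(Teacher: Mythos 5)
Your proposal is correct and follows essentially the same route as the paper: Kac--Rice combined with the $x$-independence of the unconditional Hessian law reduces the problem to (3.18), and then Lemma \ref{x-4}, Proposition \ref{p-1}, Lemma \ref{x-2} and Lemma \ref{x-3} carry out the Laplace analysis, with the algebraic identities $x_*=m+\tfrac{1}{2m}$, $|x_*|+\sqrt{x_*^2-2}=2|m|$, $m^2-mx_*=-\tfrac12$ and $-\sfF''(x_*)=\tfrac{4m^2}{2m^2-1}$ collapsing the constant to $1$ exactly as in the paper's "tedious but easy calculation." The only cosmetic difference is that you derive the key integral identity by combining Lemma \ref{x-9} with a Gaussian integration over $\eta$, whereas the paper invokes \cite{AZ20}*{Lemma 2.1} directly; the two are equivalent.
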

\begin{proof}

Using the Kac--Rice formula and Lemma \ref{x-1}, we find
\begin{align*}
\mathbb{E} [\operatorname{Crt}_{N}\left(\mathbb{R}, \mathbb{R}^{N}\right)]=\int_{\mathbb{R}^{N}} \mathbb{E}\left[\left|\operatorname{det} \nabla^{2} H_{N}(x)\right|\right] p_{\nabla H_{N}(x)}(0) \dd  x,
\end{align*}
where $p_{\nabla H_{N}(x)}(0)=\frac{1}{(2 \pi)^{N / 2} D^{\prime}(0)^{N / 2}} \exp \left(-\frac{\mu^{2}\|x\|^{2}}{2 D^{\prime}(0)}\right)$.
Moreover,
$$\nabla^{2} H_{N}(x) \stackrel{d}= \sqrt{-4 D^{\prime \prime}(0)} \GOE_N-\left(\sqrt{\frac{-2 D^{\prime \prime}(0)}{N}} Z-\mu\right)  I_{N},
$$
where $Z$ is a standard Gaussian random variable independent of $\GOE_N$. Since
$
m=-\mu / \sqrt{-4 D^{\prime \prime}(0)},
$
it follows from  \cite[Lemma 2.1]{AZ20} with $a=\sqrt{-4 D^{\prime \prime}(0)}, b=-\mu, \sigma=\sqrt{-2 D^{\prime \prime}(0)}$ that
\begin{align*}
\mathbb{E}\left[\left|\operatorname{det} \nabla^{2} H_{N}(x)\right|\right]&=\frac{\sqrt{2}\left[-4 D^{\prime \prime}(0)\right]^{N / 2} \Gamma\left(\frac{N+1}{2}\right)(N+1)}{\sqrt{\pi} N^{N / 2} e^{N m^{2}}}\\&\quad\times \int_{\mathbb{R}} e^{-\frac{1}{2}(N+1) w^{2}+2 \sqrt{N(N+1)} m w} \rho_{N+1}(w)\dd w.
\end{align*}
Straightforward calculation gives
$
\int_{\mathbb{R}^{N}} p_{\nabla H_{N}(x)}(0) \mathrm{d} x=\frac{1}{\mu^N}.
$
Therefore, we get
\begin{align}
\label{318}
\mathbb{E} [\operatorname{Crt}_{N}\left(\mathbb{R}, \mathbb{R}^{N}\right)]=\frac{\sqrt{2}\Gamma\left(\frac{N+1}{2}\right)(N+1)}{\sqrt{\pi}|m|^N N^{N / 2} e^{N m^{2}}} \int_{\mathbb{R}} e^{-\frac{1}{2}(N+1) w^{2}+2 \sqrt{N(N+1)} m w} \rho_{N+1}(w)\dd w.
\end{align}
Using Lemma \ref{x-4} and the Stirling formula, the above integral can be written as
\begin{align*}
\mathbb{E} [\operatorname{Crt}_{N}\left(\mathbb{R}, \mathbb{R}^{N}\right)]&=\exp \left(-N\left[m^2+\log |m|+\frac{1}{2}(1+\log 2)\right]+o(N)\right)\\&\quad\times \int_{\mathbb{R}} e^{(N+1) \sfF(x)(1+o(1))} \dd  x .
\end{align*}
By the expression of $\sfF(x)$, there exists a large $M$ such that the above integral is equal to
$$
\begin{aligned}
&\mathbb{E} [\operatorname{Crt}_{N}\left(\mathbb{R}, \mathbb{R}^{N}\right)]=\exp \left(-N\left[m^2+\log |m|+\frac{1}{2}(1+\log 2)\right]+o(N)\right) \\&\quad\quad\quad\quad\quad\quad\quad\quad\times\left(\int_{-M}^{M} e^{(N+1) \sfF(x)(1+o(1))} \dd  x+o(1)\right).
\end{aligned}
$$
Then using the Laplace method yields that
\begin{align*}
&\mathbb{E} [\operatorname{Crt}_{N}\left(\mathbb{R}, \mathbb{R}^{N}\right)]\\&=\exp \left(\left[-N[m^2+\log |m|+\frac{1}{2}(1+\log 2)]+(N+1)\max _{x \in[-M,M]} \sfF(x)\right]+o(N)\right). \end{align*}
Thus, applying Proposition \ref{p-1} (ii) immediately gives Theorem \ref{h-1} (ii).

By the same argument as above, using  Proposition \ref{p-1} (i) we obtain
$$
\lim _{N \rightarrow \infty} \frac{1}{N} \log \mathbb{E} [\operatorname{Crt}_{N}\left(\mathbb{R}, \mathbb{R}^{N}\right)]=0 \text { if } \mu>\sqrt{-2 D^{\prime \prime}(0)},
$$
which is a weaker form of the triviality claimed in item (i).

Now we turn to prove the case (i). By Lemma \ref{x-4}, there exists a sufficiently large $M$ such that (\ref{318}) is equal to
\begin{align*}
\mathbb{E} [\operatorname{Crt}_{N}\left(\mathbb{R}, \mathbb{R}^{N}\right)]&=\frac{\Gamma\left(\frac{N+1}{2}\right)\sqrt{N+1}}{\sqrt{2}\pi|m|^N N^{N / 2} e^{N m^{2}}}\\&\quad\times\int_{-M}^{M}e^{-\frac{1}{2}(N+1) w^{2}+2 \sqrt{N(N+1)} m w} \rho_{N+1}(w)\dd w+o(1) .
\end{align*}
Combining Proposition \ref{p-1} (i) with the Stirling formula, using Laplace principle, we can further restrict to any fixed neighborhood $\left[x_{*}-\varepsilon, x_{*}+\varepsilon\right] $ of the maximizer and  yield only $o(1)$ error. Note that we have assumed $\mu>\sqrt{-2 D^{\prime \prime}(0)}$, which means $x_{*}<-\sqrt{2}$ by  Proposition \ref{p-1}.  Hence choosing $\varepsilon> 0$ small enough allows the use of Lemma \ref{x-2}  (i), which implies
\begin{align*}
\mathbb{E} [\operatorname{Crt}_{N}\left(\mathbb{R}, \mathbb{R}^{N}\right)]&=\frac{\Gamma\left(\frac{N+1}{2}\right)\sqrt{N+1}}{\sqrt{2}\pi|m|^N N^{N / 2} e^{N m^{2}}}\\&\quad\times\int_{x_{*}-\varepsilon}^{x_{*}+\varepsilon}\frac{e^{(N+1)(-\frac{1}{2}x^{2}+\Phi(x))+2 \sqrt{N(N+1)}m x}}{\left(x^{2}-2\right)^{\frac{1}{4}}\left(|x|+\sqrt{x^{2}-2}\right)^{\frac{1}{2}+o(1)}} \dd  x+o(1) .
\end{align*}
Using Lemma \ref{x-3} with $f(x)=-\frac{1}{2}x^{2}+\Phi(x)$ and $g(x)=2mx$, by the mean value theorem, we have as $N\to\8$
\begin{align*}
     \mathbb{E}\left[\operatorname{Crt}_{N}\left(\mathbb{R}, \mathbb{R}^{N}\right)\right]&\sim \frac{\Gamma\left(\frac{N+1}{2}\right)\sqrt{N+1}}{\sqrt{2}\pi|m|^N N^{N / 2} e^{N m^{2}}}\frac{e^{(N+1)(-\frac{1}{2}x_{*}^{2}+\Phi(x_{*}))+2 \sqrt{N(N+1)}m x_{*}}}{\left(\tilde x^{2}-2\right)^{\frac{1}{4}}\left(|\tilde x|+\sqrt{\tilde x^{2}-2}\right)^{\frac{1}{2}+o(1)}}\\&\quad\times
\sqrt{\frac{2\pi}{(N+1)(-\sfF^{\prime\prime}(x_\ast))}},
\end{align*}
where $\tilde x\in (x_*-\eps,x_*+\eps)$ may depend on $N$. Using Proposition \ref{p-1} (i), plugging the corresponding values of $\Phi(x_{*})$ and $\sfF^{\prime\prime}(x_\ast)$ into the above limit, and replacing the $\Gamma$ function  with its asymptotics $\Gamma(N/2)\sim 2 \sqrt{\pi/N} e^{-N/2}(N/2)^{N/2}$,  sending $N\to\8$ and then $\eps\downarrow0$, after some tedious but easy calculation, we eventually get
$$\lim _{N \rightarrow \infty} \mathbb{E}\left[\operatorname{Crt}_{N}\left(\mathbb{R}, \mathbb{R}^{N}\right)\right]=1 ,$$
which is the desired result of triviality claimed in this theorem.
\end{proof}

\begin{remark}
Although the result (\ref{by}) has been proved in \cite[Theorem 1]{AZ20} with $\Xi=0$, the method here is slightly different, and it can be viewed as a by-product of our argument.
\end{remark}

\section{Proofs of main results for LRC fields}\label{se:lrc}
To state our results in this section, we first recall some useful constants as below, which are taken from \cite[Equation (3.15)]{AZ20}:
\begin{align}
\label{eq:msialbt}
  m_1 & =m_1(\rho,u)= \mu + \frac{(u-\frac{\mu\rho^2}{2} +\frac{\mu D'(\rho^2)\rho^2}{D'(0)}) ( 2D''(\rho^2)\rho^2+D'(\rho^2) -D'(0) )}{D(\rho^2) -\frac{D'(\rho^2)^2 \rho^2}{D'(0) }}, \\
  m_2&=m_2(\rho,u)= \mu + \frac{(u-\frac{\mu \rho^2}{2} +\frac{\mu D'(\rho^2) \rho^2}{D'(0)}) (D'(\rho^2) -D'(0) )}{D( \rho^2) -\frac{D'(\rho^2)^2 \rho^2}{D'(0) }}, \notag\\
\si_1 & =\si_1(\rho)= \sqrt{\frac{-4D''(0)-(\al \rho^2 +\bt)\al\rho^2}{N}}, \ \ \
  \si_2  =\si_2(\rho)= \sqrt{\frac{-2D''(0)-(\al\rho^2 +\bt)\bt}{N}}, \notag \\
  m_Y&=m_Y(\rho)= \frac{\mu\rho^2}{2}-\frac{\mu D'(\rho^2) \rho^2}{D'(0) }, \ \ \ \si_Y =\si_Y(\rho) =\sqrt{\frac1N\Big(D(\rho^2)-\frac{D'(\rho^2)^2\rho^2}{D'(0)} \Big)},\notag \\
  \alpha &=\alpha(\rho^2)= \frac{2D''(\rho^2)}{ \sqrt{ D(\rho^2)-\frac{D'(\rho^2)^2 \rho^2}{D'(0)}}},  \ \ \
   \beta =\beta(\rho^2)=\frac{D'(\rho^2 )-D'(0)}{\sqrt{ D(\rho^2 )-\frac{D'(\rho^2)^2 \rho^2}{D'(0)}}}\nonumber. 
\end{align}
Before proving Theorem \ref{h-2}, we also recall some facts from \cite{AZ20}. Let $E \subset \mathbb{R}$ be a Borel set and $B_{N}\left(R_{1}, R_{2}\right)=\{x \in \mathbb{R}^{N}: R_{1}<\frac{\|x\|}{\sqrt{N}}<R_{2}\}$ be a shell where $0 \leq R_{1}<R_{2} \leq \infty$. In this case we write $\operatorname{Crt}_{N}\left(E,\left(R_{1}, R_{2}\right)\right)=$ $\operatorname{Crt}_{N}\left(E, B_{N}\left(R_{1}, R_{2}\right)\right)$.
Using the spherical coordinates and writing $\rho=\frac{\|x\|}{\sqrt{N}}$, by the Kac--Rice formula and \cite[Proposition 3.3]{AZ20},
$$
\begin{aligned}
&\mathbb{E} \operatorname{Crt}_{N}\left(E,\left(R_{1}, R_{2}\right)\right)\\ 
&=S_{N-1} N^{(N-1) / 2} \int_{R_{1}}^{R_{2}} \int_{E} \mathbb{E}[|\operatorname{det} G|] \frac{1}{\sqrt{2 \pi} \sigma_{Y}} e^{-\frac{\left(u-m_{Y}\right)^{2}}{2 \sigma_{Y}^{2}}}\\&\quad\times \frac{1}{(2 \pi)^{N / 2} D^{\prime}(0)^{N / 2}} e^{-\frac{N \mu^{2} \rho^{2}}{2 D^{\prime}(0)}} \rho^{N-1} \dd  u \dd  \rho;
\end{aligned}
$$
see \cite[{(4.1)}]{AZ20} for details. In the above formula,  $Y=\frac{H_{N}(x)}{N}-\frac{D^{\prime}\left(\frac{\|x\|^{2}}{N}\right) \sum_{i=1}^{N} x_{i} \partial_{i} H_{N}(x)}{N D^{\prime}(0)}$, $m_Y$ and $\sigma_{Y}^{2}$ are the mean and the variance of $Y$, $S_{N-1}=\frac{2 \pi^{N / 2}}{\Gamma(N / 2)}$ is the area of $N-1$ dimensional unit sphere, and
\begin{align}\label{eq:gu}
    G= G(u) =
    \begin{pmatrix}
      z_1'& \xi^\mathsf T \\
       \xi & \sqrt{-4D''(0)} (\sqrt{\frac{N-1}{N}}\GOE_{N-1}-z_3'I_{N-1})
    \end{pmatrix}=: \begin{pmatrix}
        z_1'& \xi^\mathsf T \\
         \xi & G_{**}
      \end{pmatrix}  ,
  \end{align}
  where with $z_1,z_2,z_3$ being independent standard Gaussian random variables,
  \begin{align*}
    z_1'&=\si_1 z_1 - \si_2  z_2 + m_1, \quad
z_3'=\frac1{\sqrt{-4D''(0)}}\Big(\si_2 z_2+ \frac{  \sqrt{\al\bt}\rho }{\sqrt N} z_3 - m_2\Big),
  \end{align*}
  and $\xi$ is a centered column Gaussian vector with covariance matrix $\frac{-2D''(0)}{N}I_{N-1}$ which is independent of $z_1,z_2,z_3$ and the GOE matrix $\GOE_{N-1}$. Here the conditional distribution of Hessian was identified with $G$ under a technical condition (Assumption IV) in \cite{AZ20}, which was removed recently in \cite{XYZ23}.
  From the above relations,  conditioning on $z_3'=y$ will only affect the first entry $z_1'$ but not the structure of the matrix $G$. By abuse of notation we will write $G$ for $(G | z_3'=y)$ to save space from time to time, which should not cause ambiguity from context since $G$ is a function of $y$ only if conditioning on $z_3'=y$.  Using the Schur complement formula for the matrix $G$, we have
\begin{align}
\label{sk6}
\operatorname{det} G & =\operatorname{det}\left(G_{* *}\right)\left(z_{1}^{\prime}-\xi^{\top} G_{* *}^{-1} \xi\right) \notag\\
&\stackrel{d}=\left[-4 D^{\prime \prime}(0)\right]^{(N-1) / 2} z_{1}^{\prime} \prod_{j=1}^{N-1}\left(\sqrt{\frac{N-1}{N}}\lambda_{j}-z_{3}^{\prime}\right) \\
&\quad -\frac{\left[-4 D^{\prime \prime}(0)\right]^{N / 2}}{2 N} \sum_{k=1}^{N-1} Z_{k}^{2} \prod_{j \neq k}^{N-1}\left(\sqrt{\frac{N-1}{N}}\lambda_{j}-z_{3}^{\prime}\right) \notag,
\end{align}
where $\lambda_{j}, \  j=1,\dots, N-1,$ are the eigenvalues of $\GOE_{N-1}$ defined in (\ref{eq:gu}), and $Z_k$ are i.i.d.~standard normal random variables. Since the eigenvalues and eigenvectors of GOE matrices are independent, the distributional identity in (\ref{sk6}) follows from rotating the vector $\xi$ in matrix $G$. We remark that this identity is crucial to get many results, including the main results (\ref{gg1}) and \eqref{zx}.
  
The next two results are taken from \cite{AZ20}, and we list them here for the reader's convenience.
\begin{theorem}[{\cite[Theorem 1.2]{AZ20}}]
\label{x-5}
Let $0 \leq R_{1}<R_{2} \leq \infty$ and $E$ be an open set of $\mathbb{R}$. Assume Assumptions I and II. Then
\begin{align*}
\lim _{N \rightarrow \infty} \frac{1}{N} \log \mathbb{E} \operatorname{Crt}_{N}\left(E, B_{N}\left(R_{1}, R_{2}\right)\right)&=\frac{1}{2} \log \left[-4 D^{\prime \prime}(0)\right]-\frac{1}{2} \log D^{\prime}(0)+\frac{1}{2}\\&\quad+\sup _{(\rho, u, y) \in F} \psi_{*}(\rho, u, y),
\end{align*}
where $F=\left\{(\rho, u, y): y \in \mathbb{R}, \rho \in (R_{1}, R_{2}), u \in E\right\}$, and the function $\psi_{*}$ is given by
\begin{align}
\label{ab3}
&\psi_{*}(\rho, u, y)=\Psi_{*}(y)-\frac{\left(u-\frac{\mu \rho^{2}}{2}+\frac{\mu D^{\prime}\left(\rho^{2}\right) \rho^{2}}{D^{\prime}(0)}\right)^{2}}{2\left(D\left(\rho^{2}\right)-\frac{D^{\prime}\left(\rho^{2}\right)^{2} \rho^{2}}{D^{\prime}(0)}\right)}-\frac{\mu^{2} \rho^{2}}{2 D^{\prime}(0)}-\frac{-2 D^{\prime \prime}(0)}{-2 D^{\prime \prime}(0)-\frac{\left[D^{\prime}\left(\rho^{2}\right)-D^{\prime}(0)\right]^{2}}{D\left(\rho^{2}\right)-\frac{D^{\prime}\left(\rho^{2}\right)^{2} \rho^{2}}{D^{\prime}(0)}}}\\&\quad\times\left(y+\frac{1}{\sqrt{-4 D^{\prime \prime}(0)}}\left[\mu+\frac{\left(u-\frac{\mu \rho^{2}}{2}+\frac{\mu D^{\prime}\left(\rho^{2}\right) \rho^{2}}{D^{\prime}(0)}\right)\left(D^{\prime}\left(\rho^{2}\right)-D^{\prime}(0)\right)}{D\left(\rho^{2}\right)-\frac{D^{\prime}\left(\rho^{2}\right)^{2} \rho^{2}}{D^{\prime}(0)}}\right]\right)^{2}+\log \rho \notag .
\end{align}
\end{theorem}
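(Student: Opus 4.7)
The plan is to start from the Kac--Rice representation for $\mathbb{E}\operatorname{Crt}_N(E,(R_1,R_2))$ written down just above the theorem and to reduce the analysis of $\mathbb{E}|\det G|$ to a GOE computation of the type carried out in Lemma \ref{x-9}. The block-matrix structure in \pref{eq:gu} is the starting point: conditional on $z_2,z_3$, which determine the shift $z_3'$, the $(N-1)\times(N-1)$ lower-right block $G_{**}$ is a shifted rescaled GOE matrix, while the first row/column is independent and governed by $z_1'$ and the Gaussian vector $\xi$ with covariance $\frac{-2D''(0)}{N}I_{N-1}$.

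First I would integrate out the first row/column using the Schur-type expansion $\det G = z_1' \det G_{**} - \xi^{\mathsf T} \operatorname{adj}(G_{**})\,\xi$. Because $\xi$ has variance of order $1/N$ and is independent of $G_{**}$, the bilinear form $\xi^{\mathsf T} \operatorname{adj}(G_{**})\,\xi$ is at most of the same order as $|\det G_{**}|$ in expectation and does not affect the exponential rate, so $\mathbb{E}|\det G|$ is controlled by $\mathbb{E}[|z_1'|\,|\det G_{**}|]$ at the logarithmic scale. Applying the variant of Lemma \ref{x-9} for the shifted GOE then brings the integrand into the form $e^{Nt^2/2}\,\rho_{N}\!\bigl(\sqrt{(N-1)/N}\,t\bigr)$ up to explicit prefactors, and plugging in Lemma \ref{x-2} together with the identity \pref{eq:psiphi} lets me identify the $\Psi_*(y)$ contribution in \eqref{ab3} after the change of variable $y = -\sqrt{(N-1)/N}\,t$.

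Next I would perform Laplace asymptotics on the resulting integral in $(\rho,u,y)$, closely paralleling the SRC computation in Lemma \ref{le:stech}. The Gaussian factor in $u$ coming from the density of $Y$ supplies the second term of $\psi_*$ in \eqref{ab3}; the factor $p_{\nabla H_N(x)}(0)\,\rho^{N-1}$ contributes $-\mu^2\rho^2/(2D'(0))+\log\rho$; completing the square in $t$ after collecting the two Gaussian factors with the $\rho$-dependent variances $\sigma_1,\sigma_2$ from \pref{eq:msialbt} produces the final quadratic term of \eqref{ab3} whose coefficient involves $-2D''(0) - [D'(\rho^2)-D'(0)]^2/(D(\rho^2)-D'(\rho^2)^2\rho^2/D'(0))$. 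The prefactors $S_{N-1}N^{(N-1)/2}$, the Gamma function from Lemma \ref{x-9}, and the GOE scaling $(-4D''(0))^{N/2}$ combine via Stirling to yield the constant $\tfrac12\log[-4D''(0)] - \tfrac12\log D'(0) + \tfrac12$.

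The main obstacle is exponential tightness in the non-compact variables $\rho$ and $u$: the quadratic form in $(\rho^2, u, y)$ appearing in the exponent must be strictly negative definite outside a compact set for the Laplace argument to close. This is precisely where Assumption III enters, as it forces the defective variance $\sigma_1^2-\sigma_2^2$ and the coupling between $u$ and $\rho^2$ to stay bounded away from the threshold at which the Gaussian integrals would blow up, so that the tail contributions are exponentially negligible. Once this tightness is in hand, Laplace asymptotics on any compact set in $(\rho,u,y)$, combined with a density/continuity argument that allows replacing $F$ by $\overline F$ in the supremum, deliver the stated formula.
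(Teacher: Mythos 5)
The paper does not prove Theorem \ref{x-5}; it is quoted verbatim from \cite{AZ20} (Theorem 1.2), as the sentence immediately preceding the statement makes explicit (``The next two results are taken from \cite{AZ20}, and we list them here for the reader's convenience''). There is therefore no in-paper proof against which to compare your argument line by line. Your outline is a plausible reconstruction of the \cite{AZ20} strategy and is consistent with the SRC analogue carried out here in Lemma \ref{le:stech} and Proposition \ref{x-10}, but it glosses over the two steps that are actually hard, and both are genuine gaps in the sketch.

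The first gap is your dismissal of the rank-one coupling $\xi^{\mathsf T}\operatorname{adj}(G_{**})\,\xi$. You assert that because $\xi$ has covariance $\tfrac{-2D''(0)}{N}I_{N-1}$ and is independent of $G_{**}$, this bilinear form is ``at most of the same order as $|\det G_{**}|$ in expectation.'' That is not self-evident: writing it as $\tfrac{-2D''(0)}{N}\sum_k Z_k^2\prod_{j\neq k}\bigl(\sqrt{\tfrac{N-1}{N}}\lambda_j-z_3'\bigr)$ (cf.\ \eqref{sk6}), each factor $\prod_{j\neq k}$ equals $\det G_{**}$ divided by one eigenvalue factor, which can be arbitrarily small, so the product is not pointwise comparable to $|\det G_{**}|$; matching exponential orders requires a real estimate (LDP for the empirical measure plus continuity of the logarithmic potential), which in \cite{AZ20} is carried out in dedicated lemmas (cited in this paper as \cite{AZ20}*{Lemmas 4.6 and 4.8}). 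Moreover for the lower bound one takes $|z_1'\det G_{**}-\xi^{\mathsf T}\operatorname{adj}(G_{**})\xi|$, so one must rule out cancellation, e.g.\ by conditioning and using $\ez|a+bz|\geq\sqrt{2/\pi}\,|b|$ as the present paper does in \eqref{528}. Your sketch addresses neither the upper-bound estimate nor the lower-bound cancellation. The second gap is your reading of Assumption III: it is not a tail/tightness condition but precisely the condition that $\sigma_1^2,\sigma_2^2$ in \pref{eq:msialbt} are strictly positive, i.e.\ that the conditional Gaussian representation \pref{eq:gu} of the Hessian is non-degenerate; exponential tightness in $(\rho,u)$ instead comes from the Gaussian decay of $p_{\nabla H_N(x)}(0)$ and $f_Y(u)$, exactly as in \pref{le:exptt}. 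With these two points repaired, the rest of your outline (Schur reduction, Lemma \ref{x-9} for the shifted GOE, Lemma \ref{x-2}, Laplace asymptotics producing $\Psi_*(y)$, Stirling bookkeeping, and the density/continuity replacement of $F$ by $\bar F$) is along the right lines.
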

We will use this result in a slightly more general setting, namely besides confining the Hamiltonian $H_N$ to shell domains with constrained critical values, we also restrict the value of $y$ which corresponds to the value of $z_3'$ in the representation \pref{eq:gu}. This is an LRC version of \pref{le:stech}, and the proof for this more general fact follows the same strategy as there, which is a simple modification of the proof for \cite[Theorem 1.2]{AZ20}. We omit the details here.
In the following, we use the same notations $\rho_*$, $u_*$ and $y_*$ for the maximizers of the complexity functions for both the SRC and LRC fields. It should be clear from context which set of values we are using, and the readers should not confuse them.
\begin{lemma}[{\cite[Example 2]{AZ20}}]
\label{x-6}
Assume  $\mu>\sqrt{-2 D^{\prime \prime}(0)}$. Then $\psi_{*}(\rho,u,y)$ attains its unique maximum in $\rz_+\times \rz\times \rz$ at $(\rho_*,u_*,y_*)$, where
\begin{align}
\label{eq:ruy}
&\rho_*=\frac{\sqrt{D^{\prime}(0)}}{\mu},\ \  u_{*}=\frac{D^{\prime}\left(\rho_{*}^{2}\right)-D^{\prime}(0)}{\mu}+\frac{\mu \rho_{*}^{2}}{2}-\frac{\mu D^{\prime}\left(\rho_{*}^{2}\right) \rho_{*}^{2}}{D^{\prime}(0)},\\& y_{*}=-\frac{\mu}{\sqrt{-4 D^{\prime \prime}(0)}}-\frac{\sqrt{-D^{\prime \prime}(0)}}{\mu} \notag.
\end{align}
Moreover, the maximum value is
\begin{align}\label{eq:cpx2}
\psi_{*}\left(\rho_{*}, u_{*}, y_{*}\right)=-\log \sqrt{-4 D^{\prime \prime}(0)}-\frac{1}{2}+\frac{1}{2} \log D^{\prime}(0) .
\end{align}
\end{lemma}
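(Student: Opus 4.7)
The plan is to generalize the argument of Lemma \ref{x-11} to the LRC setting. First I would introduce the substitution
\[
v = \frac{u - \tfrac{\mu\rho^2}{2} + \tfrac{\mu D'(\rho^2)\rho^2}{D'(0)}}{\sqrt{D(\rho^2) - D'(\rho^2)^2\rho^2/D'(0)}}, \qquad J = \sqrt{-4D''(0)},
\]
together with $\beta = \beta(\rho^2)$ from (\ref{eq:msialbt}). Under this change of variables, $\psi_*$ in (\ref{ab3}) becomes
\[
\psi_*(\rho, v, y) = \Psi_*(y) - \frac{v^2}{2} + \log\rho - \frac{\mu^2\rho^2}{2D'(0)} - \frac{J^2/2}{J^2/2 - \beta^2}\Bigl(y + \frac{\mu + \beta v}{J}\Bigr)^2,
\]
which is well-defined since Assumption III ensures $J^2/2 - \beta^2 > 0$ on the relevant domain.

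The next step is to perform the joint inner maximization over $(v,y)$ for fixed $\rho$. Writing $w := y + (\mu+\beta v)/J$ and $C := (J^2/2)/(J^2/2 - \beta^2)$, the equation $\partial_v\psi_* = 0$ forces $v = -(2C\beta/J)w$; substituting back into $\partial_y\psi_* = 0$ gives $\Psi_*'(y) = 2Cw$ together with $wC = y + \mu/J$, which collapses to the single $\rho$-free equation $\Psi_*'(y) = 2(y + \mu/J)$. The hypothesis $\mu > \sqrt{-2D''(0)}$ is equivalent to $2\mu^2 > J^2$; under this condition I would verify that $(2\mu^2 - J^2)^2 > 0$ forces $|y_*| > \sqrt 2$ at the candidate $y_* = -\mu/J - J/(2\mu)$, so the outside-semicircle form $\Psi_*'(y) = y + \sqrt{y^2 - 2}$ applies and produces exactly this root. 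The corresponding $v_* = -(2\beta/J)(y_* + \mu/J)$ depends on $\rho$ through $\beta(\rho^2)$, and translating back recovers $u_*$ as in (\ref{eq:ruy}).

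A key algebraic cancellation now occurs: after substituting $v_*$ and $y_*$, the two $\beta$-dependent contributions satisfy
\[
\frac{v_*^2}{2} + C w_*^2 = \frac{(y_*+\mu/J)^2}{J^2}\bigl(2\beta^2 + (J^2 - 2\beta^2)\bigr) = (y_* + \mu/J)^2,
\]
wiping out the $\rho$-dependence of the inner maximum. The residual is therefore
\[
\Psi_*(y_*) - (y_* + \mu/J)^2 + \log\rho - \frac{\mu^2\rho^2}{2D'(0)},
\]
whose last two terms are strictly concave in $\rho$ with unique maximum at $\rho_* = \sqrt{D'(0)}/\mu$, contributing $\tfrac12\log D'(0) - \log\mu - \tfrac12$. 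The main obstacle is the final bookkeeping: using $\sqrt{y_*^2 - 2} = \mu/J - J/(2\mu)$ (nonnegative precisely when $\mu > \sqrt{-2D''(0)}$), one gets $|y_*| + \sqrt{y_*^2-2} = 2\mu/J$ and $y_*^2 - |y_*|\sqrt{y_*^2 - 2} = 1 + J^2/(2\mu^2)$, whence $\Psi_*(y_*) - (y_*+\mu/J)^2 = \log(\mu/J)$. Adding the $\rho$-contribution yields $-\log J - \tfrac12 + \tfrac12\log D'(0)$, which matches (\ref{eq:cpx2}). Uniqueness of the maximizer follows from strict concavity in $\rho$ together with the fact that the inner equation $\Psi_*'(y) = 2(y+\mu/J)$ has a unique solution in the region $y < -\sqrt 2$ on which the inner problem is strictly concave.
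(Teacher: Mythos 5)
Your proof is correct. The paper itself does not reproduce a proof of this lemma (it is cited from \cite{AZ20}*{Example 2}), and for the SRC analogue (Lemma~\ref{x-11}) the paper takes exactly the route you do: change variables to $v$, separate the $\rho$-only piece, and defer the inner $(v,y)$-optimization to \cite{AZ20}. Your contribution is to fill in that inner optimization in the LRC setting, where the extra wrinkle---absent in the SRC case---is that $\beta=\beta(\rho^2)$ depends on $\rho$, so the decoupling of the $\rho$-maximization is not immediate. The decisive observation, that $\tfrac{v_*^2}{2}+Cw_*^2$ collapses to the $\beta$-free quantity $(y_*+\mu/J)^2$, is exactly what makes this work, and your bookkeeping ($|y_*|+\sqrt{y_*^2-2}=2\mu/J$, $y_*^2-|y_*|\sqrt{y_*^2-2}=1+J^2/(2\mu^2)$, $(y_*+\mu/J)^2=J^2/(4\mu^2)$) checks out. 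Two points you gloss over but which do go through: (i) Assumption III does indeed give $J^2/2-\beta^2>0$ because for LRC structure functions $D''\le 0$ and $D'$ is decreasing, so $\alpha\le 0$ and $\beta\le 0$, hence the extra term $\alpha\beta\rho^2\ge 0$ can be dropped from the inequality; (ii) for uniqueness, one should note explicitly that the stationary equation $\Psi_*'(y)=2(y+\mu/J)$ has no roots with $y\ge-\sqrt2$ (on $|y|<\sqrt2$ it forces $y=-2\mu/J<-\sqrt2$, a contradiction; on $y>\sqrt2$ the signs disagree), and that the Hessian of the inner problem at the critical point is negative definite---one computes $g_{vv}=-C<0$ and $g_{vv}g_{yy}-g_{vy}^2=C\bigl(2-\Psi_*''(y_*)\bigr)>0$ using the identity $1+2C\beta^2/J^2=C$---which together with the decay of the inner function at infinity pins down the unique global maximizer.
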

The following lemma is a direct consequence of the above results.

\begin{lemma}
\label{x-7}
Assume  $\mu>\sqrt{-2 D^{\prime \prime}(0)}$ and recall the values of $\rho_*$ and $u_*$ as in \pref{eq:ruy}. For $0\le R_{1} < R_{2}\le \8$, and open set $E\subset\mathbb{R}$,  if $\left(\rho_{*}, u_{*}\right) \notin  \overline{(R_1,R_2)} \times \bar E$, then
\begin{align*}
\lim _{N \rightarrow \infty}  \mathbb{E} [\operatorname{Crt}_{N}\left(E, B_{N}\left(R_{1}, R_{2}\right)\right)]=0.\end{align*}
\end{lemma}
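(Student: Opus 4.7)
\textbf{Proof proposal for Lemma \ref{x-7}.} The plan is to mimic the argument used for Lemma \ref{x-12} in the SRC setting, replacing the SRC complexity input with its LRC analogue (Theorem \ref{x-5}) and the SRC variational analysis with Lemma \ref{x-6}. The starting observation is that if one combines the complexity constant in Theorem \ref{x-5} with the maximum value in \eqref{eq:cpx2}, one obtains
\begin{align*}
\frac12\log[-4D''(0)]-\frac12\log D'(0)+\frac12+\psi_*(\rho_*,u_*,y_*)=0,
\end{align*}
which is consistent with Theorem \ref{h-1}(i). Consequently, to prove $\ez\Crt_N(E,B_N(R_1,R_2))\to 0$ it suffices, by Theorem \ref{x-5}, to establish the strict inequality
\begin{align*}
\sup_{(\rho,u,y)\in F}\psi_*(\rho,u,y)<\psi_*(\rho_*,u_*,y_*),
\end{align*}
where $F=(R_1,R_2)\times E\times\rz$.

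The next step is the strict inequality itself. By continuity of $\psi_*$, the supremum over the open set $F$ equals the supremum over its closure $\bar F=\overline{(R_1,R_2)}\times\bar E\times\rz$. The hypothesis $(\rho_*,u_*)\notin\overline{(R_1,R_2)}\times\bar E$ therefore implies $(\rho_*,u_*,y_*)\notin\bar F$. Since Lemma \ref{x-6} asserts that $(\rho_*,u_*,y_*)$ is the \emph{unique} global maximizer of $\psi_*$ on $\rz_+\times\rz\times\rz$, the conclusion would be immediate from compactness if $\bar F$ were compact. This is the only non-routine point, because $y$ ranges over all of $\rz$ and $R_2$ may be $\infty$.

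To handle the non-compactness, I would invoke exponential tightness in the spirit of \cite{AZ20}*{Section 4} (or Lemma \ref{le:exptt} above, whose LRC analogue is available from the same source). Concretely, from \eqref{ab3}, one sees that $\psi_*(\rho,u,y)\to-\infty$ as $|y|\to\infty$ (uniformly on compacts in $(\rho,u)$), as $|u|\to\infty$ (for any fixed bounded $\rho$), and as $\rho\to\infty$ (the $-\mu^2\rho^2/(2D'(0))$ term dominates). Hence for every $\de>0$ one can choose $K=K(\de)>0$ large enough so that
\begin{align*}
\sup_{(\rho,u,y)\in F,\,(\rho,u,y)\notin[0,K]^3}\psi_*(\rho,u,y)<\psi_*(\rho_*,u_*,y_*)-\de,
\end{align*}
reducing the problem to a supremum over a compact set $F\cap([0,K]\times[-K,K]\times[-K,K])$, whose closure remains disjoint from $(\rho_*,u_*,y_*)$. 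On this compact set the supremum is attained at a point that is not the global maximizer, and uniqueness from Lemma \ref{x-6} then forces the strict inequality $\sup_F\psi_*<\psi_*(\rho_*,u_*,y_*)$.

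Combining this strict gap with Theorem \ref{x-5} and the cancellation identity above gives $\limsup_{N\to\infty}\tfrac1N\log\ez\Crt_N(E,B_N(R_1,R_2))<0$, whence $\ez\Crt_N(E,B_N(R_1,R_2))\to0$ as desired. The main (and essentially only) obstacle is the non-compactness of $\bar F$; everything else is a direct transcription of the SRC argument from Lemma \ref{x-12} to the LRC variational formula, and the key inputs—Theorem \ref{x-5} and Lemma \ref{x-6}—are already in place.
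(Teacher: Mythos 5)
Your proposal is correct and follows essentially the same route as the paper: combine the complexity exponent of Theorem \ref{x-5} with the cancellation against $\psi_*(\rho_*,u_*,y_*)$ from Lemma \ref{x-6}, and conclude the exponent is strictly negative because the unique maximizer $(\rho_*,u_*,y_*)$ lies outside $\bar F$. Your explicit handling of the non-compactness of $\bar F$ (via $\psi_*\to-\infty$ at infinity) fills in a step that the paper's terser argument implicitly takes for granted when it cites only the closedness of $\overline{(R_1,R_2)}\times\bar E$.
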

\begin{proof}
By Lemma \ref{x-6}, if $\left(\rho_{*}, u_{*}\right) \notin  \overline{(R_1,R_2)} \times\bar E$, the value of $\sup _{(\rho, u, y) \in F} \psi_{*}(\rho, u, y)$ is  strictly smaller than $\psi_{*}\left(\rho_{*}, u_{*}, y_{*}\right)=-\log \sqrt{-4 D^{\prime \prime}(0)}-\frac{1}{2}+\frac{1}{2} \log D^{\prime}(0)$ since $\overline{(R_1,R_2)} \times \bar E$ is a closed set. Therefore, the right-hand side of \pref{eq:cpx2} is strictly negative, which directly yields the assertion.
\end{proof}
Now let us prove the main results of this section.

\begin{proof}[Proof of Theorem \ref{h-2}]
By Proposition \ref{z-1}, we know this model has at least  one global minimum in $\rz^N$ almost surely. The first triviality conclusion directly follows from (\ref{R1}) and Markov's inequality.

For any $\varepsilon>0$, taking $E=[u_*-\varepsilon,u_*+\varepsilon]^{c}$, $R_1=0$ and $R_2=\infty$, we see that $\left(\rho_{*}, u_{*}\right)$ does not belong to   $\rz_+ \times \bar E$. By Lemma \ref{x-7},
$$
\begin{aligned}
\mathbb{P}\left(\left|\frac{1}{N} H_{N}\left(x^{*}\right)-u_{*}\right|>\varepsilon\right)
&\leq\mathbb{E} [\operatorname{Crt}_{N}\left(E, B_{N}\left(0, \infty\right)\right)] \rightarrow 0
\end{aligned}
$$
as $N\to\8$, which proves \eqref{eq:energy}. Similarly,
$$
\begin{aligned}
&\mathbb{P}\left(\left|\frac{\|x^{*}\|}{\sqrt{N}}-\rho_{*}\right|>\varepsilon\right)=\mathbb{P}\left(\frac{\|x^{*}\|}{\sqrt{N}}\in [\rho_*-\eps,\rho_*+\eps]^c \right) \\
&\leq\mathbb{E} [\operatorname{Crt}_{N}\left(\rz, (0,\rho_*-\eps)\right)]+ \mathbb{E} [\operatorname{Crt}_{N}\left(\rz, (\rho_*+\eps,\8)\right)] \to 0
\end{aligned}
$$
as $N\to\8$, and (\ref{g1}) follows.

Turning to (\ref{gg1}), for any $\delta>0$, let us define
$$
 \Delta_{\delta}=\left\{x \in \mathbb{R}^{N}:L_{N}\left(\nabla^{2}H_{N}(x)\right)\notin B\left(\si_{c_l,r_l}, \delta\right)\right\},
$$
with $c_l=\mu+\frac{-2D''(0)}{\mu}$ and $r_l=\sqrt{-8D''(0)}$. Using the Kac--Rice formula, the spherical coordinates with $\rho=\frac{\|x\|}{\sqrt{N}}$, and conditioning on $Y=u$,
\begin{align}
\label{aa1}
&\mathbb{E}\left[\#\left\{x \in  \Delta_{\delta}: \nabla H_{N}(x)=0\right\} \right] \\
&=\int_{\mathbb{R}^{N}} p_{\nabla H_{N}(x)}(0) \mathbb{E}\left[\left|\operatorname{det}\left(\nabla^{2} H_{N}(x)\right)\right| \indi_{\Delta_{\delta}}(x)\right]\dd  x\nonumber\\
&=S_{N-1} N^{(N-1) / 2} \int_{0}^{\infty} \int_{-\infty}^{\infty} \mathbb{E}[|\operatorname{det} G|\indi_{\left\{L_{N}\left(G\right)\notin B\left(\si_{c_l,r_l}, \delta\right)\right\}}] p_{\nabla H_{N}(x)}(0) f_{Y}(u) \rho^{N-1} \dd  u \dd  \rho \notag,
\end{align}
where $p_{\nabla H_{N}(x)}(0)$ is the density of $\nabla H_{N}(x)$ at 0, and $f_{Y}(u)$ is the density of $Y$.
We want to show the equivalence of the empirical spectral measure of $G$ and that of $G_{**}$ in the large $N$ limit. Enumerating eigenvalues in the ascending order $\la_1(G)\le \dots\le \la_N(G)$, by the interlacement theorem, for all $j \in\{1, \ldots, N-1\}$,
\begin{align*}
\lambda_{j}(G) \leq \lambda_{j}\left(G_{* *}\right) \leq \lambda_{j+1}(G).
\end{align*}
For any function $h$ with $\|h\|_{L} \leq 1, \|h\|_{\infty} \leq 1$, we deduce
\begin{align*}
    &\left|\frac{1}{N} \sum_{i=1}^{N} h\left(\lambda_{i}\left(G\right)\right)-\frac{1}{N-1} \sum_{i=1}^{N-1} h\left(\lambda_{i}\left(G_{* *}\right)\right)\right| \\
    &\le \frac{1}{N(N-1)} \sum_{i=1}^{N-1} \Big[(N-1)|h(\la_{i+1}(G))-h(\la_i(G_{**}))| + |h(\la_1(G))-h(\la_i(G_{**}))| \Big]\\
    &\le \frac1N \sum_{i=1}^{N-2} [\la_{i+1}(G)-\la_i(G_{**})]+\frac1N\left[|h(\la_N(G))-h(\la_{N-1}(G_{**}))|\right]+ \frac2N \\
    &\le \frac{\la_{N-1}(G_{**})-\la_1(G_{**})+4}{N} \stackrel{d}= \frac{\sqrt{\frac{-4(N-1)D''(0)}N}[\la_{N-1}(\GOE_{N-1})-\la_{ 1}(\GOE_{N-1})]+4}{N}.
\end{align*}
For $N$ large enough, we have
\[
    \left\{L_{N}\left(G\right)\notin B\left(\si_{c_l,r_l}, \delta\right),~ \la_{N-1}(G_{**})-\la_1(G_{**})\le 2K\right\}\subset \left\{L_{N-1}\left(G_{**}\right)\notin B\left(\si_{c_l,r_l}, \frac\delta2\right) \right\}.
\]
Note that
\[
\pz(\la_{N-1}(G_{**})-\la_1(G_{**})\ge 2K)    \le \pz\Big(\la^*(\GOE_{N-1})\ge \frac{K}{\sqrt{-4D''(0)}}\Big),
\]
where $\la^*(\GOE_{N-1})$ is the operator norm of $\GOE_{N-1}$. Using the large deviation estimate \pref{eq:opld}, we may argue as in \cite[Lemmas 4.5 and 4.7]{AZ20} and choose a large $K$ to rewrite \eqref{aa1} as
\begin{align*}
    &\mathbb{E}\left[\#\left\{x \in  \Delta_{\delta}: \nabla H_{N}(x)=0\right\} \right]=S_{N-1} N^{(N-1) / 2} \int_{0}^{\infty} \int_{-\infty}^{\infty}  p_{\nabla H_{N}(x)}(0) f_{Y}(u) \rho^{N-1}  \\
&\qquad \times \mathbb{E}\Big[|\operatorname{det} G|\indi_{\left\{L_{N}\left(G\right)\notin B\left(\si_{c_l,r_l}, \delta\right),~ \la_{N-1}(G_{**})-\la_1(G_{**})\le 2K\right\}}\Big] \dd  u \dd  \rho + o(1)\\
&\le S_{N-1} N^{(N-1) / 2} \int_{0}^{\infty} \int_{-\infty}^{\infty}  p_{\nabla H_{N}(x)}(0) f_{Y}(u) \rho^{N-1} \\
&\qquad \times \mathbb{E}\Big[|\operatorname{det} G|\indi_{\left\{L_{N-1}\left(G_{**}\right)\notin B\left(\si_{c_l,r_l}, \frac\delta2\right) \right\}}\Big] \dd  u \dd  \rho + o(1).
\end{align*}
Now, let us show that
\begin{align}
\label{aa2}
&\lim _{N \rightarrow \infty}S_{N-1} N^{(N-1) / 2} \int_{0}^{\infty} \int_{-\infty}^{\infty} \mathbb{E}[|\operatorname{det} G|\indi_{\left\{L_{N-1}\left(G_{* *}\right)\notin B\left(\si_{c_l,r_l}, \frac\delta2\right)\right\}}] \\
&\quad\quad\quad\quad\quad\quad\quad\quad\quad \times p_{\nabla H_{N}(x)}(0) f_{Y}(u) \rho^{N-1} \dd  u \dd  \rho=0 \notag,
\end{align}
which would imply \eqref{gg1} by Markov's inequality.
To prove \eqref{aa2}, we first observe the following decomposition
\begin{align}
\label{ab1}
&S_{N-1} N^{(N-1) / 2} \int_{0}^{\infty} \int_{-\infty}^{\infty} \mathbb{E}[|\operatorname{det} G|\indi_{\left\{L_{N-1}\left(G_{* *}\right)\notin B\left(\si_{c_l,r_l}, \frac\delta2\right)\right\}}] p_{\nabla H_{N}(x)}(0) f_{Y}(u) \rho^{N-1} \dd  u \dd  \rho \\
&=S_{N-1} N^{(N-1) / 2} \int_{0}^{\infty} \int_{-\infty}^{\infty} \mathbb{E}[|\operatorname{det} G|\indi_{\left\{L_{N-1}\left(G_{* *}\right)\notin B\left(\si_{c_l,r_l},\frac\delta2\right),|z_{3}^{\prime}-y_*|\leq\de' \right\}}]\nonumber \\
&\quad\quad\quad\quad\quad\quad\quad\quad\quad\times p_{\nabla H_{N}(x)}(0) f_{Y}(u) \rho^{N-1} \dd  u \dd  \rho\nonumber \\
&\quad+S_{N-1} N^{(N-1) / 2} \int_{0}^{\infty} \int_{-\infty}^{\infty} \mathbb{E}[|\operatorname{det} G|\indi_{\left\{L_{N-1}\left(G_{* *}\right)\notin B\left(\si_{c_l,r_l}, \frac\delta2\right),|z_{3}^{\prime}-y_*|> \de'\right\}}]\nonumber \\
&\quad\quad\quad\quad\quad\quad\quad\quad\quad\times p_{\nabla H_{N}(x)}(0) f_{Y}(u) \rho^{N-1} \dd  u \dd  \rho
\nonumber\\
&=:\sfK_N^1+\sfK_N^2 \notag,
\end{align}
where $\de'=\frac{\delta}{4\sqrt{-4D''(0)}}$. For $\sfK_N^2$, conditioning on $z_{3}^{\prime}=y$, we find
\begin{align}
\label{ab2}
& \sfK_N^2 \leq S_{N-1} N^{(N-1) / 2} \int_{0}^{\infty} \int_{-\infty}^{\infty}\left(\int_{-\infty}^{y_*-\de'}+\int_{y_*+\de'}^{\infty}\right) \mathbb{E}[|\operatorname{det} G|] \\&\quad\quad\quad\quad\quad\quad\quad\quad\quad \times g_{z_{3}^{\prime}}(y) p_{\nabla H_{N}(x)}(0) f_{Y}(u) \rho^{N-1}\dd  y \dd  u \dd  \rho \notag,
\end{align}
where $g_{z_{3}^{\prime}}(y)$ is the p.d.f.~of $z_{3}^{\prime}$ and we have written $\ez[|\det G|]$ for $\ez[|\det G| | z_3'=y]$, which will be our convention in the following. By the remark after Theorem \ref{x-5}, we may slightly modify the proof of \cite[Theorem 1.2]{AZ20} and deduce
\begin{align}
\label{ab4}
\limsup_{N \rightarrow \infty} \frac{1}{N} \log \sfK_N^2 \le \frac{1}{2} \log \left[-4 D^{\prime \prime}(0)\right]-\frac{1}{2} \log D^{\prime}(0)+\frac{1}{2}+\sup _{(\rho, u, y) \in F} \psi_{*}(\rho, u, y),
\end{align}
where $F=\left\{(\rho, u, y): \rho \in\left(0, \infty\right), u \in \mathbb{R}, y \in \left[y_*-\de',y_*+\de'\right]^{c} \right\}$, and the function $\psi_{*}$ is given in (\ref{ab3}).
Due to Lemma \ref{x-6}, the value of the right-hand side of (\ref{ab4}) is strictly smaller than $0$, which directly implies
\begin{align}
\label{ab5}
\lim _{N \rightarrow \infty} \sfK_N^2=0.
\end{align}
For the first term, since $G_{* *}=\sqrt{-4 D^{\prime \prime}(0)}\left(\sqrt{\frac{N-1}{N}} \GOE_{N-1}-z_{3}^{\prime} I_{N-1}\right)$, on the event $\{|z_{3}^{\prime}-y_*|\leq \de'\}$, we can choose $N$ large enough so that
$$\left\{L_{N-1}\left(G_{* *}\right)\notin B\left(\si_{c_l,r_l}, \frac\delta2\right)\right\} \subset \left\{L_{N-1}\left(\GOE_{N-1}\right)\notin B\left(\si_{\rm{sc}}, \de'\right)\right\}.$$
It follows that
\begin{align}
\label{ab6}
& \sfK_N^1 \leq S_{N-1} N^{(N-1) / 2} \int_{0}^{\infty} \int_{-\infty}^{\infty} \mathbb{E}[|\operatorname{det} G|\indi_{\left\{L_{N-1}\left(\GOE_{N-1}\right)\notin B\left(\si_{\rm{sc}}, \de'\right)\right\}}] \\
&\quad\quad\quad\quad\quad\quad\quad\quad\quad \times p_{\nabla H_{N}(x)}(0) f_{Y}(u) \rho^{N-1} \dd  u \dd  \rho \notag.
\end{align}
Combining \eqref{sk6} and (\ref{ab6}) gives
\begin{align}
\label{aa3}
\sfK_N^1 &\leq S_{N-1} N^{(N-1) / 2} \int_{0}^{\infty} \int_{-\infty}^{\infty} \mathbb{E}[|\operatorname{det} G|\indi_{\left\{L_{N-1}\left(\GOE_{N-1}\right)\notin B\left(\si_{\rm{sc}}, \de' \right)\right\}}] \\
&\quad\quad\quad\quad\quad\quad\quad\quad\quad \times p_{\nabla H_{N}(x)}(0) f_{Y}(u) \rho^{N-1} \dd  u \dd  \rho\nonumber \\
&\leq S_{N-1} N^{(N-1) / 2} \int_{0}^{\infty}\int_{-\infty}^{\infty}p_{\nabla H_{N}(x)}(0) f_{Y}(u) \rho^{N-1} \nonumber\\
&\quad\times\mathbb{E}\left[\Big|\left[-4 D^{\prime \prime}(0)\right]^{(N-1) / 2} z_{1}^{\prime} \prod_{j=1}^{N-1}\left(\sqrt{\frac{N-1}{N}}\lambda_{j}-z_{3}^{\prime}\right)\Big|\indi_{\left\{L_{N-1}\left(\GOE_{N-1}\right)\notin B\left(\si_{\rm{sc}}, \de'\right)\right\}}\right] \dd  u \dd  \rho \nonumber\\
&\quad+S_{N-1} N^{(N-1) / 2} \int_{0}^{\infty}\int_{-\infty}^{\infty} p_{\nabla H_{N}(x)}(0) f_{Y}(u) \rho^{N-1}\nonumber\\
&\quad\times\mathbb{E}\left[\Big|\frac{\left[-4 D^{\prime \prime}(0)\right]^{N / 2}}{2 N} \sum_{k=1}^{N-1} Z_{k}^{2} \prod_{j \neq k}^{N-1}\left(\sqrt{\frac{N-1}{N}}\lambda_{j}-z_{3}^{\prime}\right)\Big|\indi_{\left\{L_{N-1}\left(\GOE_{N-1}\right)\notin B\left(\si_{\rm{sc}}, \de'\right)\right\}}\right]  \dd  u \dd  \rho \nonumber\\
&=: S_{N-1} N^{(N-1) / 2}\left(\sfK_{N}^{11} +\sfK_{N}^{12} \right) \notag.
\end{align}
By \cite[Lemmas 4.6 and 4.8]{AZ20}, we have
$$
\limsup _{N \rightarrow \infty} \frac{1}{N} \log \sfK_{N}^{11} =-\infty \text{ and } \limsup _{N \rightarrow \infty} \frac{1}{N} \log \sfK_{N}^{12} =-\infty.
$$
Recalling \pref{eq:snlim}, we find $\lim_{N\to\8} \sfK_N^1=0$. Together with \eqref{ab5}, we have proved \eqref{aa2}.


Let us proceed to prove \eqref{zx}. Recall that $c_l=\mu+\frac{-2D''(0)}{\mu}$ and $r_l=\sqrt{-8D''(0)}$. For any $\delta>0$, we set
$$
\Theta_{\delta}=\left\{x \in \mathbb{R}^{N}:\left|\lambda_{\min }\left(\nabla^{2} H_{N}(x)\right)+r_l-c_l\right|>\sqrt{-4D^{\prime\prime}}\delta\right\}.
$$
Clearly, $\Theta_{\delta}=\Theta_{\delta}^1\cup \Theta_{\delta}^2$, where
\begin{align}
\Theta_{\delta}^1&=\left\{x \in \mathbb{R}^{N}:\lambda_{\min }\left(\nabla^{2} H_{N}(x)\right)+r_l-c_l>\sqrt{-4D^{\prime\prime}}\delta\right\},\nonumber\\
\Theta_{\delta}^2&=\left\{x \in \mathbb{R}^{N}:\lambda_{\min }\left(\nabla^{2} H_{N}(x)\right)+r_l-c_l<-\sqrt{-4D^{\prime\prime}}\delta\right\}.\nonumber
\end{align}
We will show that both $\mathbb{E}\left[\#\left\{x \in \Theta_{\delta}^1: \nabla H_{N}(x)=0\right\}\right]$ and $\mathbb{E}\left[\#\left\{x \in \Theta_{\delta}^2: \nabla H_{N}(x)=0\right\}\right]$ converge to $0$ as $N\to \8$, which would complete the proof of \eqref{zx}.

We first deal with $\Theta_\de^1$. Arguing as in (\ref{aa1}), and using the same notations as
there, we find
\begin{align}
\label{sk3}
&\mathbb{E}\left[\#\left\{x \in \Theta_{\delta}^1: \nabla H_{N}(x)=0\right\} \right] \\
&=S_{N-1} N^{(N-1) / 2} \int_{0}^{\infty} \int_{-\infty}^{\infty} \mathbb{E}[|\operatorname{det} G|\indi_{\left\{\lambda_{\min }\left(G\right)+r_l-c_l>\sqrt{-4D^{\prime\prime}}\delta\right\}}]]\nonumber \\
&\quad\quad\quad\quad\quad\quad\quad\quad\quad \times p_{\nabla H_{N}(x)}(0) f_{Y}(u) \rho^{N-1} \dd  u \dd  \rho\nonumber \\
&=S_{N-1} N^{(N-1) / 2} \int_{0}^{\infty} \int_{-\infty}^{\infty} \mathbb{E}[|\operatorname{det} G|\indi_{\left\{\lambda_{\min }\left(G\right)+r_l-c_l>\sqrt{-4D^{\prime\prime}}\delta,|z_{3}^{\prime}-y_*|\leq\frac{\delta}{2}\right\}}]\nonumber \\
&\quad\quad\quad\quad\quad\quad\quad\quad\quad \times p_{\nabla H_{N}(x)}(0) f_{Y}(u) \rho^{N-1} \dd  u \dd  \rho\nonumber \\
&\ \ +S_{N-1} N^{(N-1) / 2} \int_{0}^{\infty} \int_{-\infty}^{\infty} \mathbb{E}[|\operatorname{det} G|\indi_{\left\{\lambda_{\min }\left(G\right)+r_l-c_l>\sqrt{-4D^{\prime\prime}}\delta,|z_{3}^{\prime}-y_*|>\frac{\delta}{2}\right\}}]\nonumber \\
&\quad\quad\quad\quad\quad\quad\quad\quad\quad \times p_{\nabla H_{N}(x)}(0) f_{Y}(u) \rho^{N-1} \dd  u \dd  \rho
\nonumber\\
&=:\sfH_{N}^{1} +\sfH_{N}^{2} \nonumber ,
\end{align}
where $y_{*}=-\frac{\mu}{\sqrt{-4 D^{\prime \prime}(0)}}-\frac{\sqrt{-D^{\prime \prime}(0)}}{\mu}$ is defined in (\ref{eq:ruy}). For $\sfH_{N}^{2} $, conditioning on $z_{3}^{\prime}=y$, we find
\begin{align}
\label{sk1}
&\sfH_{N}^{2} \leq S_{N-1} N^{(N-1) / 2} \int_{0}^{\infty} \int_{-\infty}^{\infty}\left(\int_{-\infty}^{y_*-\frac{\delta}{2}}+\int_{y_*+\frac{\delta}{2}}^{\infty}\right) \mathbb{E}[|\operatorname{det} G|] \\
&\quad\quad\quad\quad\quad\quad\quad\quad\quad \times g_{z_{3}^{\prime}}(y) p_{\nabla H_{N}(x)}(0) f_{Y}(u) \rho^{N-1}\dd  y \dd  u \dd  \rho \nonumber,
\end{align}
where $g_{z_{3}^{\prime}}(y)$ is the p.d.f.~of $z_{3}^{\prime}$. By the same argument as for (\ref{ab4}) and (\ref{ab5}), we deduce that
\begin{align}
\label{sk2}
\lim _{N \rightarrow \infty} \sfH_{N}^{2} =0.
\end{align}
Due to the interlacement theorem, we have
$
\lambda_{\min}(G) \leq \lambda_{\min}\left(G_{* *}\right),
$
which implies that
\begin{align}
\label{sk4}
&\sfH_{N}^{1} \leq S_{N-1} N^{(N-1) / 2} \int_{0}^{\infty} \int_{-\infty}^{\infty} \mathbb{E}[|\operatorname{det} G|\indi_{\left\{\lambda_{\min }\left(G_{* *}\right)+r_l-c_l>\sqrt{-4D^{\prime\prime}}\delta,|z_{3}^{\prime}-y_*|\leq\frac{\delta}{2}\right\}}] \\
&\quad\quad\quad\quad\quad\quad\quad\quad\quad\times p_{\nabla H_{N}(x)}(0) f_{Y}(u) \rho^{N-1} \dd  u \dd  \rho \nonumber.
\end{align}
Recalling that $G_{* *}=\sqrt{-4 D^{\prime \prime}(0)}\left(\sqrt{\frac{N-1}{N}} \GOE_{N-1}-z_{3}^{\prime} I_{N-1}\right)$, when $N$ is large enough, if $|z_{3}^{\prime}-y_*|\leq \frac{\delta}{2}$, then we have
$$\Big\{\lambda_{\min }\left(G_{* *}\right)+r_l-c_l>\sqrt{-4D^{\prime\prime}}\delta\Big\}\subset \Big\{\lambda_{\min }\left(\GOE_{N-1}\right)+\sqrt{2}>\frac{\delta}{4}\Big\}.$$
Together with (\ref{sk6}), we deduce
\begin{align}
    \label{sk7}
&\sfH_{N}^{1} \leq S_{N-1} N^{(N-1) / 2} \int_{0}^{\infty} \int_{-\infty}^{\infty} \mathbb{E}[|\operatorname{det} G|\indi_{\left\{\lambda_{\min }\left(\GOE_{N-1}\right)+\sqrt{2}>\frac{\delta}{4}\right\}}] \notag\\
&\quad\quad \times p_{\nabla H_{N}(x)}(0) f_{Y}(u) \rho^{N-1} \dd  u \dd  \rho\\
&\leq S_{N-1} N^{(N-1) / 2} \int_{0}^{\infty}\int_{-\infty}^{\infty} \mathbb{E}\Big[\Big|\left[-4 D^{\prime \prime}(0)\right]^{(N-1) / 2} z_{1}^{\prime} \prod_{j=1}^{N-1}\Big(\sqrt{\frac{N-1}{N}}\lambda_{j}-z_{3}^{\prime}\Big) \nonumber \\
&\quad-\frac{\left[-4 D^{\prime \prime}(0)\right]^{N / 2}}{2 N} \sum_{k=1}^{N-1} Z_{k}^{2} \prod_{j \neq k}^{N-1}\Big(\sqrt{\frac{N-1}{N}}\lambda_{j}-z_{3}^{\prime}\Big)\Big|\indi_{\left\{\lambda_{\min }\left(\GOE_{N-1}\right)+\sqrt{2}>\frac{\delta}{4}\right\}}\Big]\notag\\
&\quad\quad \times p_{\nabla H_{N}(x)}(0) f_{Y}(u) \rho^{N-1} \dd  u \dd  \rho \nonumber\\
&\leq S_{N-1} N^{(N-1) / 2} \int_{0}^{\infty}\int_{-\infty}^{\infty} \mathbb{E}\Big[\Big|\left[-4 D^{\prime \prime}(0)\right]^{(N-1) / 2} z_{1}^{\prime} \prod_{j=1}^{N-1}\Big(\sqrt{\frac{N-1}{N}}\lambda_{j}-z_{3}^{\prime}\Big)\Big|\nonumber\\
&\quad\quad\times  \indi_{\left\{\lambda_{\min }\left(\GOE_{N-1}\right)+\sqrt{2}>\frac{\delta}{4}\right\}}\Big] p_{\nabla H_{N}(x)}(0) f_{Y}(u) \rho^{N-1} \dd  u \dd  \rho \nonumber\\
&\quad+S_{N-1} N^{(N-1) / 2} \int_{0}^{\infty}\int_{-\infty}^{\infty} \mathbb{E}\Big[\Big|\frac{\left[-4 D^{\prime \prime}(0)\right]^{N / 2}}{2 N} \sum_{k=1}^{N-1} Z_{k}^{2} \prod_{j \neq k}^{N-1}\Big(\sqrt{\frac{N-1}{N}}\lambda_{j}-z_{3}^{\prime}\Big)\Big|\nonumber\\
&\quad\quad\times \indi_{\left\{\lambda_{\min }\left(\GOE_{N-1}\right)+\sqrt{2}>\frac{\delta}{4}\right\}}\Big] p_{\nabla H_{N}(x)}(0) f_{Y}(u) \rho^{N-1} \dd  u \dd  \rho \nonumber\\
&=: S_{N-1} N^{(N-1) / 2}\left(\sfH_{N}^{11} +\sfH_{N}^{12} \right) \nonumber.
\end{align}
The LDP for the empirical measures of GOE matrices \eqref{ac2} implies that for any $\delta>0$, there exists a large $N_0$ and a small $c>0$ such that for all $N>N_0$,
$$\mathbb{P}\left(\lambda_{\min}\left(\GOE_{N}\right)+\sqrt{2}>\delta\right) \leq e^{-cN^{2}}.
$$
Therefore, following the same argument as for \cite[Lemmas 4.6 and 4.8]{AZ20}, we have
$$
\limsup _{N \rightarrow \infty} \frac{1}{N} \log \sfH_{N}^{11} =-\infty \text{ and } \limsup _{N \rightarrow \infty} \frac{1}{N} \log \sfH_{N}^{12} =-\infty,
$$
which yields that $\lim _{N \rightarrow \infty} \sfH_{N}^{1} =0$. Combining with (\ref{sk2}), this completes the proof of
\begin{align}
\label{519}
\lim _{N \rightarrow \infty}\mathbb{E}\left[\#\left\{x \in \Theta_{\delta}^1: \nabla H_{N}(x)=0\right\}\right]=0.
\end{align}
It remains to consider $\Theta_\de^2$. This requires harder work. Recall the representation of Hessian as in \pref{eq:gu} and $(\rho_*,u_*,y_*)$ as in Lemma \ref{x-6}. For $\mu>\sqrt{-2D''(0)}$, we may choose $\de>0$ small enough so that $\rho_*-\de>0, y_*+\de<-\sqrt2$. As in \cite[(4.8)]{AZ20}, we have the conditional distribution of $z_1'$ given $z_3'=y$
\begin{align}\label{eq:z13con0}
    z_1'| \{z_3'=y\} \sim N \Big (\bar \sfa, \frac{\sfb^2}{N}\Big),
\end{align}
where
    \begin{align*}
     \bar \sfa &= \bar \sfa(\rho,u,y)
     =\frac{-2D''(0)\al\rho^2( u-\frac{\mu\rho^2}{2}+\frac{\mu D'(\rho^2) \rho^2}{D'(0) } )}{(-2D''(0)-\bt^2) \sqrt{D(\rho^2)-\frac{D'(\rho^2)^2 \rho^2}{D'(0)} }}
       +\frac{\al\bt\rho^2 \mu }{-2D''(0)-\bt^2}\notag \\
       &\qquad  -\frac{(-2D''(0)-\bt^2-\al\bt\rho^2)\sqrt{-4D''(0)} y}{-2D''(0)-\bt^2},\\
     \frac{\sfb^2}{N}&=\frac{\sfb(\rho)^2}{N} 
     =\frac{-4D''(0)}{N} +\frac{2D''(0)\al^2\rho^4}{N(-2D''(0)-\bt^2)}.
    \end{align*}
It follows that
\begin{align*}
    G(\rho,u,y)\stackrel{d}= \begin{pmatrix}
        \sqrt{-2D''(0)}\tilde z_1 &\xi^\sfT\\
        \xi& \sqrt{-4D''(0)} \Big(\sqrt{\frac{N-1}{N}}\GOE_{N-1}- y I_{N-1}\Big)
    \end{pmatrix},
\end{align*}
where $\sqrt{-2D''(0)}\tilde z_1\sim N \big (\bar \sfa, \frac{\sfb^2}{N}\big)$ and we have written $G$ for the conditional distribution $G|\{z_3'=y\}$ for simplicity. Direct computation (see \cite[Example 1]{AZ22}) yields
\begin{align}\label{eq:abbd}
    \bar\sfa(\rho_*,u_*,y_*)=-\sqrt{-4D''(0)}y_*, \ \ \sfb(\rho_*)^2>0.
\end{align}
By continuity, we may choose $\de$ small enough so that
\[
    \Big|-\sqrt2 y -\frac{\bar \sfa(\rho,u,y)}{\sqrt{-2D''(0)}}\Big| \le \frac12
\]
for all $(\rho,u,y)\in [\rho_*-\de,\rho_*+\de]\times [u_*-\de, u_*+\de]\times [y_*-\de,y_*+\de]$.  Using the Gaussian tail we find for $t\ge 1$,
\begin{align}\label{eq:z1tail}
    &\pz\Big(\tilde z_1+\sqrt{2}y\le -t\Big)= \pz\Big(\tilde z_1- \frac{\bar \sfa(\rho,u,y)}{\sqrt{-2D''(0)}}\le  -\sqrt2 y -\frac{\bar \sfa(\rho,u,y)}{\sqrt{-2D''(0)}}-t\Big) \\
    &\le \pz\Big(\sqrt{\frac{\sfb(\rho)^2}{-2D''(0) N}}z_1\le -\frac{t}2\Big)\le e^{-\frac{-2D''(0)Nt^2}{8\sfb_*^2}} \nonumber,
\end{align}
where $\sfb_*=\sup_{\rho_*-\de\le \rho\le \rho_*+\de} \sfb(\rho)$ and $z_1\sim N(0,1)$. Note that $\ez[\sqrt{-2D''(0)}\tilde z_1]=\bar \sfa$, which is close to $-\sqrt{-4D''(0)}y_*$ in a small neighborhood of $(\rho_*,u_*,y_*)$ and $-\sqrt{-4D''(0)}y_*=c_l$ is the center of the limiting semicircle distribution. The next result shows that the first element of $G$ does not change the limit of the smallest eigenvalue at deterministic $(\rho,u,y)$ in a small neighborhood of the maximizer $(\rho_*,u_*,y_*)$.
\begin{lemma}
\label{51}
    For any small $\eps>0$ and small enough $\de>0$, there is a $\de_0>0$ such that for large $N$
    \[
    \pz\Big(\la_{\min}(G(\rho,u,y))\le c_l-r_l-\eps\Big)\le e^{-\de_0 N},
    \]
    uniformly for $(\rho,u,y)\in[\rho_*-\de,\rho_*+\de]\times [u_*-\de, u_*+\de]\times [y_*-\de,y_*+\de]$, where $c_l=\mu+\frac{-2D''(0)}{\mu}$ and $r_l=\sqrt{-8D''(0)}$.
\end{lemma}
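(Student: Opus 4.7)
The plan is to carry out a Baik--Ben Arous--P\'ech\'e (BBP) style analysis: use the Schur complement to reduce the spectral question for $G$ to a one-dimensional outlier equation, and then combine the large deviation estimates \eqref{pp6} and \eqref{ac2} for the GOE with Gaussian concentration for the defective entry $a := \sqrt{-2D''(0)}\tilde z_1$ and for the quadratic form $\xi^{\mathsf T}(G_{**}-\lambda_0 I)^{-1}\xi$. The key subcriticality is essentially geometric: by \eqref{eq:abbd}, the mean $\bar{\mathsf a}$ of the first entry satisfies $\bar{\mathsf a}(\rho_*,u_*,y_*)=-\sqrt{-4D''(0)}y_*=c_l$, which is exactly the center of the limiting semicircle, i.e.\ the BBP threshold on the lower edge.

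Set $\lambda_0 = c_l - r_l - \varepsilon$. Since $G_{**} = -\sqrt{-4D''(0)}\,y\, I_{N-1} + \sqrt{-4D''(0)(N-1)/N}\,\GOE_{N-1}$ has bulk spectrum concentrated on the interval centered at $-\sqrt{-4D''(0)}y$ with radius $r_l$, and $-\sqrt{-4D''(0)}y_*=c_l$, the LDP \eqref{pp6} gives, for $\delta$ small enough and uniformly in the stated neighborhood,
\[
\mathbb{P}(\Omega_1^c)\le e^{-\delta_1 N}, \qquad \Omega_1 := \{\lambda_{\min}(G_{**}) > \lambda_0 + \varepsilon/2\}.
\]
On $\Omega_1$ the matrix $G_{**}-\lambda_0 I$ is positive definite with $\|(G_{**}-\lambda_0 I)^{-1}\|\le 2/\varepsilon$, and the Schur complement (Haynsworth's inertia identity) yields
\[
\lambda_{\min}(G)>\lambda_0 \iff a-\lambda_0 > Q(\lambda_0), \qquad Q(\lambda_0):=\xi^{\mathsf T}(G_{**}-\lambda_0 I)^{-1}\xi.
\]

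It remains to show that $a-\lambda_0-Q(\lambda_0)$ is positive with exponentially high probability. Conditioning on $G_{**}$,
\[
\mathbb{E}[Q(\lambda_0)\mid G_{**}] \;=\; \tfrac{-2D''(0)}{N}\operatorname{tr}\bigl((G_{**}-\lambda_0 I)^{-1}\bigr),
\]
and by \eqref{ac2} together with the resolvent bound on $\Omega_1$ this trace concentrates exponentially around $-2D''(0)\int\frac{\mathrm{d}\sigma_{c,r_l}(x)}{x-\lambda_0}$ with $c=-\sqrt{-4D''(0)}y$. A direct Stieltjes computation using $r_l^2=-8D''(0)$ gives, at $(\rho_*,u_*,y_*)$,
\[
(\bar{\mathsf a}-\lambda_0) - \mathbb{E}[Q(\lambda_0)] \longrightarrow \tfrac12(r_l+\varepsilon)+\tfrac12\sqrt{2r_l\varepsilon+\varepsilon^2} \;\ge\; r_l/2,
\]
and by continuity this expected gap exceeds $r_l/4$ uniformly in the $\delta$-neighborhood for $\delta$ small enough. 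The conditional Gaussian tail for $a$ (from \eqref{eq:z13con0}, variance $\mathsf b^2/N$) gives $\mathbb{P}(|a-\bar{\mathsf a}|>r_l/16)\le e^{-c_1 N}$, and on $\Omega_1$, since $(G_{**}-\lambda_0 I)^{-1}$ has operator norm at most $2/\varepsilon$ and Frobenius norm at most $2\sqrt{N-1}/\varepsilon$, the Hanson--Wright inequality applied to the Gaussian vector $\xi$ delivers $\mathbb{P}(|Q(\lambda_0)-\mathbb{E}[Q\mid G_{**}]|>r_l/16 \mid G_{**})\le e^{-c_2 N}$. Intersecting these good events forces $a-\lambda_0-Q(\lambda_0)\ge r_l/8>0$, and hence $\lambda_{\min}(G)>\lambda_0$, giving the desired exponential bound.

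The conceptual heart of the argument is the subcriticality computation: verifying that $\bar{\mathsf a}$ lands exactly on the lower BBP threshold, so that the random fluctuation of order $1/\sqrt{N}$ can never push the smallest eigenvalue below the bulk edge by a constant. Technically, the main obstacle is obtaining the exponential (rather than only polynomial) rate for the quadratic-form concentration uniformly in the random $G_{**}$; this is exactly what the resolvent bound on $\Omega_1$ enables through Hanson--Wright. A subsidiary technical point is tracking the continuity of $\bar{\mathsf a}$, $c$, and the Stieltjes integral in $(\rho,u,y)$ to obtain uniform constants throughout the neighborhood, but this is routine given the explicit formulas for $\bar{\mathsf a},\mathsf b$ following \eqref{eq:msialbt}.
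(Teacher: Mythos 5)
Your proposal is correct, but it takes a genuinely different route from the paper. The paper proves the lemma by tridiagonalizing $-G$ through a sequence of Householder-style rotations, arriving at a Dumitriu--Edelman-type bidiagonal model $W$ with the defective entry $-\sqrt N(\tilde z_1+\sqrt2 y)$ sitting in the $(1,1)$ corner; it then decomposes $W$ as the negative of a Gram matrix of a bidiagonal factor plus a diagonal matrix, so that $\lambda_{\max}(W)$ is bounded by the maximum diagonal entry, and finishes with chi-square and Gaussian concentration plus a union bound. You instead condition on the lower-right block $G_{**}$ having a spectral gap above $\lambda_0=c_l-r_l-\varepsilon$ (exponentially likely by the GOE edge LDP), reduce $\lambda_{\min}(G)>\lambda_0$ to the scalar Schur-complement criterion $a-\lambda_0>Q(\lambda_0)$, concentrate $a$ by the explicit conditional Gaussian law \eqref{eq:z13con0} and $Q(\lambda_0)$ by Hanson--Wright uniformly over the good event via the resolvent bound, and verify the deterministic gap through the semicircle Stieltjes transform, with subcriticality coming from $\bar{\mathsf a}(\rho_*,u_*,y_*)=c_l$ (the BBP threshold) as established in \eqref{eq:abbd}. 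Both proofs are correct and run at the same level of difficulty. The paper's tridiagonal argument is self-contained and avoids Hanson--Wright and Stieltjes-transform calculus, trading them for chi-square concentration; yours isolates more cleanly why the threshold is not crossed --- the mean of the defective entry lands exactly on the BBP critical value, so its $O(1/\sqrt N)$ fluctuation cannot push an outlier a fixed distance below $c_l-r_l$ --- which is conceptually closer in spirit to the phrasing used in the paper's introduction about rank-one perturbations.

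One small remark: when you invoke \eqref{ac2} to say the trace concentrates around the Stieltjes integral, you should state explicitly that this is applied on the event $\Omega_1$, where $x\mapsto(x-\lambda_0)^{-1}$ restricted to the support of $L_{N-1}(G_{**})$ has Lipschitz constant at most $4/\varepsilon^2$, so that the bounded-Lipschitz metric controls the error; without the restriction to $\Omega_1$ the test function is unbounded near $\lambda_0$ and \eqref{ac2} does not directly apply. This is easily fixable and does not affect the validity of the argument.
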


\begin{proof}
    Let $V_1$ be an $(N-1)\times (N-1)$ random orthogonal matrix such that $V_1\xi=(\|\xi\|,0,...,0)^\sfT$. By the rotational invariance of GOE matrices,
    \begin{align*}
        &\begin{pmatrix}
            1&0\\
            0&V_1
        \end{pmatrix} (-G)
        \begin{pmatrix}
            1&0\\
            0&V_1^\sfT
        \end{pmatrix}
        \\
        &\stackrel{d}=\sqrt{\frac{{-2D''(0)}}{ N}} \begin{pmatrix}
            -\sqrt{N}(\tilde z_1+\sqrt2 y) & (\chi_{N-1},0,\dots,0)\\
            \begin{pmatrix}
                {\chi_{N-1}}\\
                0\\
                \vdots\\
                0
            \end{pmatrix}& \sqrt{{2(N-1)}}\GOE_{N-1}
        \end{pmatrix}
        +\sqrt{-4D''(0)} y I_N,
    \end{align*}
    where $\chi_{N-1}$ is an independent random variable following the $\chi$ distribution with $N-1$ degrees of freedom. Repeating the above operations with independent random orthogonal matrices $V_2,...,V_{N-1}$, we find a variant of the Dumitriu--Edelman tridiagonal representation \cite{DE02}
    \begin{align*}
        V(-G)V^\sfT &\stackrel{d}= \sqrt{\frac{-2D''(0)}{N}} \begin{pmatrix}
            -\sqrt{N}(\tilde z_1+\sqrt2 y)&\chi_{N-1}& &\\
            \chi_{N-1}& \sqrt2 \eta_2& \chi_{N-2}& &\\
             & \ddots &\ddots&\ddots&\\
             &&& \chi_1& \sqrt2\eta_{N}
        \end{pmatrix}
        +\sqrt{-4D''(0)} y I_N \notag\\
        &=: \sqrt{\frac{-2D''(0)}{N}} W +\sqrt{-4D''(0)} y I_N,
    \end{align*}
    where $V$ is an independent random orthogonal matrix, $\chi_i$ is an independent $\chi$ random variable with $i$ degrees of freedom, and $\eta_2,...,\eta_N$ are i.i.d.~standard normal random variables. We follow an idea from \cite{BV13} to estimate the largest eigenvalue of $W$. Observe that
    \begin{align*}
        W&=-\begin{pmatrix}
            0&\sqrt{\chi_{N-1}}& &&\\
            &-\sqrt{\chi_{N-1}}&\sqrt{\chi_{N-2}}&&\\
            &&&\ddots&\\
            &&&&-\sqrt{\chi_1}
        \end{pmatrix}
    \begin{pmatrix}
        0&\sqrt{\chi_{N-1}}& &&\\
        &-\sqrt{\chi_{N-1}}&\sqrt{\chi_{N-2}}&&\\
        &&&\ddots&\\
        &&&&-\sqrt{\chi_1}
    \end{pmatrix}^\sfT\\
    & \quad   +\mathrm{diag}( \chi_{N-1}-\sqrt{N}(\tilde z_1+\sqrt2 y), \sqrt2 \eta_2 +\chi_{N-1}+\chi_{N-2},...,\sqrt2\eta_N+\chi_1).
\end{align*}
Writing $\chi_0=0$, it follows that
\begin{align*}
    \la_{\max}(W)&\le \max\{ \chi_{N-1}-\sqrt{N}(\tilde z_1+\sqrt2 y), ~\max_{2 \le i\le N}\{\sqrt2\eta_i+\chi_{N-i+1}+\chi_{N-i}\}\}.
\end{align*}
Using the  concentration inequality for the chi-square distribution \cite{ML00}, we have
\begin{align*}
    \pz\Big(\chi_{N-i}\ge (1+\frac\eps3)\sqrt{N}\Big) \le \pz\Big(\chi_{N}^2-N\ge \frac{\sqrt2 \eps N}{3}+\frac{\eps^2 N}{9}\Big)\le e^{-\frac{\eps^2 N}{18}}.
\end{align*}
Together with the Gaussian tail, we deduce
\begin{align*}
    &\pz(\sqrt2\eta_i+\chi_{N-i+1}+\chi_{N-i}\ge (2+\eps)\sqrt{N})\\
    & \le \pz\Big(\sqrt2 \eta_i\ge\frac{\eps\sqrt{N}}{3}\Big)+\pz\Big(\chi_{N-i+1}\ge (1+\frac\eps3)\sqrt{N}\Big)+\pz\Big(\chi_{N-i}\ge (1+\frac\eps3)\sqrt{N}\Big)\\
    &\le e^{-\frac1{36}N\eps^2}+2e^{-\frac{\eps^2 N}{18}}\le 3 e^{-\frac1{36}N\eps^2}.
\end{align*}
Similarly, using \pref{eq:z1tail} we find
\begin{align*}
    &\pz(\chi_{N-1}-\sqrt{N}(\tilde z_1+\sqrt2 y)\ge  (2+\eps)\sqrt{N})\le \pz(\chi_{N-1}\ge (1+\eps)\sqrt{N})+\pz(\tilde z_1+\sqrt2y\le -1) \\
    &\le  e^{-\frac{\eps^2N}2} +e^{-\frac{-2D''(0)N}{8\sfb_*^2}}\le 2e^{-\frac{N\eps^2}{2}},
\end{align*}
where we have chosen $\de,\eps$ small enough so that
$
\frac{-2D''(0)}{4\sfb_*^2}> \eps^2.
$
Combining all these estimates with the union bound yields
\[
    \pz\Big(\la_{\max}(W)\ge (2+\eps)\sqrt{N}\Big)\le 3N e^{-\frac1{36}N\eps^2}+2e^{-\frac{N\eps^2}{2}}\le e^{-\frac1{72}N\eps^2}
\]
for $N$ large enough. By choosing $\de$ small we may assume
\[
    |c_l +\sqrt{-4D''(0)}y|\le \frac{\eps}{2}
\]
for $y\in[y_*-\de,y_*+\de]$. Hence,
\begin{align*}
    &\pz\Big(\la_{\min}(G)\le c_l-r_l-\eps \Big)\\
    &= \pz\Big( \la_{\min}(G)+\sqrt{-4D''(0)}y \le c_l-r_l +\sqrt{-4D''(0)}y  -\eps\Big)\\
    &\le \pz \Big(-\sqrt{\frac{-2D''(0)}{N}}\la_{\max}(W) \le -r_l  -\frac{\eps}2\Big)\\
    &\le \pz \Big(\la_{\max}(W)\ge 2\sqrt{N}+\frac{\eps \sqrt{N}}{-4D''(0)}\Big)\le e^{-\frac{N\eps^2}{1200 \left(D''(0)\right)^2}}.
\end{align*}
It suffices to take $\de_0=\frac{\eps^2}{1200 \left(D''(0)\right)^2}$.
\end{proof}

Turning to the proof of \eqref{zx}, we note that by the Cauchy--Schwarz inequality
\begin{align*}
    &\ez[|\det G| \indi_{\{\la_{\min}(G)\le c_l-r_l-\eps\}}|]\leq (\ez[|\det G |^2])^{1/2} \pz(\la_{\min}(G)\le c_l-r_l-\eps)^{1/2}.
\end{align*}
Recall the construction of matrix $G$ in (\ref{eq:gu}) and the conditional distribution \eqref{eq:z13con0}.
Using the Schur complement and conditioning, we have 
\begin{align*}
    \ez[|\det G |^2]&=  \ez\Big[ \ez[|\det G_{**}|^2 | z_1'-\xi^\sfT G_{**}^{-1} \xi |^2 |G_{**},\xi] \Big]\\
    &=\ez\Big[ (\sfa_N^2+\frac{\sfb^2}{N})|\det G_{**}|^2\Big],
\end{align*}
where writing $G_{**}=G_{**}|\{z_3'=y\}=\sqrt{-4 D^{\prime \prime}(0)}\left(\sqrt{\frac{N-1}{N}} \GOE_{N-1}-y I_{N-1}\right)$,
$$\sfa_N=\sfa_N(\rho,u,y)=\bar \sfa(\rho,u,y)-\xi^\sfT G_{**}^{-1}\xi.$$ 
As in \eqref{sk6} we may apply \eqref{eq:z13con0} to rewrite
\begin{align}\label{eq:abdef}
  (\sfa_N, \det G_{**})&\stackrel{d}= \Big( \bar \sfa (\rho,u,y)-\frac{\sqrt{-D''(0)}}{N}\sum_{i=1}^{N-1} \frac{Z_i^2}{(\frac{N-1}{N})^{1/2}\la_i-y}, \\
  &\qquad [-4D''(0)]^{(N-1)/2}\prod_{j=1}^{N-1} \Big[(\frac{N-1}{N})^{1/2}\la_j-y\Big] \Big),\notag
\end{align}
where $\la_1, \dots, \la_{N-1}$ are eigenvalues of $\GOE_{N-1}$ that are independent of i.i.d.~standard normal random variables $Z_1,\dots,Z_{N-1}$.
For $(\rho,u,y)\in[\rho_*-\de,\rho_*+\de]\times [u_*-\de, u_*+\de]\times [y_*-\de,y_*+\de]$, by continuity, we may choose $\de$ small enough and $N$ large enough so that
\[
|\bar \sfa(\rho,u,y)|\le C_{\mu, D,\de}, \ \ \sfb^2\le N.
\]
From the Cauchy--Schwarz inequality we know
\[
\ez_{Z}\Big(\frac{1}{N} \sum_{i=1}^{N-1} \frac{Z_i^2}{|(\frac{N-1}{N})^{1/2}\la_i-y| }\Big)^2\le \frac1N \sum_{i=1}^{N-1} \frac{\ez(Z_i^4)}{|(\frac{N-1}{N})^{1/2}\la_i-y|^2 },
\]
where $\ez_Z$ only takes expectation of $Z_i$'s.
Conditioning on the eigenvalues again, plugging in the representation of $G_{**}$, we deduce for large $N$
\begin{align*}
    &\ez[|\det G |^2] \le 2\ez\Big[ \frac{-3D''(0)}{N} \sum_{i=1}^{N-1} \frac{1}{|(\frac{N-1}{N})^{1/2}\la_i-y|^2 } |\det G_{**}|^2\Big]\\&\quad\quad\quad\quad\quad\quad+(2C_{\mu, D,\de}^2+1)\ez[|\det G_{**}|^2]\\
    &\le \frac{2[-4D''(0)]^{N}}N\sum_{i=1}^{N-1} \ez\Big[\prod_{j\neq i} |(\frac{N-1}{N})^{1/2}\la_j-y |^2 \Big] \\
    &\quad + 3C_{\mu, D,\de}^2[-4D''(0)]^{N-1}\ez\Big[\prod_{i=1}^{N-1}|(\frac{N-1}{N})^{1/2}\la_j-y |^2\Big].
\end{align*}

\begin{lemma}\label{le:det2}
    For any $\de>0$ and $\sqrt2+\de< x< \frac1\de$, we have for $N$ large enough
    \begin{align*}
        \ez\Big[\prod_{j=1}^{N-1} |(\frac{N-1}{N})^{1/2}\la_j+x |^2 \Big]&\le e^{ N\de} \ez\Big[\prod_{j=1}^{N-1}|(\frac{N-1}{N})^{1/2}\la_j+x |\Big]^2,\\
        \ez\Big[\prod_{j=1,j\neq i}^{N-1} |(\frac{N-1}{N})^{1/2}\la_j+x |^2 \Big]&\le e^{ N\de} \ez\Big[\prod_{j=1}^{N-1}|(\frac{N-1}{N})^{1/2}\la_j+x |\Big]^2, \ \ i=1,\dots, N-1.
    \end{align*}
\end{lemma}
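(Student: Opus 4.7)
The plan is to reduce both inequalities to Lemma \ref{x-13} via a rescaling, with the second inequality requiring in addition a good-event decomposition that exploits the uniform lower bound on $|(\tfrac{N-1}{N})^{1/2}\la_i+x|$ when all eigenvalues of $\GOE_{N-1}$ stay near the bulk $[-\sqrt2,\sqrt2]$.

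Set $a_j=(\tfrac{N-1}{N})^{1/2}\la_j$ and $\tilde x=x\sqrt{N/(N-1)}$. The identity
\[
\prod_{j=1}^{N-1}(a_j+x)=\Big(\tfrac{N-1}{N}\Big)^{(N-1)/2}\det(\GOE_{N-1}+\tilde x I_{N-1})
\]
shows that the same prefactor appears in $\ez[\prod_j|a_j+x|^2]$ and in $(\ez\prod_j|a_j+x|)^2$. Noting that $\tilde x\in[\sqrt2+\de/2,2/\de]$ for $N$ large, Lemma \ref{x-13} applied with $\de/2$ in place of $\de$ directly yields the first inequality with $e^{\de N}$ after absorbing the lower-order factors.

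For the second inequality, put $P_N=\prod_{j=1}^{N-1}|a_j+x|^2$ and $Q_{N,i}=P_N/|a_i+x|^2$, and introduce the good event $F_N=\{\la_{\min}(\GOE_{N-1})\ge -\sqrt2-\de/4\}$. On $F_N$ every $a_j\ge -\sqrt2-\de/4+o(1)$, so $|a_j+x|\ge \de/2$ and pointwise $Q_{N,i}\le 4\de^{-2}P_N$; combined with the first inequality this controls the $F_N$ piece. On $F_N^c$ the crude bound $Q_{N,i}\le (1+\la^*(\GOE_{N-1})+x)^{2(N-2)}$, together with Cauchy--Schwarz, the edge large deviation estimate $\pz(F_N^c)\le e^{-cN}$ (from \eqref{pp6}), and the moment bound \eqref{ac1}, yields $\ez[\indi_{F_N^c}Q_{N,i}]\le C_\de^N e^{-cN/2}$. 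Using Lemma \ref{x-9} together with the asymptotics of $\rho_{N-1}$ from Lemma \ref{x-2}(i) gives a lower bound of the form $\ez\sqrt{P_N}\ge e^{-K_\de N}$, and combining everything produces
\[
\ez Q_{N,i}\le 4\de^{-2}e^{\de N/2}(\ez\sqrt{P_N})^2+C_\de^N e^{-cN/2}\le e^{\de N}(\ez\sqrt{P_N})^2
\]
for $N$ large, as desired.

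The main obstacle is precisely the last balancing step. The edge LDP rate $c$ in $\pz(F_N^c)\le e^{-cN}$ behaves only like $\de^{3/2}$, while the lower bound on $\ez\sqrt{P_N}$ also degrades in $\de$. One may therefore need to tighten the good event (for example, taking $\la_{\min}(\GOE_{N-1})\ge -\sqrt2-\de''$ with a small auxiliary parameter $\de''$ chosen in terms of $\de$) so that the LDP rate comfortably beats both the exponential moments of $\la^*$ and the factor $K_\de$; all constants are then allowed to depend on $\de$, which is harmless since $\de$ is fixed throughout.
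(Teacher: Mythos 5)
Your treatment of the first inequality is correct: pulling out the common prefactor $(\tfrac{N-1}{N})^{(N-1)/2}$ reduces it exactly to Lemma \ref{x-13} applied to $\GOE_{N-1}$ at the shifted argument $\tilde x\in[\sqrt2+\de/2,2/\de]$ for $N$ large.

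Your treatment of the second inequality, however, has a genuine gap that cannot be closed along the route you propose. The good event $F_N=\{\la_{\min}(\GOE_{N-1})\ge-\sqrt2-\de/4\}$ is governed by a speed-$N$ large deviation: $\pz(F_N^c)\le e^{-cN}$ with rate $c\asymp\de^{3/2}$ that is \emph{small}. On $F_N^c$ your Cauchy--Schwarz step produces $\ez[\indi_{F_N^c}Q_{N,i}]\le C_\de^N e^{-cN/2}$, and the benchmark on the right-hand side is $e^{\de N}(\ez\prod_j|a_j+x|)^2\asymp e^{\de N+2N\Psi_*(x)}$. To absorb the bad event you would need $\log C_\de - c/2\le\de+2\Psi_*(x)$, but $\log C_\de=2\log(1+C+x)$ from the moment bound \eqref{ac1} is strictly larger than $2\Psi_*(x)$ (the constant $C$ in \eqref{ac1} is bounded away from $\sqrt2$ once $k$ scales with $N$), while $c/2$ is of order $\de^{3/2}$, so the inequality fails for small $\de$. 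Note also that your proposed fix runs in the wrong direction: shrinking the auxiliary parameter $\de''$ in $\{\la_{\min}\ge-\sqrt2-\de''\}$ \emph{decreases} the LDP rate $c(\de'')$ rather than increasing it, so tightening the good event makes the balance worse. And one cannot replace the crude bound $\ez[Q_{N,i}^2]^{1/2}\le C_\de^N$ by $e^{N(2\Psi_*(x)+o(1))}$ without first establishing exactly the type of concentration the lemma is meant to prove, which would be circular.

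The paper takes a genuinely different route: it never isolates the factor $|a_i+x|$, and therefore never needs a pointwise lower bound on it. Instead it observes that the empirical measure $L((\tfrac{N-1}{N})^{1/2}\la_{j\ne i})$ of the \emph{remaining} $N-2$ eigenvalues is exponentially equivalent to $L(\la_{j=1}^{N-1})$ (removing a single eigenvalue perturbs the empirical measure by $O(1/N)$), so that the Ben Arous--Guionnet LDP with speed $N^2$ transfers. Then, with $\kappa$ large, one writes $\prod_{j\ne i}|a_j+x|^2=\exp\bigl(2\sum_{j\ne i}\log|a_j+x|\bigr)$, bounds $\log$ by the truncation $\log_\kappa$ on the event $\{\la_{\max}\le\sqrt2+\de/2\}$, and controls the linear statistic $\int\log_\kappa|x-t|\,L(\dd t)$ by $\Psi_*(x)+O(\de)$ once the empirical measure is within $\de/(8\kappa)$ of $\si_{\rm sc}$. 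The complementary event now has probability $e^{-cN^2}$, which dominates every exponential-in-$N$ factor, so the balance closes trivially. This is the decisive structural difference: the speed-$N^2$ LDP for the empirical spectrum, not the speed-$N$ LDP for the extreme eigenvalue, is what powers the argument of Lemma \ref{x-13} and, via exponential equivalence, this lemma as well.
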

\begin{proof}
    We only prove the second inequality as the proof for the first one is a simplification of the second. We
    denote by
    $$L(\la_{j\neq i}^{N-1})=\frac1{N-2}\sum_{j=1,j\neq i}^{N-1} \de_{\la_j}$$ the empirical measure of the vector $(\la_1,\dots,\la_{i-1},\la_{i+1},\dots,\la_{N-1})$. 
  In short, the result follows from the same argument as for Lemma \ref{x-13} since the empirical measures $L((\frac{N-1}N)^{1/2}\la_{j\neq i}^{N-1})$ and $L(\la_{i=1}^{N-1})$ are exponentially equivalent. Let us indicate the difference when arguing as in the proof of \cite[Lemma 6.4]{Be22}. As shown in \cite[(A.17)]{ABC13}, $L(\la_{i=1}^{N-1})$ and $L((\frac{N-1}N)^{1/2}\la_{j=1}^{N-1})$ are exponentially equivalent. Similarly, using \cite[(5.3)]{AZ20}, we see that $L((\frac{N-1}N)^{1/2}\la_{j=1}^{N-1})$  and  $L((\frac{N-1}N)^{1/2}\la_{j\neq i}^{N-1})$ are exponentially equivalent. It follows that  $L((\frac{N-1}N)^{1/2}\la_{j\neq i}^{N-1})$ satisfies the same LDP as $L(\la_{i=1}^{N-1})$ with speed $N^2$.

  Let $\log_\kappa x=\log x \indi_{\{\frac1\kappa\le x\le \kappa\}}+ \log \kappa \indi_{\{x>\kappa\}}-\log \kappa \indi_{\{x<1/\kappa\}}$. By taking $\kappa$ and $N$ large enough, we have for $\sqrt2+\de< x< \frac1\de$,
  \begin{align*}
    &\ez\Big[\prod_{j=1,j\neq i}^{N-1} |(\frac{N-1}{N})^{1/2}\la_j+x |^2 \Big]\le 4e^{N\de/4}\exp\Big(2N\int_{-\sqrt2}^{\sqrt2} \frac1{\pi} \log_\kappa |x-\la|\sqrt{2-\la^2} \dd \la\Big)\\
    &\le 8e^{N\de/2}\Big[\ez\Big(\exp\Big( N\int \log_\kappa|x-t | L\Big((\frac{N-1}{N})^{1/2}\la_{j=1}^{N-1}\Big)(\dd t)\Big)\\
    &\quad \times \indi_{\Big\{d\Big(L\Big((\frac{N-1}{N})^{1/2}\la_{j=1}^{N-1}\Big),\si_{\rm sc}\Big) \le \frac{\de}{8\kappa}, \la_{\max}\le \sqrt2 +\de/2\Big\}}\Big)\Big]^2\\
    &\le 8 e^{N\de/2} \ez\Big[\prod_{i=1}^{N-1}|(\frac{N-1}{N})^{1/2}\la_j+x |\Big]^2.
  \end{align*}
  The desired result follows by enlarging the exponent to control the constant.
\end{proof}

Using \pref{le:det2} for small $\de\le\frac{\de_0}2$ where $\de_0$ is from Lemma \ref{51}, when $N$ is large enough, we have uniformly for $(\rho,u,y)\in[\rho_*-\de,\rho_*+\de]\times [u_*-\de, u_*+\de]\times [y_*-\de,y_*+\de],$
\begin{align}
\label{528}
    \ez[|\det G |^2]&\le (-8D''(0) +C_{\mu, D,\de}^2)[-4D''(0)]^{N-1} e^{\frac{\de_0 N}{2} } \ez\Big[\prod_{i=1}^{N-1}|(\frac{N-1}{N})^{1/2}\la_j-y |\Big]^2\\
    &\le (-8D''(0) +C_{\mu, D,\de}^2)e^{\frac{\de_0 N}{2} } \ez[|\det G_{**}|]^2\nonumber\\
    &\le \frac{(-8D''(0) +C_{\mu, D,\de}^2)\pi N e^{\frac{\de_0 N}{2} }}{ 2\sfb_0^2} \ez[|\det G |]^2 \nonumber,
\end{align}
where in the last step we have used the fact that
$
    \sqrt{\frac2\pi}|b|\le \ez|a+bz|
$
for $z\sim N(0,1), a,b\in\rz$ with conditioning and $\sfb_0^2=\inf_{\rho_*-\de\le \rho\le \rho_*+\de} \sfb(\rho)^2>0$.

We are now ready to handle $\mathbb{E}\left[\#\left\{x \in \Theta_{\delta}^2: \nabla H_{N}(x)=0\right\}\right]$. Again arguing as in (\ref{aa1}), we have
\begin{align}
\label{kk}
&\mathbb{E}\left[\#\left\{x \in \Theta_{\delta}^2: \nabla H_{N}(x)=0\right\}\right]\\
&=S_{N-1} N^{(N-1) / 2} \int_{0}^{\infty} \int_{-\infty}^{\infty} \mathbb{E}[|\operatorname{det} G|\indi_{\left\{\lambda_{\min }\left(G\right)+r_l-c_l<-\sqrt{-4D^{\prime\prime}}\delta\right\}}]\nonumber \\
&\quad\quad\quad\quad\quad\quad\quad\quad\quad \times p_{\nabla H_{N}(x)}(0) f_{Y}(u) \rho^{N-1} \dd  u \dd  \rho\nonumber \\
&=S_{N-1} N^{(N-1) / 2} \int_{0}^{\infty} \int_{-\infty}^{\infty} \mathbb{E}[|\operatorname{det} G|\indi_{\left\{\lambda_{\min }\left(G\right)+r_l-c_l<-\sqrt{-4D^{\prime\prime}}\delta,|z_{3}^{\prime}-y_*|\leq\delta\right\}}]\nonumber \\
&\quad\quad\quad\quad\quad\quad\quad\quad\quad\times p_{\nabla H_{N}(x)}(0) f_{Y}(u) \rho^{N-1} \dd  u \dd  \rho\nonumber \\
&\quad+S_{N-1} N^{(N-1) / 2} \int_{0}^{\infty} \int_{-\infty}^{\infty} \mathbb{E}[|\operatorname{det} G|\indi_{\left\{\lambda_{\min }\left(G\right)+r_l-c_l<-\sqrt{-4D^{\prime\prime}}\delta,|z_{3}^{\prime}-y_*|>\delta\right\}}]\nonumber \\
&\quad\quad\quad\quad\quad\quad\quad\quad\quad\times p_{\nabla H_{N}(x)}(0) f_{Y}(u) \rho^{N-1} \dd  u \dd  \rho
\nonumber\\
&=:\sfL_{N}^{1} + \sfL_{N}^{2} \nonumber .
\end{align}
By the same argument as for (\ref{sk2}), we know
\begin{align}
\label{kk1}
\lim _{N \rightarrow \infty} \sfL_{N}^{2} =0.
\end{align}
For $\sfL_{N}^{1} $, conditioning on $z_{3}^{\prime}=y$, we make the following decomposition,
\begin{align*}
&\sfL_{N}^{1} =S_{N-1} N^{(N-1) / 2} \int_{0}^{\infty} \int_{-\infty}^{\infty}\int_{y_{*}-\delta}^{y_{*}+\delta} \mathbb{E}[|\operatorname{det} G|\indi_{\left\{\lambda_{\min }\left(G\right)+r_l-c_l<-\sqrt{-4D^{\prime\prime}}\delta\right\}}]\nonumber \\
&\quad\quad\quad\quad\quad\quad\quad\quad\quad\quad\quad\quad\quad\quad\quad\quad\times g_{z_{3}^{\prime}}(y) p_{\nabla H_{N}(x)}(0) f_{Y}(u) \rho^{N-1}\dd  y \dd  u \dd  \rho\nonumber\\
&=S_{N-1} N^{(N-1) / 2} \int_{\rho_{*}-\delta}^{\rho_{*}+\delta} \int_{u_{*}-\delta}^{u_{*}+\delta}\int_{y_{*}-\delta}^{y_{*}+\delta} \mathbb{E}[|\operatorname{det} G|\indi_{\left\{\lambda_{\min }\left(G\right)+r_l-c_l<-\sqrt{-4D^{\prime\prime}}\delta\right\}}]\nonumber \\
&\quad\quad\quad\quad\quad\quad\quad\quad\quad\quad\quad\quad\quad\quad\quad\quad\times g_{z_{3}^{\prime}}(y) p_{\nabla H_{N}(x)}(0) f_{Y}(u) \rho^{N-1}\dd  y \dd  u \dd  \rho\nonumber\\
&\quad+S_{N-1} N^{(N-1) / 2} \int_{[\rho_{*}-\delta,\rho_{*}+\delta]^c}\int_{u_{*}-\delta}^{u_{*}+\delta}\int_{y_{*}-\delta}^{y_{*}+\delta} \mathbb{E}[|\operatorname{det} G|\indi_{\left\{\lambda_{\min }\left(G\right)+r_l-c_l<-\sqrt{-4D^{\prime\prime}}\delta\right\}}]\nonumber \\
&\quad\quad\quad\quad\quad\quad\quad\quad\quad\quad\quad\quad\quad\quad\quad\quad\times g_{z_{3}^{\prime}}(y) p_{\nabla H_{N}(x)}(0) f_{Y}(u) \rho^{N-1}\dd  y \dd  u \dd  \rho\nonumber\\
&\quad+S_{N-1} N^{(N-1) / 2} \int_{\rho_{*}-\delta}^{\rho_{*}+\delta} \int_{[u_{*}-\delta,u_{*}+\delta]^c}\int_{y_{*}-\delta}^{y_{*}+\delta} \mathbb{E}[|\operatorname{det} G|\indi_{\left\{\lambda_{\min }\left(G\right)+r_l-c_l<-\sqrt{-4D^{\prime\prime}}\delta\right\}}]\nonumber \\
&\quad\quad\quad\quad\quad\quad\quad\quad\quad\quad\quad\quad\quad\quad\quad\quad\times g_{z_{3}^{\prime}}(y) p_{\nabla H_{N}(x)}(0) f_{Y}(u) \rho^{N-1}\dd  y \dd  u \dd  \rho\nonumber\\
&\quad+S_{N-1} N^{(N-1) / 2} \int_{[\rho_{*}-\delta,\rho_{*}+\delta]^c} \int_{[u_{*}-\delta,u_{*}+\delta]^c} \int_{y_{*}-\delta}^{y_{*}+\delta} \mathbb{E}[|\operatorname{det} G|\indi_{\left\{\lambda_{\min }\left(G\right)+r_l-c_l<-\sqrt{-4D^{\prime\prime}}\delta\right\}}]\nonumber \\
&\quad\quad\quad\quad\quad\quad\quad\quad\quad\quad\quad\quad\quad\quad\quad\quad\times g_{z_{3}^{\prime}}(y) p_{\nabla H_{N}(x)}(0) f_{Y}(u) \rho^{N-1}\dd  y \dd  u \dd  \rho\nonumber\\
&=:\sfL_{N}^{11} +\sfL_{N}^{12} +\sfL_{N}^{13} +\sfL_{N}^{14},
\end{align*}
where the set  $[\rho_{*}-\delta,\rho_{*}+\delta]^c$ represents $(0,\rho_{*}-\delta)\cup(\rho_{*}+\delta,+\infty)$. 
Similarly to (\ref{ab4}) and (\ref{ab5}), we see all of $\sfL_{N}^{12} $, $\sfL_{N}^{13} $ and $\sfL_{N}^{14} $ converge to $0$ as ${N \rightarrow \infty}$.

It remains to show the convergence of $\sfL_{N}^{11} $.
Using Lemma \ref{51} and (\ref{528}), by the Cauchy--Schwarz inequality, we deduce that
\begin{align*}
\sfL_{N}^{11}&\leq S_{N-1} N^{(N-1) / 2} \int_{\rho_{*}-\delta}^{\rho_{*}+\delta} \int_{u_{*}-\delta}^{u_{*}+\delta}\int_{y_{*}-\delta}^{y_{*}+\delta} (\ez[|\det G |^2])^{1/2} \nonumber \\
&\quad\times\pz(\lambda_{\min }\left(G\right)+r_l-c_l<-\sqrt{-4D^{\prime\prime}}\delta)^{1/2} g_{z_{3}^{\prime}}(y) p_{\nabla H_{N}(x)}(0) f_{Y}(u) \rho^{N-1}\dd  y \dd  u \dd \rho \nonumber \\
&\leq e^{\frac{-\de_0 N}{8} } S_{N-1} N^{(N-1) / 2} \int_{\rho_{*}-\delta}^{\rho_{*}+\delta} \int_{u_{*}-\delta}^{u_{*}+\delta}\int_{y_{*}-\delta}^{y_{*}+\delta} \mathbb{E}[|\operatorname{det} G|]\nonumber \\
&\quad\quad\quad\quad\quad\quad\quad\quad\quad \times g_{z_{3}^{\prime}}(y) p_{\nabla H_{N}(x)}(0) f_{Y}(u) \rho^{N-1}\dd  y \dd  u \dd \rho \notag\\
&\le e^{\frac{-\de_0 N}{8} }  \mathbb{E}\left[\operatorname{Crt}_{N}\left(\mathbb{R}, \mathbb{R}^{N}\right)\right] \to 0
\end{align*}
as $N\to\8$, where the last step follows from Theorem \ref{h-1} (i). We have proved
$
\lim _{N \rightarrow \infty} \sfL_{N}^{1} =0.
$
Combining this with (\ref{kk1}) implies
\begin{align*}
\lim _{N \rightarrow \infty}\mathbb{E}\left[\#\left\{x \in \Theta_{\delta}^2: \nabla H_{N}(x)=0\right\}\right]=0,
\end{align*}
and our proof is complete.
\end{proof}

\section{Elastic manifold}\label{se:em}

 From the physics literature \cite{FLD20}, we know that for any $x\in\Omega$ fixed, the gradient of $\mathcal{H}(\mathbf{u}(x))$
$$
\nabla \mathcal{H}(\mathbf{u}(x))=\sum_{y \in \Omega}\left(\mu_0 I_{L^d}-t_0 \Delta\right)_{x y} \cdot \mathbf{u}(y)+\frac{\partial}{\partial \mathbf{u}(x)} X_N(\mathbf{u}(x), x)
$$ 
is an $N$-dimensional vector, and thus $\nabla \mathcal{H}(\mathbf{u})$ can be regarded as an $NL^d$-dimensional vector. Similarly, the Hessian $\nabla^2 \mathcal{H}(\mathbf{u})$ is an $NL^d\times NL^d$ matrix and its component 
$$
\begin{aligned}
\nabla^2 \mathcal{H}(\mathbf{u})_{ix,jy} 
 =\delta_{i j}\left(\mu_0 I_{L^d}-t_0 \Delta\right)_{x y}+\delta_{x y} \frac{\partial^2}{\partial u_i \partial u_j} X_N(\mathbf{u}(x), x) .
\end{aligned}
$$
Since the rest of this section heavily relies on the remarkable work \cite{BABM21}, we follow the notations from there. Recall the semicircle distribution with variance $t$
$$
\sigma_{0, 2\sqrt{t}}(\mathrm{~d} x)=\frac{\sqrt{4 t-x^2}}{2 \pi t} \indi_{x \in[-2 \sqrt{t}, 2 \sqrt{t}]} \mathrm{d} x.
$$  
For any probability measure $\mu$ compactly supported in $(0,\infty)$ , writing $\ell\left(\mu\right)$ for its left edge, we set
\begin{align*}
\mu_t  =\sigma_{0, 2\sqrt{t}} \boxplus \mu, \quad
\ell_t  =\ell\left(\mu_t\right), 
\end{align*}
for the free convolution of $\mu$ with the semicircle distribution $\sigma_{0, 2\sqrt{t}}$ and its left edge, respectively. 

Unlike the locally isotropic Gaussian random fields, for the elastic manifold model, it seems difficult to show that the expected number of critical points converges to one as $N$ tends to infinity for large potential. This prevents us from obtaining a topology trivialization result for elastic manifold. Fortunately, we know the global minimum exists and is unique almost surely from \pref{se:euminimum}. 


Let us define
\begin{align*}
    F(u, t)&=-\int_{\mathbb{R}} \log (\lambda) \mu(\mathrm{d} \lambda)+\int_{\mathbb{R}} \log |\lambda+u| \mu_t(\lambda) \mathrm{d} \lambda-\frac{u^2}{2 t},\\
    u_t&=-t \int \frac{\mu(\mathrm{d} \lambda)}{\lambda},\quad t_c=\inf \left\{t>0: u_t=\ell_t\right\}.
\end{align*}
From {\cite[Section 5.2]{BABM21}}, we know $$t_c=\left(\int \frac{\mu(\mathrm{d} \lambda)}{\lambda^2}\right)^{-1}.$$
The function $F(u,t)$ has a unique maximizer $-u_t$ for $t<t_c$. From the proof of \cite[Theorem 2.8]{BABM21}, we know $u_t< \ell_t$ for $t<t_c$, which means that $u_t$ is not in the support of $\mu_t$. Thus, the function $F(u, t)$ is strictly concave in a neighborhood of $-u_t$ in this case. We restate \cite[Lemma 5.5]{BABM21} as the following result for later references. 

\begin{lemma}
\label{unimax}
 When $t<t_c$ is fixed, the function $F(u,t)$ attains its maximum at the unique maximizer $-u_t$ and the maximum value $F(-u_t,t)=0$.  
\end{lemma}

Set $J=2 \sqrt{B^{\prime \prime}(0)}$. For any fixed $u \in \mathbb{R}^{L^d}$, we define
\begin{equation*}
 \label{eq:au}
 a(u)=\left(-t_0 \Delta+\operatorname{diag}(u)+\mu_0 I_{L^d}\right) \in \mathbb{R}^{L^d \times L^d}.
\end{equation*}
With $a(u)$ defined above, we set
$$
A_N(u)=a(u) \otimes I_N,
$$
where $\otimes$ denotes the Kronecker product. For every $N$ fixed, let $\left(M_i\right)_{i=1}^{L^d}$ be a sequence of independent $N \times N$ matrices and each has the same distribution as $J$ times a GOE matrix. Write
\begin{align}
\label{WH}
W_N =\sum_{i=1}^{L^d} E_{i i} \otimes M_i, \quad\quad
H_N(u) =A_N(u)+W_N,
\end{align}
where $(E_{i j})_{i,j=1}^{L^d}$ denotes the matrix units. 

From {\cite[Section 4]{BABM21}}, we know that
for any complex number $z \in \mathbb{H}$, the upper half-plane, and $u \in \mathbb{R}^{L^d}$, there exists a unique solution $M(u,z)\in \mathbb{C}^{L^d\times L^d}$ to the equation
\begin{align}\label{M}
    -M^{-1}(u, z)=z \operatorname{Id}-a(u)+J^2\operatorname{diag}[M(u, z)] \quad\quad \text{subject to}\quad {\Im}M(u, z)>\mathbf{0},
\end{align}
where $\mathbf{0}$ is the matrix of all zeros.
Clearly, $M(u,z)$ is symmetric. Moreover, from the above equation, we see that $M(u, z)\neq M(u', z)$ for $u\neq u'$ and any $z\in \mathbb{H}$ fixed.
 Let $\mu_{\infty}(u)$ be the probability measure with Stieltjes transform at $z$ given by
 $$
\int \frac{\mu_{\infty}(u,\mathrm{d} s)}{s-z}=\frac{1}{L^d} \operatorname{Tr}\left(M(u, z)\right).
$$

\begin{proof}[Proof of Theorem \ref{Elas}]
For any $\eps>0$, we define the random set
\begin{align*}
 \Xi_{\varepsilon}&=\left\{\mathbf{u}:\hat{\mu}_{\nabla^2 \mathcal{H}(\mathbf{u})} \notin B\left(\tau_{-u^*}(\sigma_{0, 4\sqrt{B''}} \boxplus \hat{\mu}_{-t_0 \Delta+\mu_0 I_{L^d}}), \varepsilon\right)\right\}.
\end{align*}
As in the previous proofs, by Markov's inequality, it suffices to prove that if $\mu_0 > \mu_c\left(t_0, 4B''(0), L, d\right)$,
\begin{align}
\label{Nu}
    \lim _{N \rightarrow \infty}\mathbb{E}\left[\#\left\{\mathbf{u} \in \Xi_{\varepsilon}:\nabla \mathcal{H}(\mathbf{u})=0\right\}\right]= 0,
\end{align}
which is further implied by
\begin{align}
\label{loge}
  \limsup _{N \rightarrow \infty} \frac{1}{N L^d} \log \mathbb{E}\left[\#\left\{\mathbf{u} \in \Xi_{\varepsilon}:\nabla \mathcal{H}(\mathbf{u})=0\right\}\right]<0. 
\end{align}
The Kac--Rice formula in \cite{FLD20} gives
\begin{align}
\label{logkac}
&\frac{1}{N L^d} \log \mathbb{E}\left[\#\left\{\mathbf{u} \in \Xi_{\varepsilon}:\nabla \mathcal{H}(\mathbf{u})=0\right\}\right]=  -\frac{1}{L^d} \log \left(\operatorname{det}\left(\mu_0I_{L^d}-t_0 \Delta\right)\right)+\frac{1}{N L^d} \\
&\quad\times \log \int_{\mathbb{R}^{L^d}} \frac{e^{-N \frac{\|u\|^2}{2 J^2}}}{\left(\sqrt{2 \pi J^2 / N}\right)^{L^d}} \mathbb{E}\left[\left|\operatorname{det}\left(H_N(u)\right)\right|\indi_{d(\hat{\mu}_{H_N(u)},\tau_{-u^*}(\sigma_{0, 4\sqrt{B''}} \boxplus \hat{\mu}_{-t_0 \Delta+\mu_0 I_{L^d}}))> \varepsilon}\right] \mathrm{d} u.\nonumber
\end{align}
For any $\delta>0$, denote by $O_{\delta}(\hat{u}^*)$ the open ball with center $\hat{u}^*:=(u^*,\dots, u^*)\in\mathbb{R}^{L^d}$ and radius $\delta$, and write $O_{\delta}(\hat{u}^*)^c$ for its complement. We divide the integral in Equation (\ref{logkac}) into the following two parts
\begin{align}
\label{s1s2}
&\int_{\mathbb{R}^{L^d}} \frac{e^{-N \frac{\|u\|^2}{2 J^2}}}{\left(\sqrt{2 \pi J^2 / N}\right)^{L^d}} \mathbb{E}\left[\left|\operatorname{det}\left(H_N(u)\right)\right|\indi_{d(\hat{\mu}_{H_N(u)},\tau_{-u^*}(\sigma_{0, 4\sqrt{B''}} \boxplus \hat{\mu}_{-t_0 \Delta+\mu_0 I_{L^d}}))> \varepsilon}\right] \mathrm{d} u\\ &
=\int_{O_{\frac{\eps}{2}}(\hat{u}^*)} \frac{e^{-N \frac{\|u\|^2}{2 J^2}}}{\left(\sqrt{2 \pi J^2 / N}\right)^{L^d}} \mathbb{E}\left[\left|\operatorname{det}\left(H_N(u)\right)\right|\indi_{d(\hat{\mu}_{H_N(u)},\tau_{-u^*}(\sigma_{0, 4\sqrt{B''}} \boxplus \hat{\mu}_{-t_0 \Delta+\mu_0 I_{L^d}}))> \varepsilon}\right] \mathrm{d} u
\nonumber\\ &\quad + \int_{O_{\frac{\eps}{2}}(\hat{u}^*)^c} \frac{e^{-N \frac{\|u\|^2}{2 J^2}}}{\left(\sqrt{2 \pi J^2 / N}\right)^{L^d}} \mathbb{E}\left[\left|\operatorname{det}\left(H_N(u)\right)\right|\indi_{d(\hat{\mu}_{H_N(u)},\tau_{-u^*}(\sigma_{0, 4\sqrt{B''}} \boxplus \hat{\mu}_{-t_0 \Delta+\mu_0 I_{L^d}}))> \varepsilon}\right] \mathrm{d} u
\nonumber \\ &
=:\mathbf{S}_N^1+\mathbf{S}_N^2.\nonumber
\end{align}
To show \eqref{loge}, we have to prove
\begin{align}
\label{s1}
-\frac{1}{L^d} \log \left(\operatorname{det}\left(\mu_0I_{L^d}-t_0 \Delta\right)\right)+ \limsup _{N \rightarrow \infty} \frac{1}{N L^d} \log \mathbf{S}_N^1 <0  , 
\end{align}
and
\begin{align}
\label{s2}
-\frac{1}{L^d} \log \left(\operatorname{det}\left(\mu_0I_{L^d}-t_0 \Delta\right)\right)+ \limsup _{N \rightarrow \infty} \frac{1}{N L^d} \log \mathbf{S}_N^2 <0 .  
\end{align}

For (\ref{s2}), dropping the indicator function in $\mathbf{S}_N^2$ implies
\begin{align}
\label{dropindi}
&\limsup _{N \rightarrow \infty} \frac{1}{N L^d} \log \mathbf{S}_N^2 \\ &\leq 
\limsup _{N \rightarrow \infty} \frac{1}{N L^d} \log\int_{O_{\frac{\eps}{2}}(\hat{u}^*)^c} \frac{e^{-N \frac{\|u\|^2}{2 J^2}}}{\left(\sqrt{2 \pi J^2 / N}\right)^{L^d}} \mathbb{E}\left[\left|\operatorname{det}\left(H_N(u)\right)\right|\right] \mathrm{d} u.\nonumber  
\end{align}
Together with the arguments as in {\cite[Propsition 4.8]{BABM21}}, we apply {\cite[Theorem 4.1]{BABM22}} to get
\begin{align}
\label{supu}
&\lim_{N \rightarrow \infty} \frac{1}{N L^d} \log\int_{O_{\frac{\eps}{2}}(\hat{u}^*)^c} \frac{e^{-N \frac{\|u\|^2}{2 J^2}}}{\left(\sqrt{2 \pi J^2 / N}\right)^{L^d}} \mathbb{E}\left[\left|\operatorname{det}\left(H_N(u)\right)\right|\right] \mathrm{d} u \\&=\sup _{u \in O_{\frac{\eps}{2}}(\hat{u}^*)^c}\left\{\int \log |\lambda| \mu_{\infty}(u, \lambda) \mathrm{d} \lambda-\frac{\|u\|^2}{2 J^2 L^d}\right\}. \nonumber  
\end{align}
For any $u\in \mathbb{R}^{L^d}$, we define $$\mathbb{S}[u]=\int \log |\lambda| \mu_{\infty}(u, \lambda) \mathrm{d} \lambda-\frac{\|u\|^2}{2 J^2 L^d}.$$ 
From {\cite[Propsition 4.11]{BABM21}} and the proof of {\cite[Theorem 2.2]{BABM21}}, we know
\begin{align}
\label{sups}
\sup _{u \in  \mathbb{R}^{L^d}}\mathbb{S}[u]= \sup _{u \in  \mathbb{R}}\mathbb{S}[(u,\dots,u)],  
\end{align}
and 
\begin{align}
\label{muinf}
    \mu_{\infty}((u, \ldots, u), \lambda)=\mu_{\infty}((0, \ldots, 0), \lambda-u),
\end{align}
with the probability measure $\mu_{\infty}((0, \ldots, 0))=\sigma_{0, 4\sqrt{B''}}\boxplus \hat{\mu}_{-t_0 \Delta+\mu_0 I_{L^d}}$.
Note that the condition $\mu_0 > \mu_c\left(t_0, 4B''(0), L, d\right)$ in Theorem \ref{Elas} is equivalent to the condition $t<t_c$ in Lemma \ref{unimax} with $t=4B''(0)$ and $\mu=\hat{\mu}_{-t_0 \Delta+\mu_0 I_{L^d}}$. In this context we have $-u_t=u^*$. The equality (\ref{sups}) together with Lemma \ref{unimax} implies that $\hat{u}^*\in\mathbb{R}^{L^d}$ is a maximizer of $\mathbb{S}[u]$ with $$-\frac{1}{L^d} \log\left(\operatorname{det}\left(\mu_0I_{L^d}-t_0 \Delta\right)\right)+\mathbb{S}[\hat{u}^*]=0.$$
 Recall from \cite[Proposition 4.9]{BABM21} that $\mathbb{S}[u]$ is concave on $\mathbb{R}^{L^d}$. 
The discussion above Lemma \ref{unimax} tells us that the function $\mathbb{S}[u]$
is strictly concave in the diagonal direction in a neighborhood of $\hat{u}^*$, which implies that $\hat{u}^*$ is the unique maximizer of $\mathbb{S}[u]$ in the diagonal direction. 

We will show that $\hat{u}^*$ is the unique maximizer of $\mathbb{S}[u]$ in $\mathbb{R}^{L^d}$. Recall the unique solution $M(u,z)$ in (\ref{M}). 
Note that $\mu_{\infty}(\hat{u}^*)=\tau_{-u^*}(\sigma_{0, 4\sqrt{B''}} \boxplus \hat{\mu}_{-t_0 \Delta+\mu_0 I_{L^d}})$ and $0$ is not in the support of this measure from the discussion above Lemma \ref{unimax}, which implies that 
\begin{align}\label{trice}
\operatorname{Tr}\left(\Im M(\hat{u}^*, \mathrm{i}0^+)\right)=0,    
\end{align}
where the symbol $\mathrm{i}0^+$ is understood as taking the real part of $z$ to be zero and sending the imaginary part of $z$ to zero. Since $\Im M(u,z)$ is positive definite for any $z\in\mathbb{H}$ by the definition (\ref{M}), the matrix $\Im M(\hat{u}^*,\mathrm{i}0^+)$ is positive semi-definite \cite[(3.11b)]{location}. This together with (\ref{trice}) further gives that for all $1\leq j\leq L^d$,
\begin{align*}
 \Im M_{jj}(\hat{u}^*, \mathrm{i}0^+)=0,\quad\text{and} \quad \Im M(\hat{u}^*,\mathrm{i}0^+)=\mathbf{0}. \end{align*} 
Since $\hat{u}^*$ is a maximizer of $\mathbb{S}[u]$, by \cite[(4.17)]{BABM21} (taking $\eta$ to zero\footnote{Note that the negative sign appeared in this equation should be in front of $E_{kk}$ in the next displayed equation, which makes \cite[(4.18)]{BABM21} valid.}), we obtain for all $1\leq j\leq L^d$,
\begin{align}\label{reM}
\Re M_{jj}(\hat{u}^*,\mathrm{i}0^+)= M_{jj}(\hat{u}^*,\mathrm{i}0^+)= \frac{u^*}{J^2}.    
\end{align}
Substituting the above identity to (\ref{M}) gives that
\begin{align}\label{u8}
\left(M(\hat{u}^*,\mathrm{i}0^+)\right)^{-1}=-t_0\Delta + \mu_0 I_{L^d}.    
\end{align}
To prove the uniqueness of the maximizer of $\mathbb{S}[u]$, it suffices to show that $\hat{u}^*$ is the unique maximizer in a small neighborhood of $\hat{u}^*$, since $\mathbb{S}[u]$ is concave on $\mathbb{R}^{L^d}$. From the proof of \cite[Lemma 4.7]{BABM21}, we know that the left edge of $\mu_{\infty}(u)$ is a concave function in $u$; that is, for any $u,v\in \mathbb{R}^{L^d}$ and $t\in [0,1]$,
$$
\ell\left(\mu_{\infty}(t u+(1-t) v)\right) \geq t \ell\left(\mu_{\infty}(u)\right)+(1-t) \ell\left(\mu_{\infty}(v)\right).
$$
As a result, the left edge of $\mu_{\infty}(u)$ is
continuous with respect to $u$.
Thus, there exists a small enough $\delta$ such that for any $u\in O_{\delta}(\hat{u}^*)$, $0$ is also not in the support of the measure $\mu_{\infty}(u)$. Suppose that $O_{\delta}(\hat{u}^*)\ni u'=(u'_1,\dots,u'_{L^d})\neq \hat{u}^*$ is another maximizer of $\mathbb{S}[u]$. Then by an argument similar to (\ref{reM}) and (\ref{u8}), we have for all $1\leq j\leq L^d$,
\begin{align*}
\Re M_{jj}(u',\mathrm{i}0^+)= M_{jj}(u',\mathrm{i}0^+)= \frac{u'_{j}}{J^2}.    
\end{align*}
and
\begin{align*}
\left(M(u',\mathrm{i}0^+)\right)^{-1}=-t_0\Delta + \mu_0 I_{L^d},    
\end{align*}
which leads to a contradiction, since the  matrix $-t_0\Delta + \mu_0 I_{L^d}$ cannot have different inverse matrices.
From here, $\hat{u}^*$ is the unique maximizer of $\mathbb{S}[u]$ in $\mathbb{R}^{L^d}$. Then we deduce that for any $\varepsilon>0$
\begin{align}\label{strictsmall0}
    -\frac{1}{L^d} \log\left(\operatorname{det}\left(\mu_0I_{L^d}-t_0 \Delta\right)\right)+\sup _{u \in O_{\frac{\eps}{2}}(\hat{u}^*)^c}\left\{\int \log |\lambda| \mu_{\infty}(u, \lambda) \mathrm{d} \lambda-\frac{\|u\|^2}{2 J^2 L^d}\right\}<0.
\end{align}
Combining (\ref{dropindi}), (\ref{supu}) and (\ref{strictsmall0}) gives the inequality (\ref{s2}). 

Now we turn to the proof of (\ref{s1}). As $u\in O_{\frac{\eps}{2}}(\hat{u}^*)$, we find
\begin{align}
\label{lambdak}
|\lambda_k(H_N(\hat{u}^*))-\lambda_k(H_N(u))|\leq \|\operatorname{diag}(u-\hat{u}^*)\otimes I_N\|\leq \frac{\eps}{2} \quad \text{for all} \quad 1\leq k\leq NL^d,
\end{align}
where we denote by $\|M\|$ the operator norm of the matrix $M$. It follows that
\begin{equation}
\label{disuu}
d(\hat{\mu}_{H_N(u)},\hat{\mu}_{H_N(\hat{u}^*)})<\frac{\varepsilon}{2}. 
\end{equation}
On the other hand, by the triangle inequality we have
\begin{align}
\varepsilon&<d(\hat{\mu}_{H_N(u)},\tau_{-u^*}(\sigma_{0, 4\sqrt{B''}} \boxplus \hat{\mu}_{-t_0 \Delta+\mu_0 I_{L^d}}))\\&\leq d(\hat{\mu}_{H_N(u)},\hat{\mu}_{H_N(\hat{u}^*)})+ d(\hat{\mu}_{H_N(\hat{u}^*)},\tau_{-u^*}(\sigma_{0, 4\sqrt{B''}} \boxplus \hat{\mu}_{-t_0 \Delta+\mu_0 I_{L^d}})).\nonumber
\end{align}
Along with (\ref{disuu}), we obtain
\begin{align}
d(\hat{\mu}_{H_N(\hat{u}^*)},\tau_{-u^*}(\sigma_{0, 4\sqrt{B''}} \boxplus \hat{\mu}_{-t_0 \Delta+\mu_0 I_{L^d}}))>\frac{\varepsilon}{2}.
\end{align}
Note that $\mu_{\infty}(\hat{u}^*)=\tau_{-u^*}(\sigma_{0, 4\sqrt{B''}} \boxplus \hat{\mu}_{-t_0 \Delta+\mu_0 I_{L^d}}).$ 
Combining these facts with the Cauchy–Schwarz inequality, we deduce 
\begin{align}\label{enlars1}
   & \frac{1}{NL^d}\log \mathbf{S}_N^1\\ &\leq
\frac{1}
{NL^d}\log\int_{O_{\frac{\eps}{2}}(\hat{u}^*)} \frac{e^{-N \frac{\|u\|^2}{2 J^2}}}{\left(\sqrt{2 \pi J^2 / N}\right)^{L^d}} \nonumber\\&\quad\quad\quad\quad\quad\times\mathbb{E}\left[\left|\operatorname{det}\left(H_N(u)\right)\right|\indi_{d(\hat{\mu}_{H_N(\hat{u}^*)},\tau_{-u^*}(\sigma_{0, 4\sqrt{B''}} \boxplus \hat{\mu}_{-t_0 \Delta+\mu_0 I_{L^d}}))>\frac{\varepsilon}{2}}\right] \mathrm{d} u\nonumber\\ &\leq
\frac{1}
{NL^d}\log\int_{O_{\frac{\eps}{2}}(\hat{u}^*)} \frac{e^{-N \frac{\|u\|^2}{2 J^2}}}{\left(\sqrt{2 \pi J^2 / N}\right)^{L^d}} \mathbb{E}\left[\left|\operatorname{det}\left(H_N(u)\right)\right|^2\right]^{1/2}\nonumber\\&\quad\quad\quad\quad\quad\times\mathbb{P}\left(d(\hat{\mu}_{H_N(\hat{u}^*)},\mu_{\infty}(\hat{u}^*)>\frac{\varepsilon}{2}\right)^{1/2} \mathrm{d} u.\nonumber
\end{align}
Arguing as in {\cite[Lemma 4.4]{BABM21}}, there exists some constant $C>0$ such that 
\begin{align}
\label{edet}
\mathbb{E}\left[\left|\operatorname{det}\left(H_N(u)\right)\right|^2\right]^{1/2}\leq (C \max (\|u\|, 1))^N.
\end{align}
Using {\cite[Lemma 4.3]{BABM21}} and {\cite[Corollary 1.4 (b)]{GZ00}}, for any $\eps>0$, 
there exist some constants $C_1>0$ and $C_2>0$ (depending on $\eps$, $L$ and $d$) such that for $N$ large enough
\begin{align}
\label{pdist}
\mathbb{P}\left(d(\hat{\mu}_{H_N(\hat{u}^*)},\mu_{\infty}(\hat{u}^*))>\eps\right) \leq C_1\exp\{-C_2N^2\}.
\end{align}
Plugging the estimates (\ref{edet}) and (\ref{pdist}) into (\ref{enlars1}) directly gives
\begin{align}
  \limsup _{N \rightarrow \infty} \frac{1}{NL^d}\log \mathbf{S}_N^1=-\infty,
\end{align}
which completes the proof of 
(\ref{s1}) and thus the assertion (\ref{meau}). 

The second conclusion on the left edge is a direct consequence of the Pastur self-consistency equation \eqref{Stie} as already rigorously justified by Fyodorov and Le Doussal \cite{fyodorov2020manifolds}. We reproduce their argument briefly for the reader's convenience.
Indeed, taking the real and imaginary parts of the Pastur relation (\ref{Stie}) yields the following equations for any $z$ with $\Im z\ge0$
\begin{align}\label{ri}
{\Re}(m(z))&=\int \frac{s+u^*-z-4B''(0){\Re}(m(z))}{(s+u^*-z-4B''(0){\Re}(m(z)))^2+16B''(0)^2({\Im}m(z))^2}\hat{\mu}_{-t_0 \Delta+\mu_0 I_{L^d}}(\mathrm{d} s),\\  
\Im(m(z))&=\int \frac{4B''(0){\Im}(m(z))}{(s+u^*-z-4B''(0){\Re}(m(z)))^2+16B''(0)^2({\Im}m(z))^2}\hat{\mu}_{-t_0 \Delta+\mu_0 I_{L^d}}(\mathrm{d} s)\nonumber.
\end{align}
Writing $\ell=\ell(\tau_{-u^*}(\sigma_{0, 4\sqrt{B''}} \boxplus \hat{\mu}_{-t_0 \Delta+\mu_0 I_{L^d}}))$, we have ${\Im}(m(\ell))=0$. Replacing $z$ by $\ell$ in (\ref{ri}), we obtain
\begin{align}\label{mell}
{\Re}(m(\ell))&=\int \frac{\hat{\mu}_{-t_0 \Delta+\mu_0 I_{L^d}}(\mathrm{d} s)}{s+u^*-\ell-4B''(0){\Re}(m(\ell))},\\  
\frac{1}{4B''(0)}&=\int \frac{\hat{\mu}_{-t_0 \Delta+\mu_0 I_{L^d}}(\mathrm{d}s)}{(s+u^*-\ell-4B''(0){\Re}(m(\ell)))^2}\nonumber.
\end{align}
Combining the second equation in (\ref{mell}) and definition of the Larkin mass (\ref{larkin}) implies
\begin{align}\label{muc}
   \mu_c=\mu_0+u^*-\ell-4B''(0){\Re}(m(\ell)).
\end{align}
Together with the first equation in (\ref{mell}), we find
$${\Re}(m(\ell))=\int \frac{\hat{\mu}_{-t_0 \Delta}(\mathrm{d} s)}{s+\mu_c}.$$
Plugging the above value of ${\Re}(m(\ell))$ and the definition of $u^*$ as in (\ref{ustar}) into (\ref{muc}) yields \eqref{eq:emledge}.
\end{proof}

\begin{remark}
    At the time of this writing, we are unable to prove the convergence of the smallest eigenvalues of Hessians at the global minimum of elastic manifold. This is due to the lack of large deviation estimates for the smallest eigenvalues of block diagonal matrices in \eqref{WH}. Once such estimates become available, the argument leading to \eqref{xx8} and \eqref{zx} should also work here.
\end{remark}

\textbf{Acknowledgments.}
    We are in debt to Antonio Auffinger, who nicely forwarded a question of Fyodorov to us. This question was about a rigorous proof of the prediction made in \cite{FLD18} after the first version of the paper \cite{AZ20} was posted on arXiv. One year later, we eventually recalled Fyodorov's talk in the 2018 workshop on spin glasses at Banff, and started working on this problem. We would like to thank Robert J. Adler for providing us the reference \cite{KP90}, Zhigang Bao for helpful discussions, Yan Fyodorov for his interest on this paper and encouragement, and Benjamin McKenna for conversions on large deviations of extreme eigenvalues of random matrices. As we are finishing this revised version (elastic manifold is treated using the same strategy as before), we learned from Benjamin McKenna that he and collaborators have also obtained the convergence of the empirical spectral measures of Hessians at the global minimum of elastic manifold in the replica symmetric regime.

    We are grateful to the referees for their careful reading and many constructive suggestions which have significantly improved this paper.

    This work was partially supported by SRG 2020-00029-FST and FDCT 0132/2020/A3.

\bibliographystyle{abbrev}
\bibliography{gfic}
\end{document}